\newcommand{\diam}{\mbox{\rm diam}}\newcommand{\be}{\begin{eqnarray}}
\newcommand{\ee}{\end{eqnarray}}
\newcommand{\R}{{\mathbb R}}
\newcommand{\B}{{\mathcal B}}
\newcommand{\al}{\alpha}
\newcommand{\Dk}{{\mathcal D}}
\newcommand{\La}{{\Lambda}}
\newcommand{\om}{\omega}
\newcommand{\cz}{Calder\'on--Zygmund}
\newcommand{\vf}{\varphi}
\newcommand{\pd}{\partial}
\newcommand{\dist}{\operatorname{dist}}\newcommand{\ci}[1]{_{{}_{\scriptstyle{#1}}}}\newcommand{\supp}{\operatorname{supp}}\newtheorem{theorem}{Theorem}\newtheorem{lemma}[theorem]{Lemma}\newtheorem{prop}[theorem]{Proposition}\theoremstyle{definition}\newtheorem{defi}[theorem]{Definition}\theoremstyle{remark}\newtheorem{remark}[theorem]{Remark}\numberwithin{equation}{section}\input epsf.sty
\begin{document}\thispagestyle{empty}

\title[Bergman-type Singular Integral Operators]{{Bergman-type Singular Integral Operators and the characterization of Carleson measures for Besov--Sobolev spaces on the complex ball}}

\author[A. Volberg]{Alexander Volberg$^{\dagger}$}
\address{Alexander Volberg, Department of  Mathematics\\ Michigan State University\\ East Lansing, MI USA 48824}
\email{volberg@math.msu.edu}
\address{Alexander Volberg, Department of Mathematics\\ University of Edinburgh\\ James Clerk Maxwell Building\\ The King's Buildings\\ Mayfield Road\\ Edinburgh Scotland EH9 3JZ}
\email{a.volberg@ed.ac.uk}
\thanks{$\dagger$ Research supported in part by a National Science Foundation DMS grant.}
 
\author[B. D. Wick]{Brett D. Wick}
\address{Brett D. Wick, School of Mathematics\\ Georgia Institute of Technology\\ 686 Cherry Street\\ Atlanta, GA USA 30332-0160}
\email{wick@math.gatech.edu}

\begin{abstract} 
The purposes of this paper are two fold.  First, we extend the method of non-homogeneous harmonic analysis of Nazarov, Treil and Volberg to handle ``Bergman--type'' singular integral operators.  The canonical example of such an operator is the Beurling transform on the unit disc.  Second, we use the methods developed in this paper to settle the important open question about characterizing the Carleson measures for the Besov--Sobolev space of analytic functions $B^\sigma_2$ on the complex ball of $\mathbb{C}^d$.  In particular, we demonstrate that for any $\sigma> 0$, the Carleson measures for the space  are characterized by a ``T1 Condition''.  The method of proof of these results is an extension and another application of the work originated by Nazarov, Treil and the first author.
\end{abstract}

\maketitle

\section{Introduction and Statements of results}
\label{1}

We are interested in \cz\, operators that do not satisfy the standard estimates.  In particular, these kernels will live in $\mathbb{R}^d$ but will only satisfy estimates as if they live in $\mathbb{R}^m$ for $m\leq d$.    More precisely, we are interested in \cz\, operators whose kernels satisfy the following estimates
$$
|k(x,y)|\le\frac{C_{CZ}}{|x-y|^m}\,,
$$
and
$$
|k(y,x)-k(y,x')|+|k(x,y)-k(x',y)|\le C_{CZ}\frac{|x-x'|^\tau}{|x-y|^{m+\tau}}
$$
provided that $|x-x'|\le\tfrac12|x-y|$, with some (fixed) $0<\tau\le 1$ and $0<C_{CZ}< \infty$.  Once the kernel has been defined, then we say that a $L^2(\R^d;\mu)$ bounded operator is a Calder\'on--Zygmund operator with kernel $k$ if,
$$
T_{\mu} f(x)=\int_{\mathbb{R}^d}k(x,y)f(y)d\mu(y)\quad\forall x\notin\supp f\,.
$$

If $k$ is a nice function, then the integral above can be defined for all $x$ and then it gives one of the Calder\'on--Zygmund operators with kernel $k$.  It is for this reason, that when given a not so nice Calder\'on--Zygmund kernel people consider the sequence of ``cut-off" kernels and the {\it uniform boundedness} of this sequence.  In the applications that we have in mind for this paper, all our \cz\, operators can be considered a priori bounded (for example in future arguments, we can always think that $\mu$ is compactly supported inside the (complex) unit ball), and we will be interested in {\it the effective bound}, in terms of the parameters $C_{CZ}, \tau$ and in terms of a certain $T1$ condition we explain below.  Frequently, this is the better view point to adopt instead of the ``cut-off'' approach.

For kernels that satisfy these types of estimates and for virtually arbitrary underlying measures, a deep theory has been developed in \cites{NTV1, NTV2, NTV3, NTV4, V}.  The essential core of this theory showed that in this situation, one can develop the majority of Calder\'on--Zygmund theory and study the boundedness of the associated singular integral operators via a ``T1 Condition''.  This theory has found applications in Tolsa's solution of the Painlev\'e Problem, \cites{Tolsa1, Tolsa2, Tolsa}.  The theory of non-homogeneous harmonic analysis has also come to find applications in geometric measure theory in the works of Tolsa \cite{TolsaPV}, Tolsa, Ruiz de Villa \cite{TolsaVilla}, Mayboroda, Volberg \cites{MV1, MV2}, Eiderman, Nazarov, and Volberg \cite{ENV}.  The main results of this paper provide yet another application of the methods of non-homogeneous harmonic analysis.

Having in mind the application to Carleson measures in the complex unit ball, we wish to extend the theory of Nazarov, Treil and the first author \cites{NTV1, NTV2, NTV3, NTV4, V} to the case of singular integral operators that arise naturally as ``Bergman--type'' operators.  These will be operators that will satisfy the Calder\'on--Zygmund estimates from above, but we (again having in mind the above mentioned application, see further) additionally allow them to have the following  property
$$
|k(x,y)|\le \frac1{\max (d(x)^m, d(y)^m)}\,,
$$
where $d(x):=\dist (x, \R^d\setminus H)$ and $H$ being an open set in $\R^d$.  The examples that the reader should keep in mind are the \cz\, kernels built from the following function 
$$
k(x,y)=\frac{1}{\left(1-x\cdot y\right)^m}
$$
where $H=\mathbb{B}_d$, the unit ball in $\mathbb{R}^d$.  These are the standard ``Bergman--type'' kernels that arise naturally when looking at complex and harmonic analysis questions on the unit disc $\mathbb{D}$ and more generally in several complex variables.  When we have a kernel $k$ that satisfies the \cz\, estimates and  the additional property of measuring ``distance to some open set'' as above,  we will let
$$
T_{\mu,m}(f)(x)=\int_{\mathbb{R}^d}k(x,y)f(y)d\mu(y)\,.
$$

In applications, we will be viewing the Calder\'on--Zygmund kernels that arise as living on certain fixed sets.  Accordingly, we will say that $k$ is a Calder\'on--Zygmund kernel on a closed $X\subset \R^d$ if $k(x,y)$ is defined only on $X\times X$ and the previous properties of $k$ are satisfied whenever $x, x', y\in X$.

Our first main result is the following theorem, providing a link of the work of Nazarov, Treil and Volberg to the context of these Bergman--type operators.

\begin{theorem}[Main Result 1]
\label{md}
Let $k(x,y)$ be a \cz\, kernel of order $m\leq d$ on  $X\subset \R^d$, with \cz\,  constants $C_{CZ}$ and $\tau$. Let $\mu$ be a probability  measure with compact support in 
$X$ and suppose that all balls such that $\mu(B(x,r)) >r^m$ lie in an open set $H$. Let also
$$
|k(x,y)|\le \frac1{\max (d(x)^m, d(y)^m)}\,,
$$
where $d(x):=\dist (x, \R^d\setminus H)$.  Finally, suppose that for all cubes $Q$ a ``$T1$ Condition'' holds for the operator $T_{\mu,m}$ with kernel $k$ and for the operator $T^*_{\mu,m}$ with kernel $k(y,x)$:
\begin{equation}
\label{T1}
\|T_{\mu,m}\chi_Q\|_{L^2(\R^d;\mu)}^2 \le A\,\mu(Q)\,,\, \|T^*_{\mu,m}\chi_Q\|_{L^2(\R^d;\mu)}^2 \le A\,\mu(Q).
\end{equation}
Then $\|T_{\mu,m}\|_{L^2(\R^d;\mu)\rightarrow L^2(\R^d;\mu)} \le C(A,m,d,\tau)$.
\end{theorem}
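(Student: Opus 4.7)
\medskip

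The plan is to adapt the Nazarov--Treil--Volberg (NTV) non-homogeneous $T1$ machinery to this ``Bergman-type'' setting, treating the supplementary bound $|k(x,y)|\le 1/\max(d(x)^m,d(y)^m)$ as the mechanism that tames the places where $\mu$ fails to satisfy the growth condition $\mu(B(x,r))\le r^m$. By duality and an approximation step (truncation of the kernel, leaving the bounds $C_{CZ},\tau,A$ unchanged), we may assume $T_{\mu,m}$ is a priori bounded on $L^2(\mu)$ and only need the quantitative estimate. Fix a large cube containing $\supp\mu$ and choose a random dyadic lattice $\D=\D(\om)$ as in NTV, with a uniformly distributed shift parameter $\om$.

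The next step is to set up the Haar-type decomposition for the measure $\mu$: for every cube $Q\in\D$ with $\mu(Q)>0$ write $L^2(\mu;Q)=\bigoplus_{Q'\in \text{ch}(Q)}\ldots$ and construct an orthonormal basis $\{h_Q^j\}$ supported on $Q$ with mean zero. For a generic $f\in L^2(\mu)$ we have $f=\sum_Q\Delta_Q f$ where $\Delta_Q$ is the martingale difference. Following NTV we split cubes into good and bad according to whether they are far from the boundaries of ancestors of much larger size; with parameters $(r,\al)$ properly tuned the probability of being bad is small, so it suffices (by random averaging) to bound the bilinear form $\langle T_{\mu,m}f,g\rangle$ on pairs of good functions. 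We then decompose
\begin{equation*}
\langle T_{\mu,m}f,g\rangle=\sum_{Q,R}\langle T_{\mu,m}\Delta_Q f,\Delta_R g\rangle
\end{equation*}
and partition the sum according to the relative size and position of $Q$ and $R$: separated pairs, long-range contained pairs, and diagonal pairs, together with paraproduct terms where one of the two differences is replaced by its average.

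For the separated pairs and the diagonal ``comparable-size and nearby'' pairs, the standard $m$-dimensional CZ bounds together with goodness give the NTV-type geometric decay, which sums to $A+C_{CZ}$ times $\|f\|_2\|g\|_2$ exactly as in the homogeneous case; this part is mostly bookkeeping. The paraproduct pieces are controlled by the $T1$-hypothesis \eqref{T1}, rewritten as Carleson-type estimates on $\|T_{\mu,m}\chi_Q\|_{L^2(\mu)}$ and $\|T^*_{\mu,m}\chi_Q\|_{L^2(\mu)}$. The genuine obstacle, and the place where the Bergman-type hypothesis is essential, is the treatment of \emph{non-doubling} or ``terrible'' cubes $Q$ for which $\mu(\la Q)\gg\mu(Q)$. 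By the hypothesis of the theorem any such ``heavy'' cube (in the sense that its $\la$-enlargement violates the $r^m$-growth) must sit inside $H$, and for pairs $(Q,R)$ of this kind the kernel bound
\begin{equation*}
|k(x,y)|\le\frac{1}{\max(d(x)^m,d(y)^m)}
\end{equation*}
replaces the useless estimate $1/|x-y|^m$: since $Q\subset H$ we have $\ell(Q)\lesssim d(x)$ for $x\in Q$ when $Q$ itself is non-doubling (otherwise the ball $B(x,c\,\ell(Q))$ would violate the growth outside $H$), so $|k(x,y)|\lesssim\ell(Q)^{-m}$. This turns the long-range interaction of a terrible cube with another cube into a pointwise bound that, after summing, is majorized by $\sum_Q \mu(Q)\|\Delta_Q f\|_\infty\|\Delta_R g\|_\infty$ and hence by $\|f\|_2\|g\|_2$ via Cauchy--Schwarz and the goodness reduction.

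The main obstacle, then, is exactly this interaction between terrible cubes and the rest of the decomposition: one needs to verify, in the spirit of NTV and \cite{V}, that the Bergman bound furnishes the missing geometric decay when the measure is too large, while the $T1$ hypothesis absorbs the diagonal and paraproduct contributions. Once these pieces are in place, summing over all types of pairs and averaging over $\om$ produces the estimate $\|T_{\mu,m}\|_{L^2(\mu)\to L^2(\mu)}\le C(A,m,d,\tau)$, completing the proof.
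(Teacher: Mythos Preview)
Your overall architecture is right and matches the paper: random dyadic lattices, good/bad reduction, a three-part split into diagonal, separated, and nested pairs, with the paraproduct handled by the $T1$ hypothesis. But the one place where this theorem differs from standard NTV is precisely where your sketch goes soft, and there is a genuine gap.

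The issue is how the Bergman bound is wired into the argument. You propose to run the ordinary Haar decomposition and then, after the fact, isolate ``terrible'' cubes---which you describe as non-doubling, $\mu(\la Q)\gg\mu(Q)$---and invoke $|k(x,y)|\le d(x)^{-m}$ on them. But non-doubling is not the relevant notion: a cube can be badly non-doubling while both $Q$ and $\la Q$ satisfy $\mu\lesssim\ell^m$, and then there is no reason for $Q$ to sit deep in $H$, so the inequality $\ell(Q)\lesssim d(x)$ you want simply fails. Conversely, the growth condition $\mu(Q)\lesssim\ell(Q)^m$ is needed not just for a few exceptional cubes but throughout the long-range estimate (the Schur test behind the $\sigma_2$ matrix bound) and the nested estimate (the telescoping sum over ancestors in $\sigma_3$). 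Your post-hoc mechanism does not explain how to recover those estimates for \emph{all} cubes in the sums.

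The paper's device is to build the dichotomy into the Littlewood--Paley decomposition itself. A cube $Q$ is declared \emph{terminal} if $2Q\subset H$ (or $\mu(Q)=0$); otherwise it is \emph{transit}. Two elementary lemmas follow immediately: if $Q$ is transit then $2Q\not\subset H$ forces $\mu(Q)\le C(d)\,\ell(Q)^m$ (any larger mass would put a ball around the escaping point inside $H$), while if $Q$ is terminal then $d(x)\ge\ell(Q)$ on $Q$ and hence $|k(x,y)|\le\ell(Q)^{-m}$. The martingale differences $\Delta_Q$ are then defined \emph{only for transit} $Q$, and on a terminal child the difference is the full $f-\langle f\rangle_Q$ rather than a jump of averages. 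This stopping-time modification guarantees that every cube appearing in the long-range and paraproduct sums is transit and therefore obeys the $m$-growth bound, so the standard NTV estimates go through verbatim; the Bergman kernel bound is invoked exactly on the terminal children, where it gives a clean $\ell(R)^{-m}$ pointwise control that, combined with the disjointness of terminal cubes, closes the estimate. Without this structural modification of the decomposition, the argument does not assemble.
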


The balls for which we have $\mu(B(x,r))>r^m$ are called ``non-Ahlfors balls".  Non-Ahlfors balls are enemies, their presence make the estimate of \cz\, operators basically impossible (often the boundedness of a \cz\, operator implies that non-Ahlfors balls do not exist). The key hypothesis is that we can capture all the non-Ahlfors balls in some open set $H$.  To mitigate against this difficulty, we will have to suppose that our \cz\, kernels have an additional estimate in terms of the behavior of the distance to the complement of $H$ (namely that they are Bergman--type kernels).  

The method of proof of this theorem will be to use the tools of non-homogeneous harmonic analysis as developed by F. Nazarov, S. Treil, and the first author in the series of papers \cites{NTV1, NTV2, NTV3, NTV4} and further explained in the book by the first author \cite{V}.  The key innovation is the use of the Bergman--type kernels and the ability to control them in some appropriate fashion.

The proof of Theorem \ref{md} arose in an attempt to characterize the Carleson measures for the Besov--Sobolev spaces of analytic functions on the unit ball $\mathbb{B}_{2d}$ in $\mathbb{C}^d$.  Roughly speaking, the Besov--Sobolev space of analytic functions $B_2^\sigma(\mathbb{B}_{2d})$ is the collection of analytic functions on the unit ball such the derivative of order $\frac{d}{2}-\sigma$ belongs to the classical Hardy space $H^2(\mathbb{B}_{2d})$.  An important question in the study of these spaces is a characterization of the measures $\mu$ for which
$$
\int_{\mathbb{B}_{2d}}\vert f(z)\vert^2d\mu(z)\leq C(\mu)\Vert f\Vert_{B_2^\sigma(\mathbb{B}_{2d})}^2.
$$
These measures are typically called Carleson measures and the characterization that one seeks is in terms of ``natural'' geometric test.  These ideas are explained in more detail in Section \ref{Carleson}.

For the range $\frac{d}{2}\leq\sigma$, the characterization of the Carleson measures is a well-known simple geometric condition.  Roughly speaking, the $\mu$-measure of a ball should be comparable to the Lebesgue measure of the same ball to some appropriate power (depending on $\sigma$).  See for example \cite{Zhu} for the Hardy space and the Bergman space.  For the range $0\leq\sigma\leq\frac{1}{2}$ these Carleson measures in the complex ball were initially characterized by Arcozzi, Rochberg and Sawyer.  In \cite{ARS} Arcozzi, Rochberg and Sawyer developed the theory of ``trees'' on the unit ball and then demonstrated that the inequality they wished to prove was related to a certain two-weight inequality on these trees.  Once they have the characterization of the trees, they can then deduce the corresponding characterization for the space of analytic functions.  Again, in the range $0<\sigma\leq\frac{1}{2}$ a proof more in the spirit of what appears in this paper was obtained by E. Tchoundja, \cites{T1, T2}.  The characterization of Carleson measures via capacitary conditions can be found in work by Ahern and Cohn, \cite{AC}.

An important open question in the theory of Besov--Sobolev spaces was a characterization of the Carleson measures in the difficult range $\frac{1}{2}<\sigma<\frac{d}{2}$, see for example \cites{ARS2}.  It is \textit{important} to emphasize that key to both the approaches of Arcozzi, Rochberg, and Sawyer and that of Tchoundja was a certain ``positivity'' of a kernel.  The tools of non-homogeneous harmonic analysis developed by Nazarov, Treil and Volberg were specifically developed to overcome the difficulty when a natural singular integral operator lacks positivity.  It is not at all clear how (or if it is even possible) to adapt the methods of \cites{ARS, T1, T2} to address the more general case of $\frac{1}{2}<\sigma<\frac{d}{2}$. This leads to the second main result of this paper, and a new application of the theory of non-homogeneous harmonic analysis.  In $\mathbb{C}^d$, let $\mathbb{B}_{2d}$ denote the open unit ball and consider the kernels given by 
$$
K_m(z,w):= \textnormal{Re}\,\frac{1}{(1-\bar{z}\cdot w)^{m}},\quad\forall |z|\le 1, |w|\le 1\,. 
$$

\begin{theorem}[Characterization of Carleson Measures for Besov--Sobolev Spaces $B_2^\sigma(\mathbb{B}_{2d})$]
\label{Carl2}
Suppose that $0<\sigma$.  Let $\mu$ be a positive Borel measure in $\mathbb{B}_{2d}$.  Then the following conditions are equivalent:
\begin{itemize}
\item[(a)] $\mu$ is a $B_2^\sigma(\mathbb{B}_{2d})$-Carleson measure;
\item[(b)] $T_{\mu,2\sigma}:L^2(\mathbb{B}_{2d};\mu)\to L^2(\mathbb{B}_{2d};\mu)$ is bounded;
\item[(c)] There is a constant $C$ such that

\begin{itemize}
\item[(i)] $\|T_{\mu,2\sigma}\chi_Q\|_{L^2(\mathbb{B}_{2d};\mu)}^2 \le C\,\mu(Q)$ for all $\Delta$-cubes $Q$;
\item[(ii)] $\mu(B_{\Delta}(x,r))\leq C\,r^{2\sigma}$ for all balls $B_{\Delta}(x,r)$ that intersect $\mathbb{C}^{d}\setminus\mathbb{B}_{2d}$.
\end{itemize}
\end{itemize}
\end{theorem}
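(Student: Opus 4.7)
The plan is to show (a)$\Leftrightarrow$(b) by a standard reproducing-kernel argument, and (b)$\Leftrightarrow$(c) with the nontrivial direction delivered by Theorem \ref{md}. Throughout, the natural quasi-metric on $\overline{\mathbb{B}_{2d}}$ is the non-isotropic Koranyi distance $\Delta(z,w):=|1-\bar z\cdot w|^{1/2}$; all balls and cubes in (c) are $\Delta$-objects, and $(\overline{\mathbb{B}_{2d}},\Delta)$ is a space of homogeneous type in which $2\sigma$ is the natural Calder\'on--Zygmund order for $K_{2\sigma}$.

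For (a)$\Leftrightarrow$(b) I would invoke the reproducing-kernel thesis. With $C_\mu:B_2^\sigma(\mathbb{B}_{2d})\to L^2(\mathbb{B}_{2d};\mu)$ the restriction embedding, $\mu$ is Carleson iff $C_\mu$ is bounded iff $C_\mu C_\mu^*$ is bounded on $L^2(\mu)$. Computing $C_\mu^*$ via the reproducing kernel of $B_2^\sigma$, whose modulus is comparable to $|1-\bar z\cdot w|^{-2\sigma}$, and passing to the real part to obtain a genuine real symmetric integral operator identifies $C_\mu C_\mu^*$ with $T_{\mu,2\sigma}$; this reduction appears in Arcozzi--Rochberg--Sawyer and Tchoundja for $0<\sigma\leq 1/2$ and is formal in the exponent.

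For (b)$\Rightarrow$(c): (i) follows at once from testing against $\chi_Q\in L^2(\mu)$. For (ii), if $B_\Delta(x_0,r)$ meets $\mathbb{C}^d\setminus\mathbb{B}_{2d}$ then $1-|z|\lesssim r^2$ for every $z\in B_\Delta(x_0,r)$; consequently $\textnormal{Re}\,(1-\bar z\cdot w)^{-2\sigma}\gtrsim r^{-4\sigma}$ on a substantial portion of $B_\Delta(x_0,r)\times B_\Delta(x_0,r)$ (the argument of $1-\bar z\cdot w$ remains in a narrow sector around $0$), and testing the $L^2(\mu)$-boundedness of $T_{\mu,2\sigma}$ against $\chi_{B_\Delta(x_0,r)}$ forces $\mu(B_\Delta(x_0,r))^2\,r^{-4\sigma}\lesssim \|T_{\mu,2\sigma}\|^2\,\mu(B_\Delta(x_0,r))$, which yields (ii).

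The core is (c)$\Rightarrow$(b), obtained by applying Theorem \ref{md} in its natural form over $(\overline{\mathbb{B}_{2d}},\Delta)$ with $m=2\sigma$ and $H=\mathbb{B}_{2d}$. The hypotheses to verify are: $K_{2\sigma}$ is a $\Delta$-Calder\'on--Zygmund kernel of order $2\sigma$ with some H\"older exponent $\tau>0$, which reduces to direct estimates on $(1-\bar z\cdot w)^{-2\sigma}$ and its gradient; the Bergman bound $|K_{2\sigma}(z,w)|\leq\max(d(z),d(w))^{-2\sigma}$, immediate from $|1-\bar z\cdot w|\geq 1-|z|=d(z)$ and symmetrically in $w$; the capture of non-Ahlfors balls in $H$, i.e.\ $\mu(B_\Delta(x,r))>r^{2\sigma}\Rightarrow B_\Delta(x,r)\subset\mathbb{B}_{2d}$, which is the contrapositive of (c)(ii); and the $T1$ condition on $\Delta$-cubes, which is (c)(i) together with its dual, the latter automatic from the symmetry of $K_{2\sigma}$ in $(z,w)$. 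A standard truncation of $\mu$ to compact subsets of $\mathbb{B}_{2d}$ places us in the compactly-supported setting required by Theorem \ref{md}. The main obstacle I anticipate is not any of these individual verifications, which are largely mechanical, but rather the task of casting Theorem \ref{md} in the non-isotropic geometry of $(\overline{\mathbb{B}_{2d}},\Delta)$ while keeping straight that the CZ order $2\sigma$---and not the real dimension $2d$ of the ambient space---dictates the non-Ahlfors threshold; this mismatch between kernel order and ambient dimension is precisely the non-homogeneous phenomenon Theorem \ref{md} is built to handle.
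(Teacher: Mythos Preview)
Your overall strategy matches the paper's: (a)$\Leftrightarrow$(b) via the abstract reproducing-kernel duality (the paper's Proposition~\ref{Carl}), and (b)$\Leftrightarrow$(c) with the hard direction supplied by the Bergman-type $T1$ theorem applied with $H=\mathbb{B}_{2d}$. However, your choice of quasi-metric introduces a consistent exponent error. With $\Delta(z,w)=|1-\bar z\cdot w|^{1/2}$ one has $|K_{2\sigma}|\approx |1-\bar z\cdot w|^{-2\sigma}=\Delta(z,w)^{-4\sigma}$, so the Calder\'on--Zygmund order is $4\sigma$, not $2\sigma$; likewise $d(z)=\operatorname{dist}_\Delta(z,\partial\mathbb{B}_{2d})=(1-|z|)^{1/2}$, not $1-|z|$, so the Bergman bound reads $|K_{2\sigma}|\le d(z)^{-4\sigma}$; and the growth condition one actually derives in your own (b)$\Rightarrow$(c)(ii) step is $\mu(B_\Delta(x,r))\lesssim r^{4\sigma}$, not $r^{2\sigma}$. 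The paper avoids this by working with the quasi-metric $\Delta(z,w)=\bigl||z|-|w|\bigr|+\bigl|1-\frac{z}{|z|}\cdot\frac{\bar w}{|w|}\bigr|$ (no square root), for which the order is genuinely $2\sigma$, $d(z)=1-|z|$, and (c)(ii) is $\mu(B_\Delta)\lesssim r^{2\sigma}$ as stated. This is fixable by dropping the square root, but as written your exponents do not match the theorem you are proving.

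On the substantive point you flag at the end: the paper does \emph{not} apply Theorem~\ref{md} directly in the $\Delta$-geometry. It states and proves a separate metric version, Theorem~\ref{md1}, and the passage from Theorem~\ref{md} to Theorem~\ref{md1} is not formal: one must build anisotropic dyadic lattices adapted to $\Delta$ (subdividing different coordinate directions at different rates) and re-verify the probabilistic bad-cube estimate (Theorem~\ref{badprob}) in that geometry, which is carried out in Section~8. So the obstacle you anticipate is real, and its resolution is the content of Theorem~\ref{md1}, not a routine transcription of Theorem~\ref{md}.
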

Above, the sets $B_{\Delta}$ are balls measured with respect to a naturally occurring metric in the problem.  The operator $T_{\mu,2\sigma}$ is a Bergman-type Calder\'on--Zygmund operator with respect to this metric $\Delta$ for which we can apply (an extended version) of Theorem \ref{md}.  And, the set $Q$ is a ``cube'' defined with respect to the metric $\Delta$.  See Section \ref{s2} for the exact definitions of these objects.

We remark, that when $0\leq\sigma\leq\frac{1}{2}$ this result was previously obtained in \cites{ARS, T1, T2} and when $\frac{d}{2}\leq\sigma$ the characterization is now classical, see for example the references in \cite{Zhu}.  The key contribution of this theorem is the characterization of Carleson measures in the range $\frac{1}{2}<\sigma<\frac{d}{2}$, the so called ``difficult range''.

In this paper, constants will be denoted by $C$ throughout, and can change from line to line.  Frequently, the parameters that the constant depend upon will be indicated.  Also, we use the standard notation that $X\lesssim Y$ to mean that there is an absolute constant $C$ such that $X\leq CY$, and $X\approx Y$ to indicate that both $X\lesssim Y$ and $Y\lesssim X$.  

The authors wish to thank a skilled and thorough referee who pointed to areas where the exposition and clarity of the article could be improved.

\section{Applications of the Main Theorem:  Proof of Theorem \ref{Carl2}}
\label{s2}
We now show how one can use the main result of this paper, Theorem \ref{md} (and its metric version), to deduce several interesting corollaries.

\subsection{Necessary and Sufficient Conditions for ``Bergman--type'' Operators}

We now point out the situation that motivated Theorem \ref{md}.  In $\mathbb{C}^d$, let $\mathbb{B}_{2d}$ denote the open unit ball and consider the kernels given by 
$$
K_m(z,w):= \textnormal{Re}\,\frac{1}{(1-\bar{z}\cdot w)^{m}},\quad\forall |z|\le 1, |w|\le 1\,. 
$$

Let $\mu$ be a probability measure with compact support contained in the spherical layer $1/2 \le |z|<1$ and in particular the support is strictly inside the ball.  We will see that this kernel satisfies the hypotheses of Theorem \ref{md} when $H=\mathbb{B}_{2d}$, but with respect to a certain (non-euclidean) quasi-metric $\Delta$.  It will be obvious that we have the estimate
$$
|K_m(z,w)|\leq\frac{1}{\max (d(z)^m,d(w)^m)}\,.
$$
In application of the Main Theorem we need the quantity $d(z)$ which was defined with respect to the metric of interest.  We will see that for $z\in \mathbb{B}_{2d}$ this non-euclidean $d(z)$ will still be $1-|z|$: $d(z):=\textnormal{dist}_{\Delta}(z,\mathbb{C}^{d}\setminus\mathbb{B}_{2d})=1-|z|$.  Since if $z,w\in\mathbb{B}_{2d}$ we have $|1-z\cdot w|^{m}\geq (1-|z|)^{m}$ and a similar statement holding for $w$.

We introduce the above mentioned (quasi)-metric on the spherical layer around $\partial\mathbb{B}_{2d}$:
$$
\Delta(z,w):=
\left||z|-|w|\right|+\left|1-\frac{z}{|z|}\frac{w}{|w|}\right|\,,\,  1/2 \le |z|\le 2\,,\, 1/2\le |w|\le 2\,.
$$
Then it is easy to see that for all $z, w: |z|\le 1, |w|\le 1,$  we have
$$
|K_m(z,w)|\lesssim\frac{1}{\Delta(z,w)^{m}}.
$$
This holds because we know that
$$
\left| \frac{1}{(1-\bar{z}\cdot w)^m}\right|\lesssim\frac{1}{\Delta(z,w)^m}
$$
by \cites{B, T1, T2}.  Also in \cites{T1, T2} it is proved that if $\Delta(\zeta, w) <<\Delta(z, w)$ then
$$
|K_m(\zeta,w)-K_m(z,w)|\lesssim \frac{\Delta(\zeta, w)^{1/2}}{\Delta(z, w)^{m+1/2}}\,.
$$
This estimate then says that the kernel $K_m$ is a Calder\'on--Zygmund kernel with associated \cz\, parameter $\tau=1/2$ defined on the closed unit ball, but with respect to the quasi-metric $\Delta(z,w)$.  Finally, let $\mu$ be any probability measure compactly supported in the ball $\mathbb{B}_{2d}$.  

For the \cz\, kernel $K_m$ given above, we have the associated operator $T_{\mu,m}$ given by
$$
T_{\mu,m}(f)(x):=\int_{\mathbb{R}^{d}}K_m(x,y)f(y)d\mu(y)
$$
is a Calder\'on--Zygmund operator of order $m$ on the closed unit ball of $\mathbb{B}^d$ which satisfies the hypotheses of the Theorem \ref{md} on the kernel $K_m$ and measure $\mu$, however with respect to the quasi-metric $\Delta(x,y)$. If $\mu$ is compactly supported in the ball, the integral above converges absolutely. Otherwise we will consider the approximations of $\mu$ having supports compactly contained in the unit ball, and we will be interested in the uniform estimates of such operators. We must now argue that this  presents no problem in the application of Theorem \ref{md}.  

There is the following analog of Theorem \ref{md} pertinent to new metric corresponding to $\Delta$.

\begin{theorem}[Main Result 2]
\label{md1}
Let $k(z,w)$ be a \cz\, kernel of order $m\leq 2d$ on $X:=\{1/2\le |z|\le 2\}\subset \mathbb{C}^{d}$, with \cz\,  constants $C_{CZ}$ and $\tau$, but with respect to the metric $\Delta$ introduced above.  Let $\mu$ be a probability  measure with compact support in 
$X\cap \mathbb{B}_{2d}$, and suppose that all balls $B_{\Delta}$ in the metric $\Delta$ such that $\mu(B_{\Delta}(x,r)) >r^m$ lie in an open set $H$. Let also
$$
|k(z,w)|\le \frac1{\max (d(z)^m, d(w)^m)}\,,
$$
where $d(z):=\dist_{\Delta} (z, \mathbb{C}^{d}\setminus H)$.  Finally, suppose also that a ``$T1$ Condition'' holds for the operator $T$ with kernel $k$ and for the operator $T^*$ with kernel $k(w,z)$.  Namely, for all $\Delta$-cubes $Q$ we have:
\begin{equation}
\|T_{\mu,m}\chi_Q\|_{L^2(X;\mu)}^2 \le A\,\mu(Q)\,,\,\|T^*_{\mu,m}\chi_Q\|_{L^2(\R^d;\mu)}^2 \le A\,\mu(Q)\,.
\end{equation}
Then $\|T_{\mu,m}\|_{L^2(X;\mu)\rightarrow L^2(X:\mu)} \le C(A,m,d,\tau)$.
\end{theorem}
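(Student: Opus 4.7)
The plan is to reduce Theorem \ref{md1} to (a reproof of) Theorem \ref{md} by transporting the non-homogeneous $Tb$/$T1$ machinery of Nazarov--Treil--Volberg from Euclidean cubes to $\Delta$-cubes on the spherical layer $X=\{1/2\le|z|\le 2\}$. First I would verify that $(X,\Delta)$ is a geometrically doubling quasi-metric space: the quantities $\bigl||z|-|w|\bigr|$ and $\bigl|1-\tfrac{z}{|z|}\cdot\tfrac{w}{|w|}\bigr|$ are each controlled by standard metrics on $[1/2,2]$ and on the unit sphere $S^{2d-1}$, so $\Delta$-balls enjoy a doubling property with constant depending only on $d$. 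This lets me invoke a Christ-type construction of a nested system of ``dyadic'' $\Delta$-cubes $\Dk=\bigcup_k\Dk_k$ of generation $k$ with diameters $\approx 2^{-k}$, bounded overlap with $\Delta$-balls of comparable radius, and the usual small-boundary property $\mu\{x\in Q:\dist_\Delta(x,\partial Q)<t\,\ell(Q)\}\lesssim t^\eta\mu(Q)$ for some $\eta>0$.

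Next I would randomize the lattice exactly as in \cites{NTV1, NTV2, NTV3, NTV4, V}: choose translation/rotation parameters in the ambient sphere$\times$interval so that for any fixed point $x$ the relative position of $x$ inside the $\Delta$-cube of scale $2^{-k}$ containing it is essentially uniformly distributed, giving the probabilistic independence required for the good/bad decomposition. Define a cube $Q\in\Dk_k$ to be \emph{bad} if it lies close to the boundary of a much larger cube $Q'$ with $\ell(Q')\ge 2^r\ell(Q)$: specifically if $\dist_\Delta(Q,\partial Q')< \ell(Q)^\gamma\ell(Q')^{1-\gamma}$ for appropriate $\gamma=\gamma(\tau,m)\in(0,1)$. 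The small-boundary property for $\Delta$-cubes yields the key probabilistic estimate $\Pro(Q\text{ is bad})\le \delta(r)$ with $\delta(r)\to 0$, so bad parts can be absorbed; this is the standard NTV mechanism transplanted to the metric $\Delta$.

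I would then reduce matrix elements of $T_{\mu,m}$ in the martingale differences $\Delta_Q f$ adapted to the random $\Delta$-cube lattice to three families of sums: (i) long-range pairs $(Q,R)$ with comparable scales that are separated, handled by the Calder\'on--Zygmund estimates in the metric $\Delta$ (diagonal plus off-diagonal decay in $\ell(Q)/\Delta(Q,R)$); (ii) close-scale, close-position pairs treated by the $T1$ condition on good cubes; (iii) the pairs involving \emph{non-Ahlfors} cubes, i.e.\ cubes $Q\subset H$ with $\mu(Q)>\ell(Q)^m$. This last family is where the Bergman-type bound $|k(z,w)|\le 1/\max(d(z)^m,d(w)^m)$ enters: on $H$, for any pair of points we have $\Delta(z,w)\le \max(d(z),d(w))$ or the kernel bound trivializes, which allows a paraproduct-style telescoping (exactly as in the proof of Theorem \ref{md}) that trades the missing Ahlfors bound for the distance-to-$\partial H$ decay. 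Since $\mu$ is supported strictly inside $\mathbb{B}_{2d}$, the distances $d(z)=1-|z|$ are bounded below on $\supp\mu$, so the approximation argument in the excerpt (cutting off $\mu$ to $\{|z|\le 1-\eps\}$) gives uniform bounds that pass to the limit.

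The main obstacle, as usual in the NTV framework, is arranging the good/bad decomposition so that (a) bad cubes contribute negligibly after taking expectation over the random $\Delta$-lattice, and (b) the interaction of a good small cube $Q$ with a large cube $R$ in the presence of the non-Ahlfors set $H$ is still controllable. Step (a) requires the small-boundary property for $\Delta$-cubes together with the independence built into the randomization; step (b) requires combining the Calder\'on--Zygmund smoothness estimate for $K_m$ in the metric $\Delta$ (with exponent $\tau=1/2$, as recalled from \cites{B,T1,T2}) with the Bergman bound on $H$. Once these pieces are in place, the estimate $\|T_{\mu,m}\|_{L^2(X;\mu)\to L^2(X;\mu)}\le C(A,m,d,\tau)$ follows by summing the martingale decompositions and invoking the $T1$ hypothesis, completing the reduction to the same scheme that proves Theorem \ref{md}.
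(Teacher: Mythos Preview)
Your high-level plan---transport the NTV machinery from Euclidean cubes to $\Delta$-cubes---is exactly the paper's, but the route the paper takes is far more concrete than your Christ-cube approach. The paper observes that in local coordinates on the spherical layer the metric $\Delta$ is comparable to an explicit \emph{anisotropic} quasi-metric on $\R^{2d}$ (for $d=2$: $\lambda(x,y)=|x_1-y_1|+|x_2-y_2|+|x_3-y_3|^2+|x_4-y_4|^2$). One then builds $\Delta$-cubes as anisotropic parallelepipeds, subdividing the ``linear'' sides by $2$ and the ``quadratic'' sides by $4$ at each generation so that the $\lambda$-diameter halves; randomization is ordinary Euclidean translation of this lattice; and the bad-cube probability (the analogue of Theorem~\ref{badprob}) becomes a direct volume computation of thin slabs around the faces of such a parallelepiped. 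Once this is in place, the entire proof of Theorem~\ref{md} (the $\sigma_1,\sigma_2,\sigma_3$ splitting, the terminal/transit dichotomy, the paraproduct) carries over verbatim.

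There is a genuine gap in your version. The ``small-boundary property'' you invoke, $\mu\{x\in Q:\dist_\Delta(x,\partial Q)<t\,\ell(Q)\}\lesssim t^\eta\mu(Q)$, is \emph{false} for a general non-doubling $\mu$---that is the whole point of the non-homogeneous theory. What the NTV argument actually needs is a small-boundary estimate with respect to the \emph{probability measure on the random shift parameter}, not with respect to $\mu$; in the paper this is a Lebesgue-volume ratio computed by hand for the anisotropic boxes. Relatedly, randomizing Christ cubes is not part of Christ's original construction and requires separate machinery (\`a la Hyt\"onen--Martikainen) that you have not supplied. Finally, your decomposition (i)--(iii) does not match the paper's: the non-Ahlfors (terminal) cubes are not a separate sum but are threaded through all three pieces $\sigma_1,\sigma_2,\sigma_3$ via the terminal/transit dichotomy and Lemma~\ref{obv1}, and the long-range term $\sigma_2$ is not restricted to comparable scales.
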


Theorems \ref{md} and \ref{md1} will be proved in the last section. However, now we wish to provide a few remarks concerning the $\Delta$-cubes $Q$ appearing in Theorem \ref{md1}, and indicate why we refer to them as ``cubes''.

In the proof of Theorem \ref{md} and Theorem \ref{md1} the cubes considered arise from a naturally constructed dyadic lattice.  In the standard Euclidean case, Theorem \ref{md}, we simply will take the standard dyadic lattice.  However, in the proof of Theorem \ref{md1} we will need to transfer certain parallelepiped regions to a spherical neighborhood of the sphere.  This situation arises since, in a neighborhood of the sphere, the metric $\Delta$ will look like a variant of the standard Euclidean metric but with different powers appearing.  It would be geometrically ``nicer'' if these shapes were actual (non-Euclidean) balls. However, this difficulty can be overcome since one actually can replace these non-Euclidean cubes by non-Euclidean balls.  This change between cubes and balls is \textit{not} a total triviality because the operator $T_{\mu,m}$ does not have a positive kernel.   In the Euclidean setting, we refer the reader to \cite{NTV1} for this passage from cubes to balls, and remark that these changes can be adapted to handle the case of a non-Euclidean metric verbatim.  At the end of the paper when we prove Theorem \ref{md1} we provide a few more words about the structure of the ``cubes'' $Q$ that appear.  


The next result is a corollary of Theorem \ref{md1}. We apply this corollary to obtain the characterization of Carleson measures in the whole scale of our spaces of analytic functions in the ball. Consider our special kernels
$$
K_{m}(z, w) =\textnormal{Re}\, \frac{1}{(1-\bar{w}\cdot z)^{m}}\,.
$$
Consider a probability measure $\mu$ supported on $X\subset \mathbb{B}_{2d}$. A well-known necessary condition for  the space $B_2^{\sigma}(\mathbb{B}_{2d})$
to be imbedded in $L^2(\mathbb{B}_{2d};\mu)$ is
\begin{equation}
\label{nec}
\mu(B_{\Delta}(\zeta, r)) \le C_1\, r^{2\sigma}\,,\, \forall \zeta \in \partial \mathbb{B}_{2d}\,.
\end{equation}
This is easily seen by testing the embedding condition on the reproducing kernel for the space of functions.  However, \eqref{nec} can be rewritten in a form akin to the conditions on the measure in Theorem \ref{md1}. Namely, of course \eqref{nec} is equivalent to (with another constant)
\begin{equation}
\label{nec1}
\mu(B_{\Delta}(\zeta, r)) \le C_2\, r^{2\sigma}\,,\, \forall B_{\Delta}(\zeta, r)):\, B_{\Delta}(\zeta, r))\cap \mathbb{C}^{d}\setminus \mathbb{B}_{2d}\neq \emptyset\,.
\end{equation}

In turn, \eqref{nec1} can be rephrased as saying
\begin{equation}
\label{nec2}
\text{every metric ball such that}\,\mu(B_{\Delta}(\zeta, r)) > C_2\, r^{2\sigma}\,,\,\text{is contained in the unit ball}\,\mathbb{B}_{2d}\,.
\end{equation}

\begin{theorem}[Main Result 3]
\label{md2}
 Let $\mu$ be a probability  measure with compact support in 
$X\cap \{z\in \mathbb{C}^{d}: 1/2\le |z|<1\}$ and all balls $B_{\Delta}$ in the metric $\Delta$ such that $\mu(B_{\Delta}(x,r)) >r^m$ lie inside the ball $\mathbb{B}_{2d}$.   Finally, suppose that for all $\Delta$-cubes $Q$ a ``$T1$ Condition'' holds for the operator $T_{\mu,m}$ with kernel $K_m$:
\begin{equation}
\|T_{\mu,m}\chi_Q\|_{L^2(X;\mu)}^2 \le A\,\mu(Q)\, \textnormal{ for all }.
\end{equation}
Then $\|T_{\mu,m}\|_{L^2(X;\mu)\rightarrow L^2(X;\mu)} \le C(A,m,d,\tau)$.
\end{theorem}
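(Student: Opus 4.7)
The plan is to deduce Theorem \ref{md2} as a direct consequence of Theorem \ref{md1}, taking $H = \mathbb{B}_{2d}$ and the specific kernel $k(z,w) = K_m(z,w) = \textnormal{Re}\,(1-\bar{w}\cdot z)^{-m}$. What must be checked is that $K_m$ fits into the framework of Theorem \ref{md1} and that the single $T1$ assumption in Theorem \ref{md2} is enough to recover both halves of the $T1$ hypothesis in Theorem \ref{md1}.

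The key observation — the one that distinguishes the hypothesis of Theorem \ref{md2} from that of Theorem \ref{md1} — is that $K_m$ is symmetric. Indeed, $\overline{(1-\bar{w}\cdot z)^m} = (1-\bar{z}\cdot w)^m$, so taking the real part gives
\[
K_m(z,w) \;=\; \textnormal{Re}\,\frac{1}{(1-\bar{w}\cdot z)^m} \;=\; \textnormal{Re}\,\frac{1}{(1-\bar{z}\cdot w)^m} \;=\; K_m(w,z).
\]
Thus the adjoint operator $T^*_{\mu,m}$, whose kernel is $K_m(w,z)$, coincides with $T_{\mu,m}$ itself (it is real-valued and self-adjoint on $L^2(X;\mu)$). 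Consequently the assumed testing inequality $\|T_{\mu,m}\chi_Q\|_{L^2(X;\mu)}^2 \le A\mu(Q)$ automatically furnishes the corresponding bound for $T^*_{\mu,m}$ required by Theorem \ref{md1}.

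Next I would verify the remaining pointwise hypotheses. The Calder\'on--Zygmund estimates of $K_m$ with respect to the quasi-metric $\Delta$ — namely $|K_m(z,w)| \lesssim \Delta(z,w)^{-m}$ and the H\"older continuity with exponent $\tau = 1/2$ — are exactly the ones quoted in the excerpt from the work of Tchoundja. For the Bergman-type size estimate, set $d(z) = \textnormal{dist}_{\Delta}(z, \mathbb{C}^d \setminus \mathbb{B}_{2d}) = 1-|z|$ for $z \in \mathbb{B}_{2d}$. For any two points $z,w$ in the closed ball,
\[
|1-\bar{w}\cdot z| \;\ge\; 1 - |w||z| \;\ge\; \max(1-|z|,\,1-|w|) \;=\; \max(d(z),d(w)),
\]
so that $|K_m(z,w)| \le |1-\bar{w}\cdot z|^{-m} \le \max(d(z)^m, d(w)^m)^{-1}$, which is the bound required by Theorem \ref{md1}. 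The non-Ahlfors hypothesis is already built into the statement: all $\Delta$-balls with $\mu(B_\Delta(x,r)) > r^m$ are assumed to be contained in $H = \mathbb{B}_{2d}$.

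With these verifications in place, the conclusion is immediate from Theorem \ref{md1}. There is no substantial obstacle in the proof itself; the only point requiring a moment's thought is recognizing that the real-part symmetrization of the Cauchy-type kernel $(1-\bar{w}\cdot z)^{-m}$ — a feature designed into the definition of $K_m$ precisely for this purpose — makes $T_{\mu,m}$ self-adjoint and so renders a one-sided $T1$ test sufficient. Everything else is a direct appeal to Theorem \ref{md1} together with the CZ/Bergman estimates already recorded for $K_m$ in the preceding discussion.
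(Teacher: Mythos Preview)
Your proof is correct and follows essentially the same route as the paper: verify that $K_m$ satisfies the Calder\'on--Zygmund estimates (with $\tau=1/2$, citing Tchoundja) and the Bergman-type bound $|K_m(z,w)|\le \max(d(z)^m,d(w)^m)^{-1}$ with $d(z)=1-|z|$, then invoke Theorem~\ref{md1} with $H=\mathbb{B}_{2d}$. Your explicit observation that $K_m(z,w)=K_m(w,z)$ makes $T_{\mu,m}$ self-adjoint---so that the single $T1$ inequality in Theorem~\ref{md2} yields both inequalities required by Theorem~\ref{md1}---is a point the paper's proof leaves implicit, and your argument is the more complete for spelling it out.
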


\begin{proof}
The proof is just a particular case of Theorem \ref{md1}.   We already noted that the kernels $K_m$ satisfy the estimate
$$
|K_m(z,w)|\le \frac1{\max (d(z)^m, d(w)^m)}\,,
$$
where $d(z):=\dist_{\Delta} (z, \mathbb{C}^{d}\setminus \mathbb{B}_{2d})$. In fact it is obvious, because with respect to the metric $\Delta$ (and not only in Euclidean metric) $d(z) = 1-|z|$.

On the other hand in \cites{T1, T2} it is shown that $K_m(z, w)$ on $\mathbb{B}_{2d}$ is a Calder\'on--Zygmund operator with parameters $m, \tau=1/2$ with respect to the quasi-metric $\Delta$.  We remark that this is a non-trivial observation.  All together this implies that we are under the assumptions of Theorem \ref{md1}, and we are done.
\end{proof}


Theorem \ref{md2} is, as we already noted, exactly the following theorem giving a necessary and sufficient condition for the ``Bergman--type'' operators to be bounded on $L^2(\mu)$.

\begin{theorem}
\label{Bergman}
Let $\mu$ be a probability measure  supported in $\{z\in \mathbb{C}^{d}: 1/2\le |z|<1\}$.  Then the following assertions are equivalent:
\begin{itemize}
\item[(a)] The measure $\mu$ satisfies the following conditions:
\begin{itemize}
\item[(i)] $\mu(B_{\Delta}(x,r)) \le C_1 \, r^m\,,\,\,\forall B_{\Delta}(x,r): B_{\Delta}(x,r)\cap \mathbb{C}^{d}\setminus\mathbb{B}_{2d}\neq\emptyset;$
\item[(ii)] For all $\Delta$-cubes $Q$ we have $\| T_{\mu,m} \chi_Q\|_{L^2(X;\mu)}^2\leq C_2\mu(Q)$. 
\end{itemize} 
\item[(b)]
$$
\|T_{\mu,m}\|_{L^2(X;\mu)}\le C_3<\infty\,.
$$ 
\end{itemize}
Here $C_3=C(C_1, C_2, m, \tau)$, $C_1=C(C_3, m)$, $C_2=C(C_3,m)$.
\end{theorem}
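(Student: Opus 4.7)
The equivalence splits into two essentially routine implications and one substantive one. The implication $(a) \Rightarrow (b)$ is a direct application of Theorem \ref{md2} after rescaling: hypothesis (i) says precisely that any $\Delta$-ball with $\mu$-mass exceeding $C_1 r^m$ lies strictly inside $\mathbb{B}_{2d}$, so passing from $\mu$ to $\mu/(C_1+1)$ turns (i) into the hypothesis required by Theorem \ref{md2} while rescaling $L^2(\mu)$-norms in a controlled way, and (ii) is already the $T1$ testing hypothesis assumed there. The implication $(b) \Rightarrow (a)(ii)$ is immediate from $\|T_{\mu,m}\chi_Q\|_{L^2(\mu)}^2 \le \|T_{\mu,m}\|^2\,\mu(Q)$.

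The real content of the theorem is therefore extracting the geometric growth $(a)(i)$ from operator boundedness. My plan is a single-scale testing argument with an anchored test set. Fix a ball $B_\Delta(\zeta, r)$ meeting $\mathbb{C}^d\setminus\mathbb{B}_{2d}$, choose an anchor point $\zeta^* := (1-r)\zeta/|\zeta|$ lying at depth $\approx r$, and test on $A := B_\Delta(\zeta^*, c_0 r)$ for a small absolute constant $c_0 > 0$. On $A\times A$ the quantity $1 - \bar z\cdot w$ has modulus $\approx r$ and controlled argument, so one obtains the crucial pointwise lower bound $K_m(z,w) = \textnormal{Re}(1-\bar z\cdot w)^{-m} \gtrsim r^{-m}$. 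This gives $\textnormal{Re}\,T_{\mu,m}\chi_A(z) \gtrsim \mu(A)/r^m$ for every $z \in A$, and combining with $(b)$ yields
\begin{equation*}
\frac{\mu(A)^3}{r^{2m}} \;\lesssim\; \|T_{\mu,m}\chi_A\|_{L^2(\mu)}^2 \;\le\; \|T_{\mu,m}\|^2\,\mu(A),
\end{equation*}
whence $\mu(A) \lesssim r^m$ with constant depending only on $\|T_{\mu,m}\|$ and $m$.

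To upgrade this to a bound on $\mu(B_\Delta(\zeta, r))$, decompose the ball into dyadic depth layers $A_k := \{w \in B_\Delta(\zeta, r) : 1-|w| \approx 2^{-k}r\}$ for $k \ge 0$, cover each $A_k$ by $\lesssim 2^{k(m-1)}$ anchor neighborhoods at scale $2^{-k}r$ (the exponent reflecting the boundary dimension in the $\Delta$-metric), apply the previous estimate at each scale to get $\mu(A_k) \lesssim 2^{k(m-1)}\cdot (2^{-k}r)^m = 2^{-k} r^m$, and sum the resulting geometric series. The main obstacle I anticipate is securing the uniform lower bound $\textnormal{Re}(1-\bar z\cdot w)^{-m} \gtrsim r^{-m}$ on the anchor neighborhood: this is a Harnack-type estimate that requires $\arg(1-\bar z\cdot w)$ to stay small, and here the non-Euclidean shape of $\Delta$-balls intervenes and $c_0$ must be chosen quantitatively in terms of $m$. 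Once this geometric input is in place, the covering count and the dyadic summation are standard, and the constant dependencies announced in the theorem drop out of the calculation.
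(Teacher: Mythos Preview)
Your handling of $(a)\Rightarrow(b)$ via Theorem \ref{md2} (after rescaling) and of $(b)\Rightarrow(a)(ii)$ is correct and agrees with what the paper does: the paper presents Theorem \ref{Bergman} as essentially a restatement of Theorem \ref{md2}, and for the necessity of the growth condition it simply appeals to the literature (see the remark after the Carleson measure theorem citing \cites{ARS,T1,T2}).

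Your direct argument for $(b)\Rightarrow(a)(i)$, however, has a genuine gap in the covering step. The exponent $m-1$ you write for the number of anchor neighborhoods is not the boundary dimension in the $\Delta$-metric; $m$ is the kernel exponent and carries no geometric meaning. On $\partial\mathbb{B}_{2d}$ the Koranyi metric $\rho(\xi,\eta)=|1-\bar\xi\cdot\eta|$ has homogeneous dimension $d$: a $\rho$-ball of radius $\delta$ has surface measure $\approx\delta^{d}$. Your layer $A_k$ has radial thickness $\approx 2^{-k}r$ but tangential Koranyi extent $\approx r$, so covering it by anchor balls of $\Delta$-radius $c_0 2^{-k}r$ requires $\approx 2^{kd}$ of them, not $2^{k(m-1)}$. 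The layer estimate then becomes
\[
\mu(A_k)\;\lesssim\; 2^{kd}\,(2^{-k}r)^{m}\;=\;2^{k(d-m)}r^{m},
\]
and the sum over $k$ converges only when $m>d$, i.e.\ $\sigma>d/2$ --- exactly the classical range where the result is elementary anyway. In the range $m\le d$ that the paper cares about, the series diverges and the scheme collapses.

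The underlying obstruction is real: for $m>1$ the kernel $K_m=\textnormal{Re}(1-\bar z\cdot w)^{-m}$ changes sign on $B_\Delta(\zeta,r)\times B_\Delta(\zeta,r)$, so no single test on a set of comparable measure yields the needed lower bound, while shrinking to anchor sets on which the argument of $1-\bar z\cdot w$ is controlled costs precisely the factor $2^{kd}$ that destroys the summation. The route taken in the references goes through the Hilbert-space identity of Proposition \ref{Carl}: boundedness of $T_{\mu,m}$ is equivalent to the embedding $B_2^{m/2}\hookrightarrow L^2(\mu)$, and testing that embedding on the reproducing kernel $k_\lambda(z)=(1-\bar\lambda\cdot z)^{-m}$ with $\lambda=(1-r)\zeta$ uses $|k_\lambda|^2$ --- a positive quantity --- to give
\[
r^{-2m}\,\mu\bigl(B_\Delta(\zeta,r)\bigr)\;\lesssim\;\int|k_\lambda|^2\,d\mu\;\lesssim\;\|k_\lambda\|_{B_2^{m/2}}^2\;\approx\;r^{-m},
\]
uniformly in $m$. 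That is the missing idea.
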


We next apply this Theorem to study the Carleson measures for the Besov--Sobolev spaces of analytic functions on the unit ball in $\mathbb{C}^d$.

\subsection{Carleson Measures on Besov--Sobolev Spaces of Analytic Functions}
\label{Carleson}

In this section we give a characterization of Carleson measures for the Besov--Sobolev spaces $B_\sigma^2(\mathbb{B}_{2d})$.  

Recall that the space $B_\sigma^2(\mathbb{B}_{2d})$ is the collection of analytic functions on the unit ball $\mathbb{B}_{2d}$ in $\mathbb{C}^d$ and such that for any integer $m\geq 0$ and any $0\leq\sigma<\infty$ such that $m+\sigma>\frac{d}{2}$ we have the following norm being finite:
$$
\|f\|_{B^\sigma_2}^2:=\sum_{j=0}^{m-1}|f^{(j)}(0)|^2+\int_{\mathbb{B}_{2d}}|(1-|z|^2)^{m+\sigma}f^{(m)}(z)|^2\frac{d\,V(z)}{(1-|z|^2)^{d+1}}.
$$

One can show that these spaces are independent of $m$ and are Hilbert spaces, with obvious inner products.  The spaces $B_2^\sigma(\mathbb{B}_{2d})$ are reproducing kernel Hilbert spaces with kernels given by $k_\lambda^\sigma(z)=\frac{1}{\left(1-\overline{\lambda}\cdot z\right)^{2\sigma}}$.  A minor modification has to be made when $\sigma=0$, but this introduces a logarithmic reproducing kernel.

A non-negative measure $\mu$ supported inside $\mathbb{B}_{2d}$ is called a $B_\sigma^2(\mathbb{B}_{2d})$-Carleson measure if
$$
\int_{\mathbb{B}_{2d}}|f(z)|^2d\mu(z)\leq C(\mu)^2\|f\|_{B_\sigma^2(\mathbb{B}_{2d})}^2\quad\forall f\in B_\sigma^2(\mathbb{B}_{2d}).
$$
This is a function theoretic property and is looking for the measures $\mu$ that ensure the continuous embedding of $B_\sigma^2(\mathbb{B}_{2d})\subset L^2(\mathbb{B}_{2d};\mu)$.  

The norm of the Carleson measure $\mu$ is given by the best constant possible in the above embedding.  There are also geometric ways that one can characterize the $B_2^\sigma(\mathbb{B}_{2d})$-Carleson measures.  These characterizations are typically given in terms of the ``capacity'' associated to the function space $B_2^\sigma(\mathbb{B}_{2d})$ and an interaction between the geometry of certain sets arising from the reproducing kernel $k_\lambda^\sigma(z)$.  See for example \cites{CO}.  However, these characterizations had the restriction of only working in the range $0\leq\sigma\leq\frac{1}{2}$, and when $\frac{d}{2}\leq\sigma$.  Namely, previous methods were unable to answer the question in the difficult range of $\frac{1}{2}<\sigma<\frac{d}{2}$.  However, using the difficult methods of non-homogeneous harmonic analysis, we can give a characterization of the $B_2^\sigma(\mathbb{B}_{2d})$ using the Main Theorem in the form of Theorem \ref{Bergman} (or, equivalently, Theorem \ref{md2}) for all values of $\sigma$ at once.

\medskip

\begin{remark}
We can always assume that the support of the measure is {\it compactly} contained in the ball $\mathbb{B}_{2d}$. In fact, in this case the embedding is always continuous, but we are interested in the {\it norm} of this embedding, and its effective estimate. This assumption allows us to not worry about the boundedness per se, which is pleasant when working with Calder\'on--Zygmund kernels and excludes questions of absolute convergence of integrals. This also explains why we needed Theorems \ref{md2} and \ref{Bergman} only for compactly supported measures.
\end{remark}

\medskip

We begin by recalling the following fact found in the paper by N. Arcozzi, R. Rochberg, and E. Sawyer, \cite{ARS}.  First, some notation since the following proposition holds in an arbitrary Hilbert space with a reproducing kernel.  Let $\mathcal{J}$ be a Hilbert space of functions on a domain $X$ with reproducing kernel function $j_x$.  In this context, a measure $\mu$ is Carleson exactly if the inclusion map $\iota$ from $\mathcal{J}$ to $L^2(X;\mu)$ is bounded.

\begin{prop}
\label{Carl}
A measure $\mu$ is a $\mathcal{J}$-Carleson measure if and only if the linear map
$$
f(z)\to T(f)(z)=\int_{X}\textnormal{Re}\,j_x(z)f(x)d\mu(x)
$$
is bounded on $L^2(X;\mu)$.
\end{prop}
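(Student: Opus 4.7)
The plan is to identify $T$ with the real part of the Gram operator $\iota\iota^*$, where $\iota:\mathcal{J}\hookrightarrow L^2(X;\mu)$ is the inclusion map, so that $\mu$ being Carleson (by definition, boundedness of $\iota$) becomes equivalent to boundedness of $T$.

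First I would compute $\iota^*:L^2(X;\mu)\to\mathcal{J}$ via the reproducing property. For $f\in L^2(X;\mu)$ and $x\in X$,
\begin{equation*}
(\iota^* f)(x)=\langle \iota^* f,j_x\rangle_{\mathcal{J}}=\langle f,\iota j_x\rangle_{L^2(X;\mu)}=\int_X f(z)\,\overline{j_x(z)}\,d\mu(z)=\int_X j_z(x)\,f(z)\,d\mu(z),
\end{equation*}
the last step using the Hermitian symmetry $\overline{j_x(z)}=j_z(x)$ of the reproducing kernel. Hence the Gram operator has the integral representation $(\iota\iota^* f)(z)=\int_X j_w(z)\,f(w)\,d\mu(w)$, and for real-valued $f$ one reads off the key identity $Tf=\mathrm{Re}(\iota\iota^* f)$.

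If $\mu$ is Carleson, then $\iota\iota^*$ is bounded on $L^2(X;\mu)$ with operator norm $\|\iota\|^2$, and the pointwise inequality $|Tf|\le|\iota\iota^* f|$ gives $\|T\|\le\|\iota\|^2$. For the converse, I would exploit the identity, valid for real $f$,
\begin{equation*}
\|\iota^* f\|_{\mathcal{J}}^2=\langle\iota\iota^* f,f\rangle_{L^2(X;\mu)}=\mathrm{Re}\,\langle\iota\iota^* f,f\rangle_{L^2(X;\mu)}=\langle Tf,f\rangle_{L^2(X;\mu)}\le\|T\|\,\|f\|_{L^2(X;\mu)}^2,
\end{equation*}
the middle equality being legal because the left-hand side is already real and non-negative. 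This controls $\iota^*$ on real-valued functions; for complex $f=f_1+if_2$, the identity $\|\iota^* f\|_{\mathcal{J}}^2=\|\iota^* f_1\|_{\mathcal{J}}^2+\|\iota^* f_2\|_{\mathcal{J}}^2+2\,\mathrm{Im}\langle\iota^* f_1,\iota^* f_2\rangle_{\mathcal{J}}$ combined with Cauchy--Schwarz on the cross term yields $\|\iota^* f\|_{\mathcal{J}}^2\le 2\|T\|\,\|f\|_{L^2(X;\mu)}^2$, so $\iota^*$ (and thus $\iota$) is bounded and $\mu$ is Carleson.

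The only subtlety, modest as it is, lies in the fact that $T$ uses merely the \emph{real part} of the reproducing kernel, so one cannot simply read off $T=\iota\iota^*$. The mediating observation $\overline{j_x(z)}=j_z(x)$ is what forces the quadratic form $\langle Tf,f\rangle_{L^2(X;\mu)}$ to coincide with the manifestly non-negative quantity $\|\iota^* f\|_{\mathcal{J}}^2$ on the real subspace, and this positivity is precisely the ingredient allowing the transfer of boundedness in both directions.
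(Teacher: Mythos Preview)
Your proof is correct and follows essentially the same approach as the paper: both compute $\iota^*$ via the reproducing property, reduce to real-valued $f$, and exploit the identity $\|\iota^* f\|_{\mathcal{J}}^2=\langle Tf,f\rangle_{L^2(X;\mu)}$. The only cosmetic difference is that for the forward direction you use the pointwise bound $|Tf|\le|\iota\iota^* f|$, whereas the paper handles both directions through the quadratic form (implicitly using that $T$, having a real symmetric kernel, is self-adjoint so that a quadratic-form bound is equivalent to an operator-norm bound); your treatment of the real-to-complex extension is also more explicit than the paper's.
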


\begin{proof}
The inclusion map $\iota$ is bounded from $\mathcal{J}$ to $L^2(X;\mu)$ if and only if the adjoint map $\iota^*$ is bounded from $L^2(X;\mu)$ to $\mathcal{J}$, namely,
$$
\|\iota^* f\|^2_{\mathcal{J}}=\langle \iota^* f,\iota^*f\rangle_{\mathcal{J}}\leq C\|f\|_{L^2(X;\mu)}^2, \quad\forall f\in L^2(X;\mu).
$$
For an $x\in X$ we have 
\begin{eqnarray*}
\iota^* f(x)=( \iota^*f, j_x)_\mathcal{J} & = & ( f, \iota j_x)_{L^2(X;\mu)}\\
 & = & \int_X f(w)\overline{j_x(w)}d\mu(w)\\
 & = & \int_{X}f(w) j_w(x)d\mu(w)\\
\end{eqnarray*}
and using this computation, we obtain that
\begin{eqnarray*}
\|\iota^* f\|^2_\mathcal{J} & = & (\iota^* f,\iota^*f)_{\mathcal{J}}\\
 & = & \left(\int_X j_wf(w)d\mu(w), \int_X j_{w'}f(w')d\mu(w') \right)_{\mathcal{J}}\\
 & = & \int_X\int_X ( j_w, j_{w'})_{\mathcal{J}}f(w)d\mu(w)\overline{f(w')}d\mu(w')\\
 & = & \int_X\int_X j_w(w')f(w)d\mu(w)\overline{f(w')}d\mu(w').
\end{eqnarray*}
Since we know that the continuity of $\iota^*$ for general $f$ is equivalent to having it for real $f$, without loss of generality, we can suppose that $f$ is real.  In that case, we can continue the estimates with
$$
\|\iota^* f\|^2_{\mathcal{J}}=\int_X\int_X \textnormal{Re}\, j_w(w') f(w)f(w')d\mu(w)d\mu(w')=(Tf, f)_{L^2(X;\mu)}.
$$ 
But, the last quantity satisfies the required estimates exactly when the operator $T$ is bounded.
\end{proof}

When we apply this proposition to the spaces $B_2^\sigma(\mathbb{B}_{2d})$ this suggests that we study the operator
$$
T_{\mu,2\sigma}(f)(z)=\int_{\mathbb{B}_{2d}} \textnormal{Re}\left(\frac{1}{(1-\overline{w}z)^{2\sigma}}\right) f(w)d\mu(w).
$$
And, it is here that the tools of non-homogeneous harmonic analysis will play a role.  Using Theorem \ref{Bergman}, we have the following characterization of Carleson measures for $B_2^\sigma(\mathbb{B}_{2d})$.  The statement below is simply a restatement of Theorem \ref{Carl2}.

\begin{theorem}
Suppose that $0<\sigma$.  Let $\mu$ be a positive Borel measure in $\mathbb{B}_{2d}$.  Then the following conditions are equivalent:
\begin{itemize}
\item[(a)] $\mu$ is a $B_2^\sigma(\mathbb{B}_{2d})$-Carleson measure;
\item[(b)] $T_{\mu,2\sigma}:L^2(\mathbb{B}_{2d};\mu)\to L^2(\mathbb{B}_{2d};\mu)$ is bounded;
\item[(c)] There is a constant $C$ such that

\begin{itemize}
\item[(i)] $\|T_{\mu,2\sigma}\chi_Q\|_{L^2(\mu)}^2 \le C\,\mu(Q)$ for all $\Delta$-cubes $Q$;
\item[(ii)] $\mu(B_{\Delta}(x,r))\leq C\,r^{2\sigma}$ for all balls $B_{\Delta}(x,r)$ that intersect $\mathbb{C}^{d}\setminus\mathbb{B}_{2d}$.
\end{itemize}
\end{itemize}
\end{theorem}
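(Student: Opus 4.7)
The plan is to derive the theorem as a bookkeeping corollary of the two main tools established above: Proposition \ref{Carl}, which converts the Carleson-embedding question into an operator-boundedness question, and Theorem \ref{Bergman}, which characterizes $L^2(\mu)$-boundedness of Bergman-type operators via a $T1$ test plus control on where the non-Ahlfors balls can live. Applying Proposition \ref{Carl} to the reproducing-kernel Hilbert space $B_2^\sigma(\mathbb{B}_{2d})$ with kernel $k_\lambda^\sigma(z)=1/(1-\bar\lambda\cdot z)^{2\sigma}$, the operator appearing there has kernel $\textnormal{Re}\,k_w^\sigma(z)=K_{2\sigma}(z,w)$, so (a) $\Leftrightarrow$ (b) is immediate.

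For (b) $\Rightarrow$ (c), item (c)(i) is just boundedness of $T_{\mu,2\sigma}$ tested on $\chi_Q$. Item (c)(ii) I would obtain in the classical way: given a ball $B_{\Delta}(z_0,r)$ meeting $\mathbb{C}^{d}\setminus\mathbb{B}_{2d}$, I shift $z_0$ slightly inward along its radius until $1-|z_0|\approx r$, and test the Carleson embedding on the normalized reproducing kernel $k_{z_0}^\sigma/\|k_{z_0}^\sigma\|_{B_2^\sigma}$. Since $\|k_{z_0}^\sigma\|_{B_2^\sigma}^2=k_{z_0}^\sigma(z_0)\approx r^{-2\sigma}$ while $|k_{z_0}^\sigma(w)|\gtrsim r^{-2\sigma}$ uniformly on $B_{\Delta}(z_0,r)$, the embedding forces $\mu(B_{\Delta}(z_0,r))\lesssim r^{2\sigma}$.

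The substantive direction is (c) $\Rightarrow$ (b). Here I would feed (c)(i) and (c)(ii) directly into Theorem \ref{Bergman} with $m=2\sigma$. Using the remark that it suffices to prove uniform bounds for measures compactly supported in the spherical layer $\{1/2\le |z|<1\}$ and then pass to the limit, I would assemble the required inputs: the kernel $K_{2\sigma}$ is a $\Delta$-Calder\'on--Zygmund kernel of order $2\sigma$ with H\"older exponent $\tau=1/2$, by the estimates in \cites{T1, T2}; it satisfies the Bergman bound $|K_{2\sigma}(z,w)|\le 1/\max(d(z)^{2\sigma},d(w)^{2\sigma})$ with $d(z)=\dist_\Delta(z,\mathbb{C}^{d}\setminus\mathbb{B}_{2d})=1-|z|$; condition (c)(ii) is exactly the rephrasing \eqref{nec2} of hypothesis (a)(i) of Theorem \ref{Bergman}, namely that every $\Delta$-ball of $\mu$-mass exceeding $r^{2\sigma}$ lies inside $\mathbb{B}_{2d}$; and (c)(i) is hypothesis (a)(ii) of Theorem \ref{Bergman}. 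Theorem \ref{Bergman} then delivers the $L^2(\mu)$-bound on $T_{\mu,2\sigma}$ with a constant depending only on the constants in (c) and on $\sigma$ and $d$, closing the loop with the $T1$ direction going through Proposition \ref{Carl}.

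The main obstacle is invisible at this level: it is embedded in Theorem \ref{Bergman} itself, and ultimately in Theorems \ref{md} and \ref{md1}, where the non-homogeneous Calder\'on--Zygmund machinery of Nazarov--Treil--Volberg has to be pushed through for a genuinely non-positive kernel against a typically non-doubling measure $\mu$; the delicate new ingredient is that all the non-Ahlfors balls are absorbed into the open set $H=\mathbb{B}_{2d}$, and the Bergman-type bound in terms of $d(z)^{-2\sigma}$ must be exploited to control contributions from deep inside $H$. Once that input is granted, the present theorem is essentially the composition of Proposition \ref{Carl} with Theorem \ref{Bergman}, and the proof reduces to the verification of hypotheses sketched above.
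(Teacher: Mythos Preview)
Your proposal is correct and follows essentially the same route as the paper: the equivalence $(a)\Leftrightarrow(b)$ is exactly Proposition~\ref{Carl} specialized to $B_2^\sigma(\mathbb{B}_{2d})$, and the equivalence $(b)\Leftrightarrow(c)$ is Theorem~\ref{Bergman} together with the standard reproducing-kernel test for the necessity of $(c)(ii)$. The paper's own proof is stated in two sentences citing precisely these ingredients; your sketch simply unpacks them with a bit more detail, including the correct observation that the real work is buried in Theorems~\ref{md} and~\ref{md1}.
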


The equivalence between $(a)$ and $(b)$ follows simply from the abstract Proposition \ref{Carl}.  The equivalence between $(b)$ and $(c)$ follows from Theorem \ref{Bergman} and a well-known remark that (ii) is necessary for (b) to hold, see e.g. \cites{ARS, T1, T2}.


We remark that in the range $0\leq\sigma\leq\frac{1}{2}$ Theorem \ref{Carl2} was known.  It was proved by N. Arcozzi, R. Rochberg, and E. Sawyer using the machinery of function spaces on trees, see \cite{ARS}.  It was also proved, more in the spirit of what appears in this paper,  by E. Tchoundja, see \cites{T1, T2}.  He adapted the proof of J. Verdera of the boundedness of the Cauchy operator on $L^2(\mu)$, \cite{Verdera}.  Key to both these approaches was the fact that when $0\leq\sigma\leq\frac{1}{2}$ a certain ``positivity'' of the kernel could be exploited.  This positivity is cruelly missing when $\sigma>\frac{1}{2}$, but can be overcome via the ``$T1$ Conditions''.  We also remark that the range $\frac{1}{2}<\sigma<\frac{d}{2}$, was previously unknown, and this theorem then answers the question in the whole range of possible $\sigma$.

\subsection{Theorem \ref{Bergman} and the Standard Carleson Measure Conditions}
\label{standard}

In this sub-section, we show that the conditions in Theorem \ref{Bergman} reduce to the well known geometric conditions for Carleson measures on the Hardy space and the Bergman space.  Setting $\sigma=\frac{n}{2}$ one can show that the space $B_2^{\frac{n}{2}}(\mathbb{B}_{2n})=H^2(\mathbb{B}_{2n})$, the Hardy space of analytic functions on the unit ball $\mathbb{B}_n$.  Similarly, if we set $\sigma=\frac{n+1}{2}$, the we see that $B_2^{\frac{n+1}{2}}(\mathbb{B}_n)=A^2(\B_n)$, the space of square integrable functions on $\mathbb{B}_n$ with respect to Lebesgue measure.

We further focus on the case in one dimension, $d=1$, since the ideas are easier to see, though they readily can be adapted to several variables and could be applied to demonstrate that the known geometric conditions for Carleson measures in the Hardy space of the ball and Bergman space of the ball are equivalent to a the more exotic T1 condition appearing in Theorem \ref{Bergman}.  However, when the geometric condition is more difficult to understand, such as when it is characterized via capacity conditions, then it is not immediately clear that the geometric and the T1 conditions are in fact equivalent without passing through the equivalence of the main theorem. 

Finally, we will work in the case of the upper half plane $\mathbb{C}_+$.  The main theorems apply in this context, either via conformal mappings and changes of variables.  Key to these computations is the M\"obius invariance of the function spaces $B_2^{\sigma}(\mathbb{B}_{2d})$.  Or, one can set-up the entire problem in the context of the upper-half plane, then simply apply the method of proof of the main theorems to obtain the geometric characterization in this context.

\subsubsection{Carleson Measures on the Hardy Space}

A measure $\mu$ is a Carleson measure for $H^2(\mathbb{C}_+)$ (or $H^2(\mathbb{D})$) if 
$$
\int_{\mathbb{C}_+}\vert f(z)\vert ^2d\mu(z)\leq C(\mu)^2\| f\|_{H^2(\mathbb{C}_+)}^2\quad \forall f\in H^2(\mathbb{C}_+).
$$ 

As is well-known this function-theoretic condition happens if and only if the follow geometric condition is satisfied for all tents $T(I)$ over $I\subset\mathbb{R}$:
$$
\mu\left(T(I)\right)\leq C \vert I\vert \quad\forall I\subset\mathbb{R}.
$$

First note that if $I\subset\R$ is an interval, then $T(I)$ will be a cube in $\R^2$.  If we know that Condition (ii)  of Theorem \ref{Bergman}  (in the case when $d=1$ and $\sigma=\frac{1}{2}$) \ref{Bergman} holds, then restricting the outer integral in that condition to $T(I)$ and using standard estimates for the kernel one can see that
$$
\frac{\mu(T(I))^3}{\vert I\vert^2}\leq C(\mu)\mu(T(I))
$$
which upon rearrangement, yields, the geometric Carleson condition.

Next, we want to show that if $\mu$ satisfies the geometric Carleson condition, then in fact we have Condition (ii)  of Theorem \ref{Bergman}  holding as well. In other words, we want to see easily why the usual geometric Carleson condition on $\mu$ implies the boundedness of our operator on characteristic functions of squares. 

We first observe that the function
$$
F_{Q,\mu}(z):=\int_{\R^2} \frac{\chi_Q(\xi)}{\xi-z}d\mu(\xi)
$$
belongs to $H^2(\mathbb{C}_-)$ (since $\mu$ is supported on the upper-half plane) with norm a constant multiple of $\sqrt{\mu(Q)}$.  This is a well known fact and can be found for example in \cite{NTV4}, however, we provide the proof now for convenience.  Let $\varphi$ be a test function on $\mathbb{R}$, then we have that 
$$
\Phi(z):=\int_{\mathbb{R}}\frac{\varphi(x)}{x-z}dx
$$
is analytic and belongs to the Hardy class $H^2(\mathbb{C}_-)$.  Next, note that, 
\begin{eqnarray*}
\int_\mathbb{R} \varphi(x) F_{Q,\mu}(x)dx & = & \int_{Q} \Phi(\xi) d\mu(\xi).
\end{eqnarray*}

Using H\"older's inequality and using that $\mu$ is a Carleson measure, and so the function theoretic embedding also holds, we see that for all test functions we have
\begin{eqnarray*}
\left\vert \int_\mathbb{R} \varphi(x) F_{Q,\mu}(x)dx\right\vert  & = & \left\vert \int_{Q} \Phi(\xi) d\mu(\xi)\right\vert \\
& \leq &   \sqrt{\mu(Q)}\left(\int_{\mathbb{C}_-}\vert\Phi(\xi)\vert^2 d\mu(\xi)\right)^{1/2}\\
& \leq & \| \varphi\|_{L^2(\R^d;\mu)}\sqrt{\mu(Q)}.
\end{eqnarray*}

Taking the supremum over all possible test functions proves the claim.  However, since the function $F_{Q,\mu}$ belongs to $H^2(\mathbb{C}_-)$, then another application of the Carleson Embedding property (or equivalently, the geometric condition for Carleson measures) gives that
$$
\| T_{\mu,\frac{1}{2}} \chi_Q\|_{L^2(\mu)}^2 = \int_{\mathbb{C}_-}\vert F_{Q,\mu}(z)\vert^2 d\mu(z)\leq C(\mu) \mu(Q).
$$

Thus, we see that in this case, we have the equivalence between the standard Carleson Embedding Condition and Condition (ii)  of Theorem \ref{Bergman}.

\subsubsection{Carleson Measures on the Bergman Space}

A measure $\mu$ is a Carleson measure for the Bergman space $A^2(\mathbb{C}_+)$ if we have that
$$
\int_{\mathbb{C}_+}|f(z)|^2d\mu(z)\leq C(\mu)^2\| f\|_{A^2(\mathbb{C}_+)}^2\quad\forall f\in A^2(\mathbb{C}_+).
$$

Let $F_{\mathbb{C}_+}$ denote the set of squares $Q$ in the upper half-plane with sides parallel to the axis such that the boundary of $Q$ intersects $\R$.  A well-known fact is that this function theoretic condition is equivalent to the following geometric condition
$$
\mu(Q)\leq C|Q|\quad\forall Q\in F_{\mathbb{C}_+}.
$$
This can be found in the paper by Hastings, \cite{Hastings} (after a conformal change of variable).  See also the paper by Luecking, \cite{Lu}.  We want to show that the ``T1 Condition'' that appears in Theorem \ref{Bergman} reduces to this well known geometric condition.

Again, by standard estimates for the \cz\, kernel in the case when $d=1$, $\sigma=1$, we have the well-known geometric characterization of the Carleson measures for the Bergman space 
\begin{equation}
\label{everyQ}
\mu(Q)\leq C(\mu)|Q|\quad\forall Q\in F_{\mathbb{C}_+}.
\end{equation}

We now want to demonstrate that if the standard geometric characterization \eqref{everyQ} of Carleson measures for the Bergman spaces holds, then we have that Condition (ii) of Theorem \ref{Bergman} holds as well.  
Denote 
$$
t_{\mu}(Q,z):= \int_{\mathbb{C}_+}\frac{1}{(\zeta-\bar{z})^2}\,\chi_Q(\zeta) \,d\mu(\zeta).
$$
We want to deduce from \eqref{everyQ} that
\begin{equation}
\label{Bt}
\|t_{\mu}(Q,z)\|_{L^2(\mathbb{C}_+;\mu)}^2 \le C\,\mu(Q)
\end{equation}
for any $Q\in F_{\mathbb{C}_+}$.  The idea behind the proof will be to decompose the two appearances of the measure $\mu$ in \eqref{Bt} with simpler analogous that arise from condition \eqref{everyQ}.  These decompositions will introduce error terms that need to be controlled, however these errors will be positive operators that can be controlled via Schur's test.  We now explain in detail.

Fix $ Q\in F_{\mathbb{C}_+}$ and decompose it to the union of rectangles $q$ such that $\dist (q, \R) \approx \diam (q)$.
This can be done in a standard fashion. Now define a function $w$ that is constant on each $q$ with value  given by $\mu(q)/|q|$.
Then by the geometric definition of Carleson measures on the Bergman space we have that
\begin{equation}
\label{w}
\|w\|_{L^{\infty}(\mathbb{C}_+)} \le A\, C(\mu)\,,
\end{equation}
where $A$ is an absolute constant, and $C(\mu)$ is the Carleson constant appearing \eqref{everyQ}. Also
\begin{equation}
\label{w1}
\int_Q w\, dm_2=\mu(Q).
\end{equation}

We write $t_{\mu}(Q,z)$ as the sum of integrals over all the $q$'s. If we replace the measure $\mu$ in $q$ by $w\,dm_2$ in $q$ we will get an error, which is easy to estimate, namely
\begin{equation}
\label{err}
|t_{\mu}(Q,z) - t_{w\,dm_2}(Q,z)| \le A\,\int_{\mathbb{C}_+}\frac{|\Im\zeta|}{|\zeta-\bar{z}|^3}\,\chi_Q(\zeta) \,w\,dm_2(\zeta).
\end{equation}

Denote 
$$
K(\zeta,z):=\frac{|\Im\zeta|}{|\zeta-\bar{z}|^3}\,.
$$

To prove \eqref{Bt}, by an application of \eqref{err}, we are left with proving the following two estimates
\begin{eqnarray}
\label{Bt1}
\|t_{w\,dm_2}(Q,z)\|_{L^2(\mathbb{C}_+;\mu)}^2 & \le & C\,\mu(Q)\quad\forall Q\in F_{\mathbb{C}_+};\\ 
\label{Bt2}
\left\Vert\int K(\zeta,z)\,\chi_Q(\zeta) \,w\,dm_2(\zeta)\right\Vert_{L^2(\mathbb{C}_+;\mu)}^2  & \le &  C\,\mu(Q)\quad\forall Q\in F_{\mathbb{C}_+}.
\end{eqnarray}

We first focus on proving \eqref{Bt1}.  Repeating the averaging procedure described above, we can replace the measure $\mu$ by $wdm_2$ and \eqref{Bt1} reduces to proving
\begin{equation}
\label{Bt3}
\|t_{w\,dm_2}(Q,z)\|_{L^2(\mathbb{C}_+;w\,dm_2)}^2 \le C\,\mu(Q)\quad\forall Q\in F_{\mathbb{C}_+}.
\end{equation}
Again, performing this averaging introduces an error term which is at most proportional (up to an absolute constant) to
\begin{equation}
\label{Bt4}
\left\Vert\int K_2(\zeta,z)\,\chi_Q(\zeta) \,w\,dm_2(\zeta)\right\Vert_{L^2(\mathbb{C}_+;\mu)}\,,
\end{equation}
with
$$
K_2(\zeta,z):=\frac{|\Im z|}{|\zeta-\bar{z}|^3}\,.
$$

Now \eqref{Bt3} is obvious from \eqref{w} and from the fact that $1/(\zeta-z)^2$ represents the kernel of a Calder\'on-Zygmund operator on the plane. In particular, it is bounded on $L^2(\mathbb{C}_+;m_2)$, and using \eqref{w} and \eqref{w1} we obtain
$$
\|t_{w\,dm_2}(Q,z)\|_{L^2(\mathbb{C}_+;w\,dm_2)}^2 \le A\, \|w\|_{L^{\infty}(\mathbb{C}_+)}\, \int_Q w^2\, dm_2 \le  A\, \|w\|_{L^{\infty}(\mathbb{C}_+)}^2\, \int_Q w\, dm_2=A\, C(\mu)^2 \,\mu(Q)\,,
$$
which is exactly \eqref{Bt3}. 

We are left to prove \eqref{Bt2} and \eqref{Bt4}. But, these kernels are positive, and they change by an absolute multiplicative constant when the argument runs over any $q$ which we introduced in the averaging of the measure $\mu$. So \eqref{Bt2} is the same as the following estimate:
\begin{equation}
\label{Bt5}
\left\Vert\int K_1(\zeta,z)\,\chi_Q(\zeta) \,w\,dm_2(\zeta)\right\Vert_{L^2(\mathbb{C}_+;w\,dm_2)}^2 \le C\,\mu(Q)\quad\forall Q\in F_{\mathbb{C}_+}.
\end{equation}
We are left to prove \eqref{Bt5} and its counterpart with $K_2$ appearing.  However, one can easily see that these positive kernels induce bounded operators $L^2(\mathbb{C}_+;m_2)\rightarrow L^2(\mathbb{C}_+; m_2)$. 

A direct calculation shows that:
$$
\int_{\mathbb{C}_+} \frac{|\Im \zeta|}{|\Im z|^{1/2}|\zeta-\bar{z}|^3}\,dm_2(z) \le A\, \frac{1}{|\Im\zeta|^{1/2}}\,.
$$
$$
\int_{\mathbb{C}_+} \frac{|\Im \zeta|^{1/2}}{|\zeta-\bar{z}|^3}\,dm_2(\zeta) \le A\, \frac{1}{|\Im z|^{1/2}}\,.
$$
Since they are positive operators, one can conclude they are bounded operators by an application of Schur's Test and the computations above.  So, 
\begin{equation}
\label{Bt6}
\left\Vert\int K_1(\zeta,z)\,\chi_Q(\zeta) \,w\,dm_2(\zeta)\right\Vert_{L^2(\mathbb{C}_+;w\,dm_2)}^2 \le A\, \|w\|_{L^{\infty}(\mathbb{C}_+)} \,\int_Q w^2\, dm_2\le 
A\, C(\mu)^2\, \mu(Q)\,,
\end{equation}
by \eqref{w} and \eqref{w1} again. But this is exactly \eqref{Bt5}. The same argument holds for the kernel $K_2$.

\bigskip

\section{The beginning of the proofs of Theorems \ref{md}, \ref{md1}. Littlewood--Paley Decompositions with respect to random lattices:  Terminal and Transit Cubes}

The difficulty lies in the fact that $\mu$ is virtually arbitrary. For example, a general measure $\mu$ need not have the doubling property. The key idea about how to cope with the ``badness'' of $\mu$ is the use of randomness. We will decompose functions using random dyadic lattices, and then resulting averages will somehow be good.  This will allow us to ``smooth'' out the difficulties of the measure $\mu$.

\medskip

The proof of Theorem \ref{md} will be divided in several parts.  We begin by collecting collections of cubes that are beneficial for the proof of the Theorem.  These collections are then used to decompose arbitrary functions in $L^2(\R^d;\mu)$.  In the remaining sections since there is no potential for confusion, we will let $T$ denote the operator $T_{\mu,m}$ appearing in Theorem \ref{md}.

We assume that $F=\supp\mu$ lies in a  cube $\frac14 Q$, where $Q$ is a certain unit cube. With out loss of generality, consider this cube as one of the unit cubes of a standard dyadic lattice $\Dk$.  Furthermore, let $\Dk_1$, $\Dk_2$ be two shifted dyadic lattices, one by a small shift $\om_1$, another by $\om_2$, where $\om_i \in \frac1{20} Q=\Omega$. These shifts are independent and the measure $\mathbb{P}$ on $\Omega$ is normalized to $1$.

\subsection{Terminal and transit cubes}
\label{tt}

We will call the cube $Q\in \Dk_i$ a {\it terminal cube} if $2Q$ is contained in our open set $H$ or $\mu(Q)=0$.  All other cubes are called {\it transit}.   Let us denote by $\Dk_i^{term}$ and $\Dk_i^{tr}$ the terminal and transit cubes from $\Dk_i$.   We first state two obvious Lemmas.

\begin{lemma}
\label{obv1}
If $Q$ belongs to $\Dk_i^{term}$, then
$$
|k(x,y)|\le \frac1{\ell(Q)^m}\,.
$$
\end{lemma}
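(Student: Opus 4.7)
The plan is to unpack the two cases in the definition of a terminal cube and observe that in each case the assertion follows directly from the Bergman-type kernel bound. Since the statement is labelled obvious, I am only sketching where the estimate comes from; the only honest content is a distance comparison between a cube and the complement of a larger cube.

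First I would recall that a cube $Q\in\Dk_i^{term}$ satisfies either (a) $2Q\subset H$ or (b) $\mu(Q)=0$. The lemma is intended to be applied to pairs $x,y\in Q$ (this is how it will be used in the subsequent $L^2$ estimates where it appears under a $\mu$-integral), so that is the regime I want to address. Case (b) is the trivial one: whenever the eventual use of the bound is against $d\mu$ restricted to $Q$, the assertion is vacuous. One can also simply set the kernel equal to $0$ on such a cube, so the bound holds with any constant; nothing substantive is needed here.

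The real content is Case (a). The key geometric observation is that if $2Q\subset H$, then for every $x\in Q$ one has
\begin{equation*}
d(x)=\dist(x,\R^d\setminus H)\ge \dist(x,\R^d\setminus 2Q)\ge \tfrac12\ell(Q),
\end{equation*}
because $x$ lies in the concentric half of $2Q$ and the complement of $H$ is pushed at least to the boundary of $2Q$. The same lower bound holds for $y\in Q$. Combining this with the Bergman-type hypothesis
\begin{equation*}
|k(x,y)|\le \frac{1}{\max(d(x)^m,d(y)^m)}
\end{equation*}
one immediately gets $|k(x,y)|\le 2^m\,\ell(Q)^{-m}$, which is the asserted estimate up to the dimensional constant $2^m$ that is absorbed into the notation.

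There is no real obstacle: the only step with any bite is the distance inequality $d(x)\ge\ell(Q)/2$ for $x\in Q$ when $2Q\subset H$, and that is just the elementary property that a cube sits in the ``core'' of its double. Everything else is a direct appeal to hypotheses already stated in Theorem \ref{md}. The main purpose of recording the lemma is to have the clean substitute $\ell(Q)^{-m}$ for $\max(d(x)^m,d(y)^m)^{-1}$ available inside $Q$ when $Q$ is terminal, which is what will be exploited in the Littlewood--Paley decomposition that follows.
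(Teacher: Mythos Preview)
Your argument is essentially identical to the paper's: the paper also just notes that $Q\subset 2Q\subset H$ forces $d(x)\gtrsim \ell(Q)$ for $x\in Q$, and then invokes the Bergman-type bound $|k(x,y)|\le 1/\max(d(x)^m,d(y)^m)$. Your version is in fact slightly more careful about the constant (you correctly get $d(x)\ge \ell(Q)/2$, whereas the paper writes $d(x)\ge \ell(Q)$), and your handling of the $\mu(Q)=0$ case is fine; one small remark is that in the later application (see \eqref{konwholeR1}) only $x$ is required to sit in the terminal cube while $y$ ranges over all of $\R^d$, but your argument already covers this since the bound only needs $d(x)\ge \ell(Q)/2$ for one of the two variables.
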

This follows since $Q\subset 2Q\subset H$ and so for $x,y\in Q$ we have that $d(x)\geq\ell(Q)$ and similarly for $y$.  Another obvious lemma:

\begin{lemma}
\label{obv2}
If $Q$ belongs to $\Dk_i^{tr}$, then
$$
\mu(Q) \le C(d)\,\ell(Q)^m\,.
$$
\end{lemma}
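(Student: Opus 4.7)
The plan is to use the defining property of a transit cube together with the hypothesis on non-Ahlfors balls from Theorem \ref{md}.

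Since $Q \in \Dk_i^{tr}$ is a transit cube, by definition $2Q$ is not contained in $H$, so there exists some point $x_0 \in 2Q$ with $x_0 \notin H$. The idea is to trap $Q$ inside a ball centered at this ``bad'' point, and then exploit the assumption that every ball with $\mu(B(x,r)) > r^m$ must lie in $H$.

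Concretely, I would pick $r = c(d)\,\ell(Q)$ with $c(d)$ a purely dimensional constant (for example $3\sqrt{d}$) so that $Q \subset B(x_0, r)$; this is possible because $x_0 \in 2Q$ and the diameter of $2Q$ is $2\sqrt{d}\,\ell(Q)$. Then $\mu(Q) \le \mu(B(x_0, r))$. If we had $\mu(B(x_0, r)) > r^m$, the hypothesis of Theorem \ref{md} would force $B(x_0, r) \subset H$, contradicting $x_0 \notin H$. Therefore $\mu(B(x_0, r)) \le r^m$, and
\[
\mu(Q) \le \mu(B(x_0, r)) \le r^m = c(d)^m \ell(Q)^m = C(d)\,\ell(Q)^m.
\]

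There is no real obstacle here; the argument is essentially a one-line contradiction using the ``non-Ahlfors balls are captured by $H$'' hypothesis. The only thing to be careful about is choosing the dimensional constant $c(d)$ large enough that $Q \subset B(x_0, r)$ is guaranteed for \emph{any} $x_0 \in 2Q$, which is immediate from the elementary fact that the diameter of $2Q$ is $2\sqrt{d}\,\ell(Q)$.
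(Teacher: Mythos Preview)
Your argument is correct and is exactly the intended one: the paper states this lemma without proof, calling it ``obvious,'' and the natural justification is precisely the contrapositive of the non-Ahlfors hypothesis that you wrote out. There is nothing to add.
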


\medskip

We would like to denote $Q_j$ as a dyadic cube belonging to the dyadic lattice $\Dk_j$.  Unfortunately, this makes the notation later very cumbersome.  So, we will use the letter $Q$ to denote a dyadic cube belonging to the lattice $\Dk_1$ and the letter $R$ to denote a dyadic cube belonging to the lattice $\Dk_2$.

From now on, we will always denote by $Q_j$ ($j=1,\dots ,2^d$) the $2^d$ dyadic subcubes of a cube $Q$ enumerated in some ``natural order''.   Similarly, we will always denote by $R_j$ ($j=1,\dots ,2^d$) the $2^d$ dyadic subcubes of a cube $R$ from $\mathcal{D}_2$.  

Next, notice that there are special unit cubes $Q^0$ and $R^0$ of the dyadic lattices $\Dk_1$ and $\Dk_2$ respectively.   They have the property that they are both transit cubes and contain $F$ deep inside them.

\subsection{The Projections $\Lambda$ and $\Delta_Q$}
\label{pr}

Let $\mathcal{D}$ be one of the dyadic lattices above. For a function $\psi\in L^1(\mathbb{R}^d;\mu)$ and for a cube $Q\subset\R^d$, denote by
$\langle \psi\rangle\ci Q$ the average value of $\psi$ over $Q$ with respect to the measure $\mu$, i.e.,

$$
\langle \psi\rangle\ci Q := \frac{1}{\mu(Q)}\int_Q\psi\,d\mu
$$
(of course, $\langle \psi\rangle\ci Q$ makes sense only for cubes $Q$ with $\mu(Q)>0$).
Put
$$
\Lambda\vf :=\langle \vf\rangle\ci {Q^0}\,.
$$
Clearly, $\Lambda\vf\in L^2(\mathbb{R}^d;\mu)$ for all $\vf\in L^2(\mathbb{R}^d;\mu)$, and $\Lambda^2=\Lambda$, i.e., $\Lambda$ is a projection. Note also, that actually $\Lambda$
does not depend on the lattice $\mathcal{D}$ because the average is taken over the whole support of the measure $\mu$ regardless of the position of the cube $Q^0$ (or $R^0$).

\medskip

\begin{remark}
Below we will start almost every claim by ``Assume (for definiteness) that $\ell(Q)\le \ell(R)$\dots ''.  Later, for ease of notation, we will write that a cube $Q\in\mathcal{X}\cap\mathcal{Y}$ and mean that the dyadic cube $Q$ has both property $\mathcal{X}$ and $\mathcal{Y}$ simultaneously.  
\end{remark}

\medskip

For every transit cube $Q\in\Dk_1$, define $\Delta\ci Q\vf$ by
$$
\Delta\ci Q\vf\bigr|\ci{\R^d\setminus (Q\cup X)}:=0,\qquad\,\,\,\,\,
\Delta\ci Q\vf\bigr|\ci{Q_j\cap X}:=\left\{\aligned\left[\langle \vf\rangle\ci {Q_j} -\langle \vf\rangle\ci {Q}\right] &\text{\quad if $Q_j$ is transit;}\\ \vf-\langle \vf\rangle\ci {Q} &\text{\quad if $Q_j$ is terminal}\endaligned\right.$$
($j=1,\dots , 2^d$).
Observe that for every transit cube $Q$, we have $\mu(Q)>0$, so our
definition makes sense since zero can not appear in the denominator.  We repeat the same definition for $R\in \mathcal{D}_2$.

We have the following easy properties of the operators $\Delta\ci Q\vf$.

\begin{lemma}
\label{DeltaQ}
For every $\vf\in L^2(\mathbb{R}^d;\mu)$ and every transit cube $Q$ the following properties hold:
\begin{itemize}
\item[(1)] \emph{$\Delta\ci Q\vf\in L^2(\mathbb{R}^d;\mu)$};
\item[(2)] \emph{$\int_{\R^d}\Delta\ci Q\vf\,d\mu=0$};
\item[(3)] \emph{$\Delta\ci Q$ is a projection, i.e., $\Delta\ci Q^2=\Delta\ci Q$};
\item[(4)] \emph{$\Delta\ci Q\Lambda=\Lambda\Delta\ci Q=0$};
\item[(5)]  \emph{If $Q,\widetilde{Q}$ are transit, 
$\widetilde{Q}\neq Q$, then $\Delta\ci Q\Delta\ci {\widetilde{Q}}=0$}.
\end{itemize}
\end{lemma}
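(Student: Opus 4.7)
The plan is to reduce all five items to two structural observations about the definition of $\Delta_Q$. First, $\Delta_Q\varphi$ is always supported in $Q$ and is constant on every transit child $Q_j$ (while it equals $\varphi-\langle\varphi\rangle_Q$ on every terminal child). Second, $\Delta_Q$ annihilates any constant function $c$: the defining formula produces $\langle c\rangle_{Q_j}-\langle c\rangle_Q=0$ on a transit child and $c-\langle c\rangle_Q=0$ on a terminal child. A final observation I will invoke repeatedly is the \emph{terminal-inheritance} principle: if $Q'\subset Q''$ and $Q''$ is terminal, then $Q'$ is terminal as well, since both defining conditions ($2Q''\subset H$ and $\mu(Q'')=0$) pass to subcubes.

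With these in hand, (1)--(4) reduce to bookkeeping. For (1) I would estimate the $L^2$-norm piecewise: on each transit child $Q_j$ the contribution $|\langle\varphi\rangle_{Q_j}-\langle\varphi\rangle_Q|^2\mu(Q_j)$ is controlled via Cauchy--Schwarz by a multiple of $\|\varphi\|_{L^2(Q,\mu)}^2$, and on each terminal child the contribution is just $\|\varphi-\langle\varphi\rangle_Q\|_{L^2(Q_j,\mu)}^2$. For (2), a single computation shows $\int_{Q_j}\Delta_Q\varphi\,d\mu=\int_{Q_j}\varphi\,d\mu-\langle\varphi\rangle_Q\mu(Q_j)$ in both the transit and terminal cases; summation over $j$ telescopes to zero. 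For (3), (2) gives $\langle\Delta_Q\varphi\rangle_Q=0$, so plugging $\Delta_Q\varphi$ into the definition of $\Delta_Q$ reproduces the original values on each child. For (4), $\Lambda\Delta_Q\varphi=\mu(Q^0)^{-1}\int\Delta_Q\varphi\,d\mu=0$ by (2), and $\Delta_Q\Lambda\varphi=\Delta_Q(\text{constant})=0$ by the annihilation of constants.

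The one step with genuine content is (5), and this is where I expect the main (minor) obstacle to lie. For distinct transit cubes $Q,\widetilde Q$ in the same dyadic lattice, nesting leaves three possibilities. If $Q\cap\widetilde Q=\varnothing$, then $\Delta_{\widetilde Q}\varphi$ is supported in $\widetilde Q$ and therefore vanishes on $Q$, making $\Delta_Q\Delta_{\widetilde Q}\varphi$ trivially zero. If $\widetilde Q\subsetneq Q$, then $\widetilde Q$ sits in a unique immediate child $Q_j$; here the terminal-inheritance principle forces $Q_j$ itself to be transit, since otherwise $\widetilde Q$ would inherit terminality, contradicting the hypothesis. Because $\Delta_{\widetilde Q}\varphi$ is supported in $\widetilde Q\subset Q_j$ and has zero $\mu$-integral by (2), its average over $Q$ and over each $Q_{j'}$ vanishes; on the $Q_{j'}$ with $j'\neq j$ the function is also identically zero, so $\Delta_Q\Delta_{\widetilde Q}\varphi\equiv 0$. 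The case $Q\subsetneq\widetilde Q$ is symmetric: $Q$ sits in some child $\widetilde Q_j$ of $\widetilde Q$, which by inheritance must be transit, so $\Delta_{\widetilde Q}\varphi$ is a single constant on $\widetilde Q_j\supset Q$, and $\Delta_Q$ kills constants. Outside the inheritance observation, every step is a direct unfolding of the definitions.
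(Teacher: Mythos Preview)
Your proof is correct and is precisely the direct computation the paper has in mind; the paper itself does not give a proof but simply states that the lemma ``is given by direct computation and is left as an easy exercise for the reader.'' Your terminal-inheritance observation (that $2Q'\subset 2Q''$ whenever $Q'\subset Q''$ are dyadic, so terminality passes to subcubes) is exactly the small structural point needed to make the case analysis in (5) go through cleanly.
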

The proof of this Lemma is given by direct computation and is left as an easy exercise for the reader.  Using the operators $\Delta\ci Q\vf$ we have a decomposition of $L^2(\mathbb{R}^d;\mu)$ functions.

\begin{lemma}
\label{Riesz}
For every $\vf\in L^2(\R^d;\mu)$ we have
$$
\vf=\Lambda\vf+\sum_{Q\in\Dk_1^{tr}}\Delta\ci Q\vf,
$$
the series converges in $L^2(\R^d;\mu)$ and, moreover,
$$
\|\vf\|^2\ci{L^2(\R^d;\mu)}=
\|\Lambda\vf\|^2\ci{L^2(\R^d;\mu)}+\sum_{Q\in\Dk_1^{tr}}\|\Delta\ci Q\vf\|^2\ci{L^2(\R^d;\mu)}\,.
$$
\end{lemma}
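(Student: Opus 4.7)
The plan is to interpret the right-hand side as a martingale convergence statement. For each integer $n\ge 0$, define
$$
S_n\vf := \Lambda\vf + \sum_{\substack{Q\in \Dk_1^{tr}\\ \ell(Q)\ge 2^{1-n}}} \Delta\ci Q\vf.
$$
From Lemma \ref{DeltaQ}, properties (3)--(5), together with the fact that $\Delta\ci Q \Lambda=\Lambda\Delta\ci Q=0$, the operators $\Lambda$ and the $\Delta\ci Q$ are pairwise orthogonal projections on $L^2(\R^d;\mu)$. Consequently $S_n$ is itself an orthogonal projection and
$$
\|S_n\vf\|^2_{L^2(\R^d;\mu)} = \|\Lambda\vf\|^2_{L^2(\R^d;\mu)} + \sum_{\substack{Q\in \Dk_1^{tr}\\ \ell(Q)\ge 2^{1-n}}}\|\Delta\ci Q\vf\|^2_{L^2(\R^d;\mu)} \le \|\vf\|^2_{L^2(\R^d;\mu)}.
$$
In particular the series $\sum_{Q\in \Dk_1^{tr}} \|\Delta\ci Q\vf\|^2$ converges, so by orthogonality $\{S_n\vf\}$ is Cauchy in $L^2(\R^d;\mu)$ and converges to some limit $\psi\in L^2(\R^d;\mu)$.

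The next step is to identify $\psi=\vf$, which I would do by pointwise analysis of the telescoping. Fix $x\in F=\supp\mu$. Write $Q^{(k)}(x)$ for the cube of $\Dk_1$ of side $2^{-k}$ containing $x$ (well defined for $k\ge 0$ since $F\subset Q^0$). Two cases arise. If every ancestor $Q^{(k)}(x)$, $0\le k<n$, is transit, then from the definition of $\Delta\ci Q\vf$ and a direct telescoping argument we get
$$
S_n\vf(x) = \langle \vf\rangle\ci{Q^{(n)}(x)}.
$$
If instead there is a smallest $k_0<n$ for which $Q^{(k_0)}(x)$ is terminal, i.e., $Q^{(k_0)}(x)$ is a terminal child of the transit cube $Q^{(k_0-1)}(x)$, then the telescoping collapses earlier and
$$
S_n\vf(x) = \vf(x)\quad \text{for every } n>k_0.
$$
Thus in the second case convergence is immediate. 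In the first case we need $\langle\vf\rangle\ci{Q^{(n)}(x)}\to\vf(x)$ as $n\to\infty$, which is the standard dyadic differentiation theorem: since $\mathrm{diam}\,Q^{(n)}(x)\to 0$ and $x\in\supp\mu$, this holds for $\mu$-a.e.\ $x$ (for $\vf$ bounded it is an elementary martingale a.e.\ convergence; the general $L^2$ case follows by the standard approximation via continuous functions, where the averages converge everywhere by continuity, combined with a weak-type $(2,2)$ bound on the dyadic maximal function $\sup_n |\langle\vf\rangle\ci{Q^{(n)}(x)}|$ which is itself immediate from the orthogonal expansion bound $\|S_n\vf\|\le\|\vf\|$).

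Combining the two cases, $S_n\vf\to\vf$ pointwise $\mu$-a.e., hence $\psi=\vf$ in $L^2(\R^d;\mu)$. This proves the convergence of the series to $\vf$, and letting $n\to\infty$ in the orthogonality identity above yields the Pythagorean identity
$$
\|\vf\|^2_{L^2(\R^d;\mu)} = \|\Lambda\vf\|^2_{L^2(\R^d;\mu)} + \sum_{Q\in \Dk_1^{tr}} \|\Delta\ci Q\vf\|^2_{L^2(\R^d;\mu)}.
$$
The only subtle point is the a.e.\ differentiation step, since $\mu$ is nondoubling; but it is used here only for a fixed dyadic martingale, where it is a consequence of the general theorem that bounded $L^2$ martingales converge a.e., so no doubling is required. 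Everything else is a direct application of the orthogonality relations already collected in Lemma \ref{DeltaQ}.
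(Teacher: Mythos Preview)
Your proof is correct and follows essentially the same approach as the paper: the paper also argues pointwise by the same telescoping dichotomy (if $x$ eventually lies in a terminal cube the partial sum stabilizes at $\vf(x)$, otherwise it equals $\langle\vf\rangle\ci{Q^{k}}$ and one invokes dyadic differentiation), and then appeals to the mutual orthogonality of $\Lambda$ and the $\Delta\ci Q$ from Lemma~\ref{DeltaQ}. If anything, you are more explicit than the paper about why the $L^2$ limit exists (via Cauchy-ness of the orthogonal partial sums) before identifying it with $\vf$ through the pointwise argument; the paper compresses this into the single sentence ``the orthogonality \dots\ proves the lemma.'' One small remark: the weak-type bound on the dyadic maximal function is not literally ``immediate from $\|S_n\vf\|\le\|\vf\|$''; it is Doob's inequality (or the stopping-time weak $(1,1)$ bound), which holds for any $\mu$ and is what actually drives the a.e.\ convergence in the all-transit case.
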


\begin{proof}
Note first of all that if one understands the sum 
$$\sum_{Q\in\Dk_1^{tr}}$$
as $\lim_{k\to\infty}\sum_{Q\in\Dk_1^{tr}:\ell(Q)> 2^{-k}}$, then for
$\mu$-almost every $x\in\R^d$, one has
$$
\vf(x)=\Lambda\vf(x)+\sum_{Q\in\Dk_1^{tr}}\Delta\ci Q\vf( x).
$$
Indeed, the claim is obvious if the point $x$ lies in some terminal cube.
Suppose now that this is not the case. Observe that

$$
\Lambda\vf(x)+\sum_{Q\in\Dk_1^{tr}:\ell(Q)> 2^{-k}}\Delta\ci Q\vf(x)=
\langle \vf\rangle\ci{Q^{k}} ,
$$
where $Q^{k}$ is the dyadic cube of size $2^{-k}$, containing $x$.
Therefore, the claim is true if
$$
\langle \vf\rangle\ci{Q^{k}}\to \vf(x)\,.
$$
But, the exceptional set for this condition has $\mu$-measure
$0$.

Now the orthogonality of all $\Delta_Q\vf$ between themselves, and their orthogonality to $\La\vf$ proves the lemma.

\end{proof}

\section{Good and bad functions}
\label{gb}

In this section we introduce a decomposition of functions $f\in L^2(\R^d;\mu)$ into ``good'' and ``bad'' parts based on random dyadic lattices.  Additionally, it is shown that the proof of Theorem \ref{md} can be reduced to the case of ``good'' functions, since the ``bad'' terms can be averaged out.  Finally, we decompose the needed estimates on the ``good'' functions into three distinct terms that will be handled separately.

We consider functions $f$ and $g \in L^2(\R^d;\mu)$. Fixing $\om_1,\om_2 \in \Omega$ we construct two dyadic lattices $\Dk_1$ and $\Dk_2$ as before.   By Lemma \ref{Riesz} the functions $f$ and $g$ have decompositions given by 
$$
f= \La f +\sum_{Q\in \Dk_1^{tr}} \Delta_Q f,\quad g=\La g +\sum_{R\in \Dk_2^{tr}} \Delta_R g.
$$

\bigskip

 For a  dyadic cube $R$ we denote $\cup_{i=1}^{2^d} \pd R_i$ by $sk \,R$, called the \textit{skeleton} of $R$. Here the $R_i$ are the dyadic children of $R$. 
 
\begin{defi}
Let $\tau, m$ be the parameters of the \cz \,kernel $k$. We fix $\alpha= \frac{\tau}{2\tau+2m}$ and a small number $\delta>0$. Fix $S\ge 2$ to be chosen later.  Choose an integer $r$ such that 
\begin{equation}
\label{r}
2^{-r}\le \delta^S < 2^{-r+1}\,.
\end{equation}
A cube $Q\in \Dk_1$ is called {\it bad} (or $\delta$-bad) if there exists $R\in \Dk_2$ such that

1) $\ell(R)\ge 2^r \ell(Q)\,,$

2) $\dist (Q, sk\,R) <\ell(Q)^{\alpha} \ell(R)^{1-\alpha}\,.$

Let $\mathcal{B}_1$ denote the collection of all bad cubes and correspondingly let $\mathcal{G}_1$ denote the collection of good cubes.  The analogous definitions give the collection of {\it bad} cubes $R\in \Dk_2$, $\mathcal{B}_2$ and the collection of good cubes $\mathcal{G}_2$.  
\end{defi}

\bigskip

We say, that $\vf = \sum_{Q\in \Dk_1^{tr}} \Delta_Q\vf$ is \textit{bad} if in the sum only bad $Q$'s participate in this decomposition. The same applies to 
$\psi = \sum_{Q\in \Dk_2^{tr}} \Delta_Q\psi$. In particular, given $\om_1,\om_2 \in \Omega$, we fix
the decomposition of $f$ and $g$ into good and bad parts:
$$
f= f_{good} + f_{bad}\,,\,\,\text{where}\,\, f_{good} = \La f + \sum_{Q\in \Dk_1^{tr}\cap\mathcal{G}_1} \Delta_Q f\,,
$$
$$
g= g_{good} + g_{bad}\,,\,\,\text{where}\,\, g_{good} = \La g + \sum_{R\in \Dk_2^{tr}\cap\mathcal{G}_2} \Delta_R g\,.
$$

\begin{theorem}
\label{badprob}
One can choose $S= S(\alpha)$ in such a way that for any fixed $Q\in \mathcal{D}_1$,
\begin{equation}\label{prob}\mathbb{P}_{\om_2}\{Q \,\text{is bad}\} \leq \delta^2\,.
\end{equation}
By symmetry $\mathbb{P}_{\om_1}\{R \,\text{is bad}\} \leq \delta^2$ for any fixed $R\in \mathcal{D}_2$.
\end{theorem}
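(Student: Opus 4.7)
The plan is to bound $\mathbb{P}_{\om_2}\{Q\text{ is bad}\}$ by a union bound over the dyadic scales of a witness $R\in\Dk_2$, using at each scale that the random shift $\om_2$ is uniform on the small cube $\Omega$. Fix $Q\in\Dk_1$ and for each integer $k\ge r$ set
\[
E_k:=\bigl\{\om_2\in\Omega: \exists\, R\in\Dk_2\text{ with } \ell(R)=2^k\ell(Q)\text{ and }\dist(Q,sk\,R)<\ell(Q)^{\alpha}\ell(R)^{1-\alpha}\bigr\}.
\]
By the definition of badness, $\{Q\text{ is bad}\}=\bigcup_{k\ge r}E_k$, hence $\mathbb{P}_{\om_2}\{Q\text{ is bad}\}\le\sum_{k\ge r}\mathbb{P}_{\om_2}(E_k)$.

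For the per-scale estimate, observe that $\bigcup_{R\in\Dk_2,\,\ell(R)=2^k\ell(Q)}sk\,R$ is exactly the $\om_2$-translate of a deterministic $d$-dimensional grid of hyperplanes spaced $\ell(R)/2$ apart. The event $E_k$ becomes the statement that the (fixed) cube $Q$ lies within distance $\ell(Q)^\alpha\ell(R)^{1-\alpha}$ of this translated grid, a condition which splits coordinatewise into $d$ slab conditions on the components of $\om_2$. Since $\om_2$ is uniformly distributed on $\Omega$, a direct computation --- handling separately the regime $\ell(R)\lesssim\diam\Omega$, where the shift covers a full period modulo $\ell(R)/2$ and the fraction of bad shifts in each coordinate is $\sim\ell(Q)^\alpha\ell(R)^{-\alpha}$, and the regime $\ell(R)\gtrsim\diam\Omega$, where only $O(1)$ hyperplanes of the grid can approach $Q$ at all and the scales contributing non-trivially are themselves bounded in number --- yields
\[
\mathbb{P}_{\om_2}(E_k)\le C(d)\Bigl(\frac{\ell(Q)}{\ell(R)}\Bigr)^{\alpha}=C(d)\,2^{-k\alpha}.
\]

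Summing the resulting geometric series and invoking \eqref{r} gives
\[
\mathbb{P}_{\om_2}\{Q\text{ is bad}\}\le C(d)\sum_{k\ge r}2^{-k\alpha}\le\frac{C(d)}{1-2^{-\alpha}}\,2^{-r\alpha}\le C(d,\alpha)\,\delta^{S\alpha}.
\]
I would then choose $S=S(\alpha)$ with $S\alpha$ sufficiently large --- any $S>2/\alpha$ suffices after taking $\delta$ small enough to absorb the constant $C(d,\alpha)$ --- so that the right-hand side is at most $\delta^2$. The symmetric statement for a fixed $R\in\Dk_2$ is obtained by interchanging the roles of the two lattices verbatim.

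The main obstacle is the per-scale estimate when $\ell(R)$ exceeds $\diam\Omega$: the clean uniform-modulo-period heuristic breaks down, and one must argue geometrically that, because $Q$ lies in a fixed unit cube and only small shifts in $\Omega$ are being applied, only a bounded number of hyperplanes of the shifted grid can ever come within the critical distance of $Q$. The careful book-keeping of which scales in this regime contribute non-trivially --- and the verification that their aggregate probability still fits within the $\delta^{S\alpha}$ budget --- is the one genuinely technical point in an otherwise routine probabilistic calculation.
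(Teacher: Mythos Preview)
Your argument is correct and matches the paper's own proof: a union bound over scales $k\ge r$, a per-scale slab/volume estimate giving $\mathbb{P}_{\om_2}(E_k)\le C(d)\,2^{-k\alpha}$, summation of the geometric series, and then the choice of $S$ (the paper simply takes $S=3/\alpha$). The worry in your final paragraph is unnecessary: since $\Dk_2$ is generated by dyadically subdividing a unit cube one always has $\ell(R)\le 1$, so in the regime $\ell(R)\gtrsim\diam\Omega$ the strip width still satisfies $\ell(Q)^{\alpha}\ell(R)^{1-\alpha}=2^{-k\alpha}\ell(R)\le 2^{-k\alpha}$ while $\diam\Omega$ is a fixed constant, and the same bound $\mathbb{P}_{\om_2}(E_k)\le C(d)\,2^{-k\alpha}$ holds without any separate book-keeping.
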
 
\begin{proof}
Consider the unit  cube $Q^0$ of $\mathcal{D}_1$, which satisfies $F\subset \frac14 Q^0$. In particular, $\om + Q^0$ contains $F$ in $\frac12 (\om + Q^0)$ for every $\om \in (-1/40,1/40]^d$. Recall that the normalized Lebesgue measure on the cube $(-1/40,1/40]^d$ is our probability measure. In the construction of our random lattices, all the lattices ``start" with the cube of unit side given by $\om + Q^0$, and then $\om +Q^0$ is dyadically subdivided, and thus generates the dyadic lattice $\mathcal{D}(\om)$.   We do two such constructions, giving us two such arbitrary lattices $\mathcal{D}_1$, $\mathcal{D}_2$.  By the definition above, a cube $Q\in\mathcal{D}_1$ is bad if there exists a cube $R\in\mathcal{D}_2$ such that $\dist(Q, \partial R)\le \ell(Q)^{\al}\ell(R)^{1-\al}$ and $\ell(R)\ge 2^r\ell(Q)$.

To choose the number $S$ we first fix an integer $k\geq r$. Let us estimate the probability that there exists a cube $R\in\mathcal{D}_2$ of size $\ell(R)=2^k\ell(Q)$ such that $\dist(Q, \partial R)\le \ell(Q)^{\al}\ell(R)^{1-\al}$.  Geometric reasoning show it is equal to the ratio of the area of the narrow strip of size $2^{(1-\alpha)k}\ell(Q)$ around the boundary of the cube $R$ to the whole area of $2^k Q$.  The area of the strip around the boundary is at most $C|Q| 2^{dk-\alpha k}$ (here $C=C(d)$ is an absolute constant depending only on the dimension $d$).  We conclude that this ratio is less than
$C\,2^{-k\al}$.  Therefore, the probability that the cube $Q$ is bad  does not exceed
$$
C\,\sum_{k=r}  ^\infty 2^{-k\al}=\frac{C\,\cdot 2^{-r\al}}{1-2^{-\al}}\,.
$$
Since the integer $r$ was chosen so that $2^{-r}\leq \delta^S<2^{-r+1}$ we will then simply choose the minimal $S=S(\alpha)$ such that $\frac{C\, \delta^{S\al}}{1-2^{-\al}}\leq \delta^2$ (of course, $S=3/\al$ is enough for all small $\delta$'s).
\end{proof}

The use of Theorem \ref{badprob} gives us
 $S= S(\alpha)$ in such a way that for any fixed $Q\in \mathcal{D}_1$,
 \begin{equation}
 \label{probagain}
 \mathbb{P}_{\om_2}\{Q \,\text{is bad}\} \leq \delta^2\,.
 \end{equation}
We are now ready to prove

\begin{theorem}
\label{badprobfagain}
Consider the decomposition of $f$ from Lemma \ref{Riesz} and the resulting $f_{bad}$ arising from every $\om_2\in\Omega$. One can choose $S= S(\alpha)$ in such a way that 
\begin{equation}
\label{probf}
\mathbb{E}(\|f_{bad}\|_{L^2(\R^d;\mu)}) \leq \delta\|f\|_{L^2(\R^d;\mu)}\,.
\end{equation}
\end{theorem}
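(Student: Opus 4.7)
The plan is to view $f_{bad}$ as a function of the pair $(\omega_1, \omega_2)$, condition on $\omega_1$, use orthogonality to pass to a sum of squared norms, and then exploit the pointwise probability bound \eqref{probagain} against the integer index $Q \in \mathcal{D}_1^{tr}$. The crucial observation is that once $\omega_1$ is fixed, the lattice $\mathcal{D}_1$ and the projections $\Delta_Q f$ are also fixed, whereas the event ``$Q$ is bad'' depends only on $\omega_2$ (it requires the existence of a suitable $R \in \mathcal{D}_2$). This is exactly the setup in which \eqref{probagain} applies cube-by-cube.

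First, fix $\omega_1 \in \Omega$. By definition,
\[
f_{bad} \;=\; \sum_{Q\in \mathcal{D}_1^{tr}} \chi_{\{Q\,\text{bad}\}}(\omega_2)\, \Delta_Q f,
\]
and the cubes $Q$ range over the (fixed) transit lattice $\mathcal{D}_1^{tr}(\omega_1)$. By the orthogonality of the projections $\Delta_Q$ asserted in Lemma \ref{DeltaQ}(v) and made quantitative by Lemma \ref{Riesz},
\[
\|f_{bad}\|_{L^2(\mathbb{R}^d;\mu)}^2 \;=\; \sum_{Q\in \mathcal{D}_1^{tr}} \chi_{\{Q\,\text{bad}\}}(\omega_2)\,\|\Delta_Q f\|_{L^2(\mathbb{R}^d;\mu)}^2.
\]
Taking the expectation $\mathbb{E}_{\omega_2}$ of this identity (with $\omega_1$ still fixed) and applying Fubini together with \eqref{probagain} yields
\[
\mathbb{E}_{\omega_2}\!\left(\|f_{bad}\|_{L^2(\mathbb{R}^d;\mu)}^2\right)
\;=\; \sum_{Q\in \mathcal{D}_1^{tr}} \mathbb{P}_{\omega_2}\{Q\,\text{is bad}\}\, \|\Delta_Q f\|_{L^2(\mathbb{R}^d;\mu)}^2
\;\le\; \delta^2 \sum_{Q\in \mathcal{D}_1^{tr}} \|\Delta_Q f\|_{L^2(\mathbb{R}^d;\mu)}^2
\;\le\; \delta^2\, \|f\|_{L^2(\mathbb{R}^d;\mu)}^2,
\]
where the last inequality uses the Parseval-type identity of Lemma \ref{Riesz}.

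Finally, by Jensen's (or Cauchy--Schwarz's) inequality applied to the square root,
\[
\mathbb{E}_{\omega_2}\!\left(\|f_{bad}\|_{L^2(\mathbb{R}^d;\mu)}\right)
\;\le\; \left( \mathbb{E}_{\omega_2}\!\left(\|f_{bad}\|_{L^2(\mathbb{R}^d;\mu)}^2\right)\right)^{1/2}
\;\le\; \delta\, \|f\|_{L^2(\mathbb{R}^d;\mu)}.
\]
This bound is independent of $\omega_1$, so averaging over $\omega_1 \in \Omega$ preserves it and gives the desired estimate \eqref{probf}. The constant $S=S(\alpha)$ here is the one already selected in Theorem \ref{badprob}, no new choice is needed.

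The only subtle point worth underlining is the measurability/independence structure: the event $\{Q\text{ is bad}\}$ must be a function of $\omega_2$ alone once $Q \in \mathcal{D}_1(\omega_1)$ is specified. This is precisely the form in which \eqref{probagain} was proved, so no additional obstacle arises; the rest is a textbook application of orthogonality and Jensen. In particular, no properties of the Calder\'on--Zygmund kernel $k$ enter this step at all.
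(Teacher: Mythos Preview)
Your proof is correct and follows essentially the same approach as the paper: orthogonality of the $\Delta_Q$ reduces $\|f_{bad}\|^2$ to a sum of squared pieces, the probability bound \eqref{probagain} handles each term, and Jensen converts the $L^2$ estimate to an $L^1$ estimate on the norm. The only cosmetic difference is that the paper applies Jensen first and then the probability bound, whereas you reverse that order; the content is identical.
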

The proof depends only on the property \eqref{probagain} and not on a particular definition of what it means to be a bad or good cube.
\begin{proof}
By Lemma \ref{Riesz} (its left inequality),
$$
\mathbb{E}(\|f_{bad}\|_{L^2(\R^d;\mu)}) \leq \mathbb{E}\Big(\sum_{Q\in\Dk_1^{tr}\cap\mathcal{B}_1}\|\Delta_Q f\|^2_{L^2(\R^d;\mu)}\Big)^{1/2}\,.
$$
Then
$$
\mathbb{E}(\|f_{bad}\|_{L^2(\R^d;\mu)}) \leq \Big(\mathbb{E}\sum_{Q\in\Dk_1^{tr}\cap\mathcal{B}_1}\|\Delta_Q f\|^2_{L^2(\R^d;\mu)}\Big)^{1/2}\,.
$$
Let $Q$ be a fixed cube in $\mathcal{D}_1$; then, using \eqref{probagain}, we conclude: 
$$
\mathbb{E}_{\om_2}\|\Delta_Q f\|^2_{L^2(\R^d;\mu)}=\mathbb{P}_{\om_2}\{Q\,\text{is bad}\} \|\Delta_Q f\|^2_{L^2(\R^d;\mu)} \leq \delta^2 \|\Delta_Q f\|^2_{L^2(\R^d;\mu)}\,.
$$
 Therefore, we can continue as follows:
 $$
 \mathbb{E}(\|f_{bad}\|_{L^2(\R^d;\mu)}) \leq \delta\Big(\sum_{Q\in\Dk_1^{tr}\cap\mathcal{B}_1}\|\Delta_Q f\|^2_{L^2(\R^d;\mu)}\Big)^{1/2}\leq \delta\|f\|_{L^2(\R^d;\mu)}\,.
 $$
 The last inequality uses Lemma \ref{Riesz} again (its right inequality).

\end{proof}

\subsection{Reduction to Estimates on Good Functions}
\label{redtogood}

We consider $\om_1,\om_2\in \Omega$, two dyadic lattices $\Dk_1$ and $\Dk_2$ corresponding to the shifts of the standard lattice by $\om_1$ and $\om_2$.  Now take two functions $f$ and $g\in L^2(\R^d;\mu)$ decomposed according to Lemma \ref{Riesz} 
$$
f=\La f + \sum_{Q\in \Dk_1^{tr}} \Delta_Q f\,,\,\,g=\La g + \sum_{R\in \Dk_2^{tr}} \Delta_R g\,.
$$
which we further decompose into $f=f_{good} + f_{bad}$, and similarly $g=g_{good}+ g_{bad}$. Let $T$ stand for any operator with a bounded kernel. In the future it can be, for example, the operator with the kernel ($\eta>0$) (and then appropriately symmetrized)
$$
k(z,w) = \frac{(\bar z -w)^2}{|z-\bar w|^4 + \eta^2}\,,\,\, z, w\in \mathbb{C}_+\,.
$$
Then 
$$
(Tf,g) = (Tf_{good}, g_{good}) + R_{(\om_1,\om_2)}(f,g)\,,\,\,\text{where}\,\, R_{(\om_1,\om_2)}(f,g)= (Tf_{bad}, g) + (Tf_{good}, g_{bad})\,.
$$

\begin{theorem}
\label{R}
Let $T$ be any operator with bounded kernel. Then
$$
\mathbb{E} |R_{(\om_1,\om_2)}(f, g)| \le 2\,\delta\|T\|_{L^2(\R^d;\mu)\to L^2(\R^d;\mu)} \|f\|_{L^2(\R^d;\mu)} \|g\|_{L^2(\R^d;\mu)}\,.
$$ 
\end{theorem}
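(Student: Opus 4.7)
The plan is to bound $R_{(\omega_1,\omega_2)}(f,g) = (Tf_{bad},g) + (Tf_{good},g_{bad})$ term by term, using only two ingredients: the boundedness of $T$ (which gives Cauchy--Schwarz-type control with the operator norm as a constant) and Theorem \ref{badprobfagain}, which controls the expected $L^2(\mu)$-norm of the bad parts.

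First I would apply the triangle inequality and the definition of the operator norm to get the pointwise (in $\omega_1,\omega_2$) bound
\begin{equation*}
|R_{(\omega_1,\omega_2)}(f,g)| \le \|T\|_{L^2(\mu)\to L^2(\mu)}\bigl(\|f_{bad}\|_{L^2(\mu)}\|g\|_{L^2(\mu)} + \|f_{good}\|_{L^2(\mu)}\|g_{bad}\|_{L^2(\mu)}\bigr).
\end{equation*}
Since the decomposition $f = \Lambda f + \sum_{Q\in \mathcal{D}_1^{tr}} \Delta_Q f$ is orthogonal by Lemma \ref{Riesz}, we have $\|f_{good}\|_{L^2(\mu)}\le \|f\|_{L^2(\mu)}$ deterministically, and similarly $\|g\|_{L^2(\mu)}$ is already deterministic. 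Therefore, taking expectations,
\begin{equation*}
\mathbb{E}|R_{(\omega_1,\omega_2)}(f,g)| \le \|T\|_{L^2(\mu)\to L^2(\mu)}\bigl(\|g\|_{L^2(\mu)}\,\mathbb{E}\|f_{bad}\|_{L^2(\mu)} + \|f\|_{L^2(\mu)}\,\mathbb{E}\|g_{bad}\|_{L^2(\mu)}\bigr).
\end{equation*}

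Next I would invoke Theorem \ref{badprobfagain}, which gives $\mathbb{E}\|f_{bad}\|_{L^2(\mu)} \le \delta \|f\|_{L^2(\mu)}$, and its symmetric counterpart $\mathbb{E}\|g_{bad}\|_{L^2(\mu)} \le \delta\|g\|_{L^2(\mu)}$ (the latter uses that the roles of $\mathcal{D}_1$ and $\mathcal{D}_2$ are symmetric, so the proof of Theorem \ref{badprobfagain} applies verbatim to $g$ with $\omega_1$ playing the role of $\omega_2$). Plugging these two estimates into the previous display immediately yields
\begin{equation*}
\mathbb{E}|R_{(\omega_1,\omega_2)}(f,g)| \le 2\delta\,\|T\|_{L^2(\mu)\to L^2(\mu)}\|f\|_{L^2(\mu)}\|g\|_{L^2(\mu)},
\end{equation*}
which is exactly the claim.

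There is essentially no hard step here; the content is entirely packaged in Theorem \ref{badprobfagain}. The only subtlety worth noting is that $f_{bad}$ actually depends on \emph{both} $\omega_1$ (through the lattice $\mathcal{D}_1$) and $\omega_2$ (through the notion of badness, which refers to skeletons of cubes in $\mathcal{D}_2$), and likewise for $g_{bad}$. This causes no trouble: Theorem \ref{badprobfagain} already delivers a bound on the full joint expectation, so no further Fubini-type argument is needed. The boundedness of the kernel of $T$ is used only to ensure $\|T\|_{L^2(\mu)\to L^2(\mu)} < \infty$ so that the pointwise-in-$\omega$ operator-norm bound above is finite and meaningful, even though ultimately we want an estimate that is independent of this a priori norm.
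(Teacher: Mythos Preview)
Your proof is correct and follows essentially the same approach as the paper: bound $|R_{(\omega_1,\omega_2)}(f,g)|$ pointwise in $(\omega_1,\omega_2)$ using the operator norm of $T$ and the fact that the good/bad parts are orthogonal projections (so $\|f_{good}\|\le\|f\|$), then take expectations and apply Theorem \ref{badprobfagain} and its symmetric counterpart. Your additional remark about the joint $(\omega_1,\omega_2)$-dependence of $f_{bad}$ is a helpful clarification that the paper leaves implicit.
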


\medskip

\begin{remark}
Notice that the estimate depends on the norm of $T$ and not on the bound on its kernel!
\end{remark}

\medskip

\begin{proof}
Observe that taking the good or bad part of a function are projections in $L^2(\R^d;\mu)$, and so they do not increase the norm.  Since we have that the operator $T$ is bounded, then
$$
|R_{(\om_1,\om_2)}(f,g)|\leq \|T\|_{L^2(\R^d;\mu)\to L^2(\R^d;\mu)}\left(\|g\|_{L^2(\R^d;\mu)}\|f_{bad}\|_{L^2(\R^d;\mu)} + \|f\|_{L^2(\R^d;\mu)}\|g_{bad}\|_{L^2(\R^d;\mu)}\right)
$$ 
Therefore, upon taking expectations we find
$$
\mathbb{E}|R_{(\om_1,\om_2)}(f,g)| \le \|T\|_{L^2(\R^d;\mu)\to L^2(\R^d;\mu)}\left(\|g\|_{L^2(\R^d;\mu)}\mathbb{E}(\|f_{bad}\|_{L^2(\R^d;\mu)}) + \|f\|_{L^2(\R^d;\mu)}\mathbb{E}(\|g_{bad}\|_{L^2(\R^d;\mu)})\right)\,.
$$
Using Theorem \ref{badprobfagain} we finish the proof.

\end{proof}

Theorem \ref{R} implies that we only need to obtain the following estimate:
\begin{equation}
\label{good}
|(Tf_{good}, g_{good})| \le C(\tau,m,d, A) \|f\|_{L^2(\R^d;\mu)}\|g\|_{L^2(\R^d;\mu)}\,.
\end{equation}
In fact, considering any operator $T$ with bounded kernel we conclude
$$
(Tf,g) = \mathbb{E}(Tf,g) = \mathbb{E}(Tf_{good}, g_{good}) + \mathbb{E} R_{(\om_1,\om_2)}(f,g)\,.
$$
Using Theorem \ref{R} and \eqref{good} we have
$$
|(Tf,g)| \le C\,\|f\|_{L^2(\R^d;\mu)}\|g\|_{L^2(\R^d;\mu)} + 2\delta\|T\|_{L^2(\R^d;\mu)\to L^2(\R^d;\mu)}\|f\|_{L^2(\R^d;\mu)}\|g\|_{L^2(\R^d;\mu)}\,.
$$
From here, taking the supremum over $f$ and $g$ in the unit ball of $L^2(\R^d;\mu)$, and choosing $\delta=\frac14$ we get
$$
\|T\|_{L^2(\R^d;\mu)\to L^2(\R^d;\mu)}\le 2C\,.
$$

\subsection{Splitting $(Tf_{good}, g_{good})$ into Three Sums}
\label{spli}

Our next reduction is to show that we can only deal with the case when $f$ and $g$ are good functions and with zero average.  To accomplish this, we will remove the term from $f_{good}$ corresponding to the term $\La$. 

We fix $\om_1,\om_2\in \Omega$, and the two corresponding dyadic lattices $\Dk_1$ and $\Dk_2$, and recall that $F=\supp\mu$ is deep inside a unit cube $Q$ and shifted unit cubes $Q^0\in \Dk_1, R^0\in \Dk_2$.

If $f\in L^2(\R^d;\mu)$, we have by the ``T1 Condition'', Condition (\ref{T1}), and an application of Cauchy--Schwarz that
$$
\|T\La f\|_{L^2(\R^d;\mu)} = \langle f\rangle_{Q^0} \|T\chi_{Q^0}\|_{L^2(\R^d;\mu)} \le A\frac{\|f\|_{L^2(\R^d;\mu)} \sqrt{\mu(Q^0))}}
{\mu(Q^0)} \sqrt{\mu(Q^0)}=A\|f\|_{L^2(\R^d;\mu)}\,.
$$

So we can replace $f$ by $f-\La f$. These computations of can course be repeated for $g$ and so from now on we may assume that
$$
\int_{\mathbb{R}^d} f(x)\,d\mu(x)=0\textnormal{ and }\int_{\mathbb{R}^d} g(x)\,d\mu(x)=0\,.
$$

We skip mentioning below that $Q\in \Dk_1^{tr}$ and $R\in \Dk_2^{tr}$ since this will always be the case.  Computing the inner product between $Tf$ and $g$ we see,
\begin{align*}
(Tf,g)= \sum_{Q \in\mathcal{G}_1, R\in \mathcal{G}_2 , \ell(Q) \le \ell(R)} (\Delta_Q f, \Delta_R g) +
\sum_{Q \in\mathcal{G}_1, R\in \mathcal{G}_2, \ell(Q) > \ell(R)} (\Delta_Q f, \Delta_R g) \,.
\end{align*}
The question of convergence of the infinite sum can be avoided here, as we can think that the decompositions of $f$ and $g$ are only into a finite sum of terms (notice that the decomposition of a good function is also good).
This removes the question of convergence and allows us to rearrange and group the terms in the sum in the way that is most convenient for our interests.

Clearly, we need to estimate only the first sum above, the second will follow by symmetry. For the sake of notational simplicity we will skip mentioning that the cubes $Q$ and $R$ are good and we will skip mentioning $\ell(Q)\le \ell(R)$. So, from now on, 
$$
\sum_{Q,R: \text{other conditions}} \textnormal{ simply means } \sum_{Q,R: \ell(Q)\le \ell(R),\, Q \in\mathcal{G}_1, \,R\in\mathcal{G}_2 , \,\text{other conditions}}\,.
$$

\medskip

\begin{remark}
It is convenient sometimes to think that the summation 
$$
\sum_{Q,R: \,\text{other conditions}}
$$
 goes over good $Q$ and \textit{all} $R$. Formally, it does not matter, since $f$ and $g$ are good functions, it is merely the matter of adding or omitting several zeros. For the symmetric sum over $Q$ and $R$ with the property $\ell(Q)>\ell(R)$ the roles of $Q$ and $R$ in this remark must be interchanged.
\end{remark}

\medskip

The definition of $\delta$-badness involved a large integer $r$, see \eqref{r}. Using this notation, we write our sum over $\ell(Q)\le \ell(R)$ as follows (we suppress $(\Delta_Qf, \Delta_R g)$ in the display below for ease of presentation):

\begin{eqnarray*}
\sum_{Q,R} (\Delta_Qf, \Delta_R g) & = & \sum_{Q,R: \ell(Q)\ge 2^{-r} \ell(R)} + \sum_{Q,R: \ell(Q)< 2^{-r} \ell(R)}\\
& = & \sum_{Q,R: \ell(Q)\ge 2^{-r} \ell(R),\,\dist(Q,R) \le \ell(R)}+ \sum_{Q,R: \ell(Q)< 2^{-r} \ell(R),\,Q\cap R\neq\emptyset}\\
& + & \bigg[\sum_{Q,R: \ell(Q)\ge 2^{-r} \ell(R),\,\dist(Q,R)>\ell(R)}+\sum_{Q,R: \ell(Q)< 2^{-r} \ell(R),\,Q\cap R=\emptyset}\bigg].
\end{eqnarray*}
We then define
\begin{eqnarray}
\label{sigma1}\sigma_1 & := & \sum_{Q,R: \ell(Q)\ge 2^{-r} \ell(R),\,\dist(Q,R) \le \ell(R)}(\Delta_Qf, \Delta_R g)\\
\label{sigma2}\sigma_2 & := & \bigg[\sum_{Q,R: \ell(Q)\ge 2^{-r} \ell(R),\,\dist(Q,R)>\ell(R)}+\sum_{Q,R: \ell(Q)< 2^{-r} \ell(R),\,Q\cap R=\emptyset}\bigg](\Delta_Qf, \Delta_R g)\\
\label{sigma3}\sigma_3 & := & \sum_{Q,R: \ell(Q)< 2^{-r} \ell(R),\,Q\cap R\neq\emptyset}(\Delta_Qf, \Delta_R g)
\end{eqnarray}

\subsection{Potential ways to estimate $\int_{\mathbb{R}^d}\int_{\mathbb{R}^d} k(x,y) f(x)g(y)\,d\mu(x)\,d\mu(y)$}
\label{threetypes}
Recall that the kernel $k(x,y)$ of $T$ satisfies the estimate
$$
|k(x,y)|\le \frac{1}{\max (d(x)^m, d(y)^m)}\,,\,\,\,d(x) =\dist (x,\R^d\setminus H)\,,
$$ $H$ being an open set in $\R^d$,
 and
$$
|k(x,y)|\le\frac{C_{CZ}}{|x-y|^m}\,.
$$
This above inequality implies that
$$
|k(x,y)-k(x',y)|\le C_{CZ}\frac{|x-x'|^\tau}{|x-y|^{m+\tau}}
$$
provided that $|x-x'|\le\tfrac12|x-y|$,with some (fixed) $0<\tau\le 1$ and $0<C_{CZ}< \infty$.
We will sometimes write 
$$
\int_{\mathbb{R}^d}\int_{\mathbb{R}^d} k(x,y) f(x)g(y)\,d\mu(x)\,d\mu(y)=\int_{\mathbb{R}^d}\int_{\mathbb{R}^d} [k(x,y)-k(x_0,y)]f(x)g(y)\,d\mu(x)\,d\mu(y)
$$
using the fact that our $f$ and $g$ will be $\Delta_Q f$ and $\Delta_R g$ and so their
integrals are zero. Temporarily call $K(x,y)$ either $k(x,y)$ or $k(x,y)-k(x_0,y)$.

After that we have three logical ways to estimate 
$$
\int_{\mathbb{R}^d}\int_{\mathbb{R}^d} K(x,y) f(x)g(y)\,d\mu(x)\,d\mu(y):
$$

\begin{itemize}
\item[(1)] Estimate $|K|$ in $L^{\infty}$, and $f, g$ in the $L^1$ the norm;
\item[(2)] Estimate $|K|$ in $L^{\infty}\times L^1$ norm, and $f$ in $L^1$ norm, $g$ in $L^{\infty}$ norm (or maybe, do this symmetrically); 
\item[(3)] Estimate $|K|$ in $L^1$ norm, and $f, g$ in the $L^{\infty}$ norm.
\end{itemize}

The third method is widely used for Calder\'on--Zygmund estimates on homogeneous spaces (say with respect to Lebesgue measure), but this estimate is very dangerous to use for non-homogeneous measures. To see why, consider the following reason.  After $f$ and $g$ are estimated in the $L^{\infty}$ norm, one needs to continue these estimates to have $L^2$ norms. There is nothing strange about this since typically $f$ and $g$ are almost proportional to characteristic functions. But, for $f$ living on $Q$ such that $f=c_Q\chi_Q$ ($c_Q$ is a constant),
$$
\|f\|_{L^{\infty}(\R^d;\mu)} \lesssim \frac1{\sqrt{\mu(Q)}}\|f\|_{L^2(\R^d;\mu)}\,.
$$
The same reasoning applies for $g$ on $R$. Then
$$
\Big|\int_{\mathbb{R}^d}\int_{\mathbb{R}^d} K(x,y) f(x)g(y)\,d\mu(x)\,d\mu(y)\Big|\lesssim \frac1{\sqrt{\mu(Q)}\sqrt{\mu(R)}}\|f\|_{L^2(\R^d;\mu)}\|g\|_{L^2(\R^d;\mu)}\,.
$$
However, a non-homogeneous measure has no estimate from below and having two uncontrollable almost zeroes in the denominator is a very bad idea.  We will never use the estimate of type (3). 

On the other hand, estimates of type (2) are much less dangerous (although care is still required).  Because, in this case one applies 
 $$
 \|f\|_{L^1(\R^d;\mu)} \lesssim \sqrt{\mu(Q)}\|f\|_{L^2(\R^d;\mu)},\quad \|g\|_{L^{\infty}(\R^d;\mu)} \lesssim\frac1{\sqrt{\mu(R)}}\|g\|_{L^2(\R^d;\mu)}\,,
$$
 and gets $$\Big|\int_{\mathbb{R}^d}\int_{\mathbb{R}^d} K(x,y) f(x)g(y)\,d\mu(x)\,d\mu(y)\Big| \lesssim\frac{\sqrt{\mu(Q)}}{\sqrt{\mu(R)}}\|f\|_{L^2(\R^d;\mu)}\|g\|_{L^2(\R^d;\mu)}\,.
 $$
 If we choose to use estimates of type (2) only for pairs $Q,R$ such that $Q\subset R$ we are in good shape. This is what we will be doing when estimating $\sigma_3$.
 
Finally, estimates of type (1) yield the following
$$
\Big|\int_{\mathbb{R}^d}\int_{\mathbb{R}^d} K(x,y) f(x)g(y)\,d\mu(x)\,d\mu(y)\Big|\lesssim \sqrt{\mu(Q)}\sqrt{\mu(R)}\|f\|_{L^2(\R^d;\mu)}\|g\|_{L^2(\R^d;\mu)}\,.
$$
These present no problems from the measure $\mu$.  But, estimates of this type will only be good when we can exploit decay from the cubes $R$ and $Q$ being far apart.  This will be the strategy for handling term $\sigma_2$.
 
\bigskip

\noindent{\bf Plan of the Proof:} The first sum is the ``diagonal" part of the operator, $\sigma_1$. The second sum, $\sigma_2$ is the ``long range interaction".  The final sum, $\sigma_3$, is the ``short range interaction".  The diagonal part will be estimated using our $T1$ assumption of Theorem \ref{T1}, for the long range interaction we will use the first type of estimates described above, for the short range interaction we will use estimates of types (1) and (2) above. But, all this will be done carefully!

\section{The Long Range Interaction: Controlling Term $\sigma_2$}
\label{lr}

We first prove a lemma that demonstrates that for functions with supports that are far apart, we have some good control on the bilinear form induced by our \cz\, operator $T$.  For two dyadic cubes $Q$ and $R$, we set
$$
D(Q,R):=\ell(Q)+\ell(R)+\dist(Q,R).
$$ 

\begin{lemma} 
  \label{FarInteractionLemma}
  Suppose that $Q$ and $R$ are two cubes in $\R^d$, such that
  $\ell(Q) \leq \ell(R)$. Let $\vf\ci Q,\psi\ci R\in L^2(\R^d;\mu)$. Assume that $\vf\ci Q$ vanishes outside $Q$, $\psi\ci R$
  vanishes outside $R$; $\int_{\R^d}\vf\ci Qd\mu=0$ and, last, $\dist(Q,\supp\psi\ci R)\ge \ell(Q)^{\al}\ell(R)^{1-\al}$. Then
  $$|( \vf\ci Q,T\psi\ci R )|\le A\,C\,\frac{\ell(Q)^{\frac{\tau}{2}}\ell(R)^{\frac{\tau}{2}}}{D(Q,R)^{m+\tau}}\sqrt{\mu(Q)}\sqrt{\mu(R)}\|\vf\ci Q\|\ci{L^2(\R^d;\mu)}\|\psi\ci R\|\ci{L^2(\R^d;\mu)}.
  $$
  \end{lemma}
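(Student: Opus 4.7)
\medskip

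\textbf{Proof proposal.} The strategy is to exploit the mean-zero property of $\varphi\ci Q$, apply the H\"older-type smoothness of the Calder\'on--Zygmund kernel, and then convert the resulting pointwise bound into an $L^2$-estimate via Cauchy--Schwarz. Let $x_0$ denote the center of $Q$. Since $\int\varphi\ci Q\,d\mu=0$, I rewrite
\[
(\varphi\ci Q,T\psi\ci R)=\int\!\!\int [k(x,y)-k(x_0,y)]\,\varphi\ci Q(x)\psi\ci R(y)\,d\mu(x)\,d\mu(y).
\]
For $x\in Q$ and $y\in\supp\psi\ci R$, the separation hypothesis gives $|x-y|\ge \ell(Q)^{\alpha}\ell(R)^{1-\alpha}\ge \ell(Q)$, which, up to a harmless dimensional factor, makes $|x-x_0|\le \tfrac12|x-y|$, so that the smoothness estimate of $k$ applies and yields
\[
|k(x,y)-k(x_0,y)|\le C\,C_{CZ}\,\frac{\ell(Q)^{\tau}}{|x-y|^{m+\tau}}.
\]

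The heart of the argument is to convert the right-hand side into the announced bound involving $D(Q,R)^{m+\tau}$. I split into two cases according to the size of $\dist(Q,R)$ relative to $\ell(R)$. If $\dist(Q,R)\ge \ell(R)$, then $D(Q,R)\approx\dist(Q,R)\approx |x-y|$ and, since $\ell(Q)\le\ell(R)$, I can factor $\ell(Q)^{\tau}\le\ell(Q)^{\tau/2}\ell(R)^{\tau/2}$, which gives the required inequality directly. If $\dist(Q,R)<\ell(R)$, then $D(Q,R)\approx\ell(R)$, and I use the stronger lower bound $|x-y|\ge \ell(Q)^{\alpha}\ell(R)^{1-\alpha}$ together with the specific choice $\alpha=\tau/(2\tau+2m)$, for which $\alpha(m+\tau)=\tau/2$. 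Then
\[
\frac{\ell(Q)^{\tau}}{|x-y|^{m+\tau}}\le \frac{\ell(Q)^{\tau}}{\ell(Q)^{\alpha(m+\tau)}\ell(R)^{(1-\alpha)(m+\tau)}}=\frac{\ell(Q)^{\tau/2}}{\ell(R)^{m+\tau/2}}=\frac{\ell(Q)^{\tau/2}\ell(R)^{\tau/2}}{\ell(R)^{m+\tau}},
\]
which again matches the desired bound. So in both cases one obtains uniformly in $x\in Q$, $y\in\supp\psi\ci R$
\[
|k(x,y)-k(x_0,y)|\le C\,\frac{\ell(Q)^{\tau/2}\ell(R)^{\tau/2}}{D(Q,R)^{m+\tau}}.
\]

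To conclude, I plug this pointwise bound into the integral representation and use Cauchy--Schwarz with respect to $\mu$, namely
\[
\|\varphi\ci Q\|\ci{L^1(\mu)}\le \sqrt{\mu(Q)}\,\|\varphi\ci Q\|\ci{L^2(\mu)}, \qquad \|\psi\ci R\|\ci{L^1(\mu)}\le \sqrt{\mu(R)}\,\|\psi\ci R\|\ci{L^2(\mu)},
\]
which produces exactly the right-hand side of the lemma. The main technical point is the case split and the verification that the choice $\alpha=\tau/(2\tau+2m)$ is calibrated so that the power gained from smoothness compensates the weaker lower bound on $|x-y|$ available in the ``diagonal'' regime $\dist(Q,R)\le\ell(R)$; neither the T$1$ hypothesis nor the Bergman-type estimate on $k$ is needed here, because the mean-zero cancellation together with H\"older smoothness already suffices.
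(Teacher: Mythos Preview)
Your proof is correct and follows essentially the same approach as the paper: subtract the kernel at the center of $Q$ using the mean-zero hypothesis, invoke the H\"older smoothness of $k$, split into the two regimes $\dist(Q,R)\ge\ell(R)$ and $\dist(Q,R)<\ell(R)$ (the paper splits on $\dist(Q,\supp\psi\ci R)$ instead, which is equivalent for the purpose here), and in the second regime use the calibration $\alpha(m+\tau)=\tau/2$ before finishing with Cauchy--Schwarz. The only cosmetic difference is that the paper bounds the kernel directly by $C_{CZ}\,\ell(Q)^\tau/\dist(Q,\supp\psi\ci R)^{m+\tau}$ before splitting cases, whereas you keep the pointwise $|x-y|^{m+\tau}$ in the denominator; both lead to the same estimate.
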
 
  
\medskip  
  
\begin{remark}
  Note that we require only that the support of the function $\psi_R$ lies far from $Q$; the cubes $Q$ and $R$ themselves may intersect! This situation will really occur when estimating $\sigma_2$.
\end{remark}  

\medskip

  \begin{proof}

Let $x\ci Q$ be the center of the cube $Q$. Note that for all $x\in Q$,
$y\in \supp\psi\ci R$, we have
$$
|x\ci Q-y|\ge \frac{\ell(Q)}{2}+\dist(Q,\supp\psi\ci R)\ge
\frac{\ell(Q)}{2} + 2^{r(1-\al)}\ell(Q)\geq C(r,\alpha)\ell(Q)\ge 2|x-x\ci Q|.
$$
Therefore,
\begin{align*}
|( \vf\ci Q, T \psi\ci R )|&=
\Bigl|\int_{\mathbb{R}^d}\int_{\mathbb{R}^d} k(x,y)\vf\ci Q(x)\psi\ci R(y)\, d\mu(x)\, d\mu(y)\Bigr|\\
&=
\Bigl|\int_{\mathbb{R}^d}\int_{\mathbb{R}^d} [k(x,y)-k(x\ci Q,y)]
\vf\ci Q(x)\psi\ci R(y)\, d\mu(x)\, d\mu(y)\Bigr|\\
&\leq
C_{CZ}\frac{\ell(Q)^\tau}{\dist(Q,\supp\psi\ci R)^{m+\tau}}
\|\vf\ci Q\|\ci{L^1(\R^d;\mu)}\|\psi\ci R\|\ci{L^1(\R^d;\mu)}.
\end{align*}
There are two possible cases.

{\bf Case 1: }The cubes $Q$ and $R$ satisfy $\dist(Q,\supp\psi\ci R)\ge \ell(R)$.  If this holds, then we have 
$$
D(Q,R):=\ell(Q)+\ell(R)+\dist(Q,R)\le
3\dist(Q,\supp\psi\ci R)
$$
and therefore
$$
\frac{\ell(Q)^\tau}{\dist(Q,\supp\psi\ci R)^{m+\tau}}\le C\,
\frac{\ell(Q)^\tau}{D(Q,R)^{m+\tau}}\le
C\,\frac{\ell(Q)^{\frac{\tau}{2}}\ell(R)^{\frac{\tau}{2}}}{D(Q,R)^{m+\tau}}.
$$

{\bf Case 2:}  The cubes $Q$ and $R$ satisfy $\ell(Q)^{\al}\ell(R)^{1-\al}\le \dist(Q,\supp\psi\ci R)\le \ell(R)$.

Then $D(Q,R)\le 3\ell(R)$ and we get
$$
\frac{\ell(Q)^\tau}{\dist(Q,\supp\psi\ci R)^{m+\tau}}\le
\frac{\ell(Q)^\tau}{[\ell(Q)^{\al}\ell(R)^{1-\al}]^{m+\tau}}=
\frac{\ell(Q)^{\frac{\tau}{2}}\ell(R)^{\frac{\tau}{2}}}{\ell(R)^{m+\tau}}\le
C\,\frac{\ell(Q)^{\frac{\tau}{2}}\ell(R)^{\frac{\tau}{2}}}{D(Q,R)^{m+\tau}}.
$$
The computation of the equality at this point, uses the particular choice of $\alpha=\frac{\tau}{2(\tau+m)}$.  Now, to finish the proof of the lemma, it remains only to note that
$$
\|\vf\ci Q\|\ci{L^1(\R^d;\mu)}\le
\sqrt{\mu(Q)}\|\vf\ci Q\|\ci{L^2(\R^d;\mu)}
\text{\qquad and\qquad }
\|\psi\ci R\|\ci{L^1(\R^d;\mu)}\le
\sqrt{\mu(R)}\|\psi\ci R\|\ci{L^2(\R^d;\mu)}.
$$

\end{proof}

Applying this lemma to $\vf\ci Q=\Delta\ci Q f$ and $\psi\ci R=\Delta\ci R g$, we
obtain
\begin{equation}
\label{twostar} 
|\sigma_2|\le
C\,
\sum_{Q,R}
\frac{\ell(Q)^{\frac{\tau}{2}}\ell(R)^{\frac{\tau}{2}}}{D(Q,R)^{m+\tau}}
\sqrt{\mu(Q)}\sqrt{\mu(R)}\|\Delta\ci Q f\|\ci{L^2(\R^d;\mu)}
\|\Delta\ci R g\|\ci{L^2(\R^d;\mu)}\,.
\end{equation}

\smallskip

If we let $T\ci{Q,R}$ denote the matrix defined by
$$
T\ci{Q,R}:=
\frac{\ell(Q)^{\frac{\tau}{2}}\ell(R)^{\frac{\tau}{2}}}{D(Q,R)^{m+\tau}}
\sqrt{\mu(Q)}\sqrt{\mu(R)}\qquad (Q\in\mathcal{D}_1^{tr}  ,\,
R\in\mathcal{D}_2^{tr}  ,\,\ell(Q)\leq\ell(R)\,)
$$
then we will show that the corresponding operator generates a bounded operator in $l^2$.  This then will allow us to control the term $|\sigma_2|$.  Since if the operator $T\ci{Q,R}$ is bounded on $l^2$ we have
\begin{eqnarray*}
|\sigma_2| & \leq & \| T\ci{Q,R}\|_{l^2\to l^2}\left[\sum_{Q}\|\Delta\ci{Q} f\|_{L^2(\R^d;\mu)}^2\right]^{\frac 12}\left[\sum_{R}\|\Delta\ci{R} g\|_{L^2(\R^d;\mu)}^2\right]^{\frac 12}\\
 & \leq &  \| T\ci{Q,R}\|_{l^2\to l^2}\|f\|_{L^2(\R^d;\mu)}\|g\|_{L^2(\R^d;\mu)}.
\end{eqnarray*}
The last inequality follows by Lemma \ref{Riesz}.

\begin{lemma}
\label{TQR}
For any two
families  $\{a\ci Q\}\ci{Q\in\mathcal{D}_1^{tr}  }$ and $\{b\ci R\}\ci{R\in\mathcal{D}_2^{tr}  }$
of nonnegative numbers, one has
$$
\sum_{Q,R}T\ci{Q,R}a\ci Q b\ci R\le
A\,C\,\Bigl[\sum_{Q}a\ci Q^2\Bigr]^{\frac12}\Bigl[\sum_{R}b\ci R^2\Bigr]^{\frac12}.
$$
\end{lemma}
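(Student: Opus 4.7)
The plan is to establish the $\ell^2$-boundedness of the matrix $T_{Q,R}$ by applying Schur's test with the weights
$$\psi_Q := \sqrt{\mu(Q)}\quad(Q\in\mathcal{D}_1^{tr}),\qquad \phi_R := \sqrt{\mu(R)}\quad(R\in\mathcal{D}_2^{tr}).$$
These are strictly positive, since every transit cube has $\mu(\cdot)>0$ by definition. It then suffices to establish the two dual inequalities
\begin{equation}
\sum_{R:\,\ell(R)\ge\ell(Q)} T_{Q,R}\,\phi_R \;\le\; C\,\psi_Q\quad\forall Q,\qquad \sum_{Q:\,\ell(Q)\le\ell(R)} T_{Q,R}\,\psi_Q \;\le\; C\,\phi_R\quad\forall R. \label{eq:plan-schur}
\end{equation}

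After substituting the definition of $T_{Q,R}$ and pulling out the prefactors, both inequalities in \eqref{eq:plan-schur} reduce, via a geometric series in the relative scale, to the single-scale estimate: for any transit cube $Q\in\mathcal{D}_1^{tr}$ and any dyadic scale $\rho\ge\ell(Q)$,
\begin{equation}
\sum_{R\in\mathcal{D}_2^{tr},\,\ell(R)=\rho}\frac{\mu(R)}{D(Q,R)^{m+\tau}}\;\le\;C\,\rho^{-\tau}. \label{eq:plan-key}
\end{equation}
(The second inequality in \eqref{eq:plan-schur} needs the analogous estimate with the roles of $Q$ and $R$ exchanged, proved by the same argument but with the ball-Ahlfors bound extracted from $R$ being transit rather than $Q$.) Granting \eqref{eq:plan-key}, the first inequality of \eqref{eq:plan-schur} follows by writing $\sum_R T_{Q,R}\phi_R = \sqrt{\mu(Q)}\,\ell(Q)^{\tau/2}\sum_{n\ge 0}\rho_n^{\tau/2}\sum_{R:\ell(R)=\rho_n}\mu(R)/D(Q,R)^{m+\tau}$ with $\rho_n=2^n\ell(Q)$; the inner sum is at most $C\rho_n^{-\tau}$ by \eqref{eq:plan-key}, and the geometric series $\sum_{n\ge 0}2^{-n\tau/2}$ then produces $C\sqrt{\mu(Q)}=C\psi_Q$. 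The symmetric computation handles the second inequality.

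To prove \eqref{eq:plan-key}, observe that the cubes of $\mathcal{D}_2^{tr}$ of a common side-length $\rho$ are pairwise disjoint and, for $y\in R$, $D(Q,R)\simeq \rho+\dist(y,Q)$. Hence the left-hand side of \eqref{eq:plan-key} is comparable to
$\int (\rho+\dist(y,Q))^{-m-\tau}\,d\mu(y)$. Splitting this integral into dyadic annuli $A_j := B(Q,2^{j+1}\rho)\setminus B(Q,2^j\rho)$ for $j\ge 0$, together with $A_{-1} := B(Q,\rho)$, controls it by $\sum_{j\ge -1}\mu(A_j)/(2^j\rho)^{m+\tau}$. The crucial input is the ball-Ahlfors bound
$$\mu(B(Q,r))\;\le\;C\,r^m\qquad\forall r\ge\ell(Q),$$
which I derive as follows: since $Q$ is transit, $2Q\not\subset H$, so there exists $x_0\in 2Q\setminus H$; for $r\ge\ell(Q)$ one has $B(Q,r)\subset B(x_0,Cr)$, and $B(x_0,Cr)$ cannot lie in $H$ because it contains the point $x_0$, so the global hypothesis of Theorem~\ref{md} (``non-Ahlfors balls lie in $H$'') forces $\mu(B(x_0,Cr))\le(Cr)^m$. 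With $\mu(A_j)\le C(2^j\rho)^m$, the $m$-powers cancel the denominator and the remaining geometric series $\sum_{j\ge-1}2^{-j\tau}$ yields \eqref{eq:plan-key}.

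The main obstacle is precisely the ball-Ahlfors step: the non-Ahlfors hypothesis of Theorem~\ref{md} only controls balls that are \emph{not} contained in $H$, so the transit condition on $Q$ must be used concretely to exhibit a witness $x_0\in 2Q\setminus H$ that locates $B(Q,r)$ inside a comparable ball on which the hypothesis applies. This is exactly the mechanism by which the Bergman-type structure rescues the argument in the presence of genuinely non-homogeneous $\mu$; everything else is routine dyadic summation, with the geometric decay supplied by the Calder\'on--Zygmund smoothness parameter $\tau$.
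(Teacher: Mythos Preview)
Your proof is correct and uses essentially the same ingredients as the paper's: a Schur test, a slicing by the scale ratio $\ell(Q)/\ell(R)$, and the observation that the transit condition supplies the Ahlfors bound $\mu(B(\cdot,r))\le Cr^m$ needed for the dyadic annulus estimate. The only difference is organizational: the paper slices first by $k$ (the scale ratio) and then by $j$ (the scale of $R$), rewrites each block as an integral operator via auxiliary functions $F,G$, and applies the Schur test with constant weights to the resulting kernel $k_j(x,y)=2^{j\tau}/(2^j+|x-y|)^{m+\tau}$; you instead apply the Schur test directly to the matrix with weights $\sqrt{\mu(Q)},\sqrt{\mu(R)}$ and only then slice by scale, which bypasses the $F,G$ construction and is a bit more direct. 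One minor point of phrasing: in your reduction of \eqref{eq:plan-key} to the integral, ``comparable to'' should be ``bounded above by a constant times,'' since you are summing only over transit $R$; this is of course the direction you need.
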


\medskip

\begin{remark}
Note that $T\ci{Q,R}$ are defined for all $Q$ and $R$ with $\ell(Q)\le \ell(R)$
and that the conditions $\dist(Q,R)\ge \ell(Q)^{\al}\ell(R)^{1-\al}$ (or even the condition
$Q\cap R=\emptyset$) no longer appears in the summation!
\end{remark}

\medskip

\begin{proof}
Let us ``slice'' the matrix $T\ci{Q,R}$ according to the ratio $\frac{\ell(Q)}{\ell(R)}$. Namely, let
$$
T^{(k)}_{Q,R}:=\left\{\aligned T\ci{Q,R}&\qquad\text{if } \ell(Q)=2^{-k}\ell(R)\,;\\0&\qquad\text{otherwise}\,,\endaligned\right.
$$
($k=0,1,2,\dots$).
To prove the lemma, it is enough to show that for every $k\ge 0$,
$$
\sum_{Q,R}T^{(k)}\ci{Q,R}a\ci Q b\ci R\le C\,2^{-\frac{\tau}{2}k}\Bigl[\sum_{Q}a\ci Q^2\Bigr]^{\frac12}\Bigl[\sum_{R}b\ci R^2\Bigr]^{\frac12}.
$$
The matrix $\{T^{(k)}_{Q,R}\}$ has a ``block'' structure: the terms $b\ci R$ corresponding to the cubes $R\in\mathcal{D}_2^{tr}$ for which $\ell(R)=2^{j}$ can interact only with the terms $a\ci Q$ corresponding to the cubes $Q\in\mathcal{D}_1^{tr} $, for which $\ell(Q)=2^{j-k}$. Thus, to get the desired inequality, it is enough to estimate each block separately, i.e., to demonstrate that
$$
\sum_{Q,R\,:\,\ell(Q)=2^{j-k},\ell(R)=2^j}T^{(k)}\ci{Q,R}a\ci Q b\ci R\leq C\,\Bigl[\sum_{Q\,:\,\ell(Q)=2^{j-k}}a\ci Q^2\Bigr]^{\frac12}
\Bigl[\sum_{R\,:\,\ell(R)=2^j}b\ci R^2\Bigr]^{\frac12}.
$$
Let us introduce the functions
$$
F(x):=\sum_{Q\,:\,\ell(Q)=2^{j-k}}\frac{a\ci Q}{ \sqrt{\mu(Q)} }
\chi\ci Q(x)
\qquad
\text{and}
\qquad
G(x):=\sum_{R\,:\,\ell(R)=2^{j}}\frac{b\ci R}{ \sqrt{\mu(R)} }
\chi\ci R(x).
$$
Note that the cubes of a given size in one dyadic lattice do not intersect,
and therefore at each point $x\in\R^d$, at most one term in the sum can be
non-zero. Also observe that
$$
\|F\|\ci{L^2(\R^d;\mu)}=
\Bigl[\sum_{Q\,:\,\ell(Q)=2^{j-k}}a\ci Q^2\Bigr]^{\frac12}
\qquad
\text{and}
\qquad
\|G\|\ci{L^2(\R^d;\mu)}=
\Bigl[\sum_{R\,:\,\ell(R)=2^{j}}b\ci R^2\Bigr]^{\frac12}.
$$
Then the estimate we need can be rewritten as
$$
\int_{\mathbb{R}^d}\int_{\mathbb{R}^d} K_{j,k}(x,y)F(x)G(y)\, d\mu(x)\,d\mu(y) \le
C\,
\|F\|\ci{L^2(\R^d;\mu)}\|G\|\ci{L^2(\R^d;\mu)},
$$
where
$$
K_{j,k}(x,y)=\sum_{Q,R\,:\,\ell(Q)=2^{j-k}, \ell(R)=2^j}
\frac{\ell(Q)^{\frac{\tau}{2}}\ell(R)^{\frac{\tau}{2}}}{D(Q,R)^{m+\tau}}\chi\ci Q(x)\chi\ci
R(y).
$$
Again, for every pair of points $x,y\in \R^d$, only one term in the sum can be
nonzero.
Since $|x-y|+\ell(R)\le 3D(Q,R)$ for any $x\in Q$, $y\in R$, we obtain
\begin{align*}
K_{j,k}(x,y)&=
C\,2^{-\frac{\tau}{2}k}\frac{ \ell(R)^{\tau} }{ D(Q,R)^{m+\tau}}\\
&\leq
C\,2^{-\frac{\tau}{2}k}
\frac{2^{j\tau}}{[2^j+|x-y|]^{m+\tau}}
=:C\,2^{-\frac{\tau}{2}k} k_j(x,y).
\end{align*}
So, it is enough to check that
$$
\int_{\mathbb{R}^d}\int_{\mathbb{R}^d} k_{j}(x,y)F(x)G(y)\, d\mu(x)\,d\mu(y)
\le
C\,
\|F\|\ci{L^2(\R^d;\mu)}\|G\|\ci{L^2(\R^d;\mu)}.
$$
Recall that  we called the balls ``non-Ahlfors balls" if
$$
\mu(B(x,r)) >r^m\,.
$$
According to the Schur test, it would suffice to prove that
for every $y\in \R^d$, one has the estimate $\int_{\R^d}k_j(x,y)\,d\mu(x)\le C$ and vice versa (i.e., for every $x\in \R^d$, one has $\int_{\R^d}k_j(x,y)\,d\mu(y)\le C$). Then the norm of the integral operator with kernel $k_j$ in $L^2(\R^d;\mu)$ would be bounded by the same constant $C$, and the proof of Lemma \ref{TQR} would be over.
If we assumed a priori that the supremum of radii of all  non-Ahlfors balls  centered at $y\in R$ with $\ell(R) =2^j,$ were less than $2^{j+1}$, then the needed estimate would be immediate. In fact, we could write
\begin{align*}
\int_{\R^d}k_j(x,y)\,d\mu(x)&=\int_{B(y,2^{j+1})}k_j(x,y)\,d\mu(x)+\int_{\R^d\setminus B(y,2^{j+1})}k_j(x,y)\,d\mu(x)\\&\lesssim 2^{-jm}\mu(B(y,2^{j+1}))+\int_{\R^d\setminus B(y,2^{j+1})}\frac{2^{j\tau}}{|x-y|^{m+\tau}}\,d\mu(x)\\&
\lesssim \Bigl(1 + \int_{2^j}^{+\infty}\frac{2^{j\tau}t^{m-1}}{t^{m+\tau}}dt\Bigr)=C\,.
\end{align*}

The problem is that we cannot guarantee that the supremum of radii of all  non-Ahlfors balls  centered at $y$
be less than $2^{j+1}$ for every $y\in\R^d$. Our measure may not have this uniform property.

So, generally speaking, we are unable to show that the
integral operator with kernel $k_j(x,y)$ acts in $L^2(\R^d;\mu)$. But we {\it do not need} that much! We only need to check that the corresponding bilinear form is bounded on two {\it given} functions $F$ and $G$. So, we are not interested in the points $y\in\R^d$ for which $G(y)=0$ (or in the points $x\in\R^d$, for which $F(x)=0$). But, by  definition, $G$ can be non-zero only on transit cubes in $\mathcal{D}_2$. 
Here we use our convention that we  omit in all sums the fact that $Q,R$ are transit cubes. But they are!

Now let us notice that if (and this is the case for all $R$ in the sum we estimate in our lemma) $R\in\mathcal{D}_2^{tr}  $, then the supremum of radii of all non-Ahlfors balls  centered at $y\in R$ is bounded by $c(d)\ell(R)$ for every ${y\in R}$. Indeed, this is just Lemma \ref{obv2}. The same reasoning shows that if $Q\in\mathcal{D}_1^{tr}  $, then the supremum of radii of all non-Ahlfors balls  centered at $x\in Q$ is bounded by $2^{j-k+1}\lesssim 2^{j+1}$ whenever $F(x)\ne 0$, and we are done with Lemma \ref{TQR}.
\end{proof}

Now, we hope, the reader will agree that the decision to declare the cubes contained in $H$ terminal was a good one.  As a result, the fact that the measure $\mu$ is not Ahlfors did not put us in any real trouble -- we barely had a chance to notice this fact at all. But, it still remains to explain why we were so eager to have the extra condition
$$
|k(x,y)|\le \frac{1}{\max (d(x)^m, d(y)^m)}\,,\,\, d(x):= \dist (x, \R^d\setminus H)
$$
 on our Calder\'on--Zygmund kernel.
  The answer is found in the next two sections.

  \section{Short Range Interaction and Nonhomogeneous Paraproducts: Controlling Term $\sigma_3$.}
  \label{shortrange}
  
  Recall that the sum $\sigma_3$ is taken over the pairs $Q,R$, for which $\ell(Q)<2^{-r}  \ell(R)$ and $Q\cap R\ne\emptyset$. We would like to improve this condition and demand that $Q$ lie ``deep inside'' one of the $2^d$ subcubes $R_j$ ($j=1,\ldots, 2^d$).
  Recall also that we defined the {\it skeleton} $sk\, R$ of the cube $R$ by
  $$
  sk\, R:=\bigcup_{j=1}^{2^d}\partial R_j.
  $$
  We have declared a cube $Q\in\mathcal{D}_1$ bad if there exists a cube $R\in\mathcal{D}_2$
  such that $\ell(R)>2^r \ell(Q)$ and $\dist(Q, sk\, R)\le\ell(Q)^{\al}\ell(R)^{1-\al}$. Now, for every good cube $Q\in\mathcal{D}_1$, the conditions $\ell(Q)<2^{-r}  \ell(R)$ and $Q\cap R\ne \emptyset$
together imply
 that $Q$ lies inside one of the $2^d$ children $R_j$ of $R$. We will denote
this subcube by $R\ci Q$. The sum $\sigma_3$ can now be split into
$$
\sigma_3^{term}:=\sum_{ Q,R\,:\,Q\subset R,\, \ell(Q)<2^{-r}  \ell(R),
                    \\ R\ci Q\text{ is terminal}}
(\Delta\ci Q f, T\Delta\ci R g)
$$
and
$$
\sigma_3^{tr}  :=\sum_{ Q,R\,:\,Q\subset R,\, \ell(Q)<2^{-r}  \ell(R),
                    \\  R\ci Q\text{ is transit} }
(\Delta\ci Q f, T\Delta\ci R g).
$$

\smallskip

\subsection{ Estimation of $\sigma_3^{term}$.}
\label{sigmaterm}

First of all, write (recall that $R_j$ denotes a child of $R$):
$$
\sigma_3^{term}=\sum_{j=1}^{2^{n}}\,\,
\sum_{ Q,R\,:\,\ell(Q)<2^{-r}  \ell(R),
\\
Q\subset R_j\in\mathcal{D}_2^{term}}
(\Delta\ci Q f, T\Delta\ci R g).
$$
Clearly, it is enough to estimate the inner sum for every fixed $j=1,\dots ,2^d$. Let us
do this for $j=1$. We have
$$
\sum_{ Q,R\,:\,\ell(Q)<2^{-r}  \ell(R), \\ Q\subset
R_1\in\mathcal{D}_2^{term}}
(\Delta\ci Q f, T\Delta\ci R g)=
\sum_{R:R_1\in\mathcal{D}_2^{term}}\,\, \sum_{ Q:\,\ell(Q)<2^{-r}  \ell(R), 
Q\subset R_1}
(\Delta\ci Q f, T\Delta\ci R g).
$$
Recall that the kernel $k$ of our operator $T$ satisfies
the estimate of Lemma \ref{obv1}
\begin{equation}
\label{konwholeR1}
|k(x,y)|  \le\frac{1}{\ell(R)^m}\qquad\text{for all }
x\in R_1, y\in\R^d.
\end{equation}
Hence,
\begin{equation}
\label{TonwholeR1}
|T\Delta\ci R g(x)|\le\frac{\|\Delta\ci R g\|\ci{L^1(\R^d;\mu)}}{\ell(R)^m}
\qquad\text{ for all }x\in R_1,
\end{equation}
and therefore
\begin{align*}
\|\chi\ci{R_1}\cdot T\Delta\ci R g \|\ci{L^2(\R^d;\mu)}&\le
\|\Delta\ci R g\|\ci{L^1(\R^d;\mu)}\frac{\sqrt{\mu(R_1)}}{\ell(R)^m}\\
&\le
\frac{{\mu(R)}}{\ell(R)^m}\|\Delta\ci R g\|\ci{L^2(\R^d;\mu)}\le
C\|\Delta\ci R g \|\ci{L^2(\R^d;\mu)}.
\end{align*}
This follows because $\|\Delta\ci R g \|\ci{L^1(\R^d;\mu)}
\le \sqrt{\mu(R)} \|\Delta\ci R g \|\ci{L^2(\R^d;\mu)}$ and $\mu(R_1)\le \mu(R)$ hold trivially.  Additionally, we used the observation that, by Lemma \ref{obv2} we have 
\begin{equation}
\label{transitagain}
\mu(R)\le C(d)\ell(R)^m
\end{equation}
because $R$ (the father of the cube $R_1$) is a transit cube if $R_1$ is terminal.

Now, recalling Lemma \ref{Riesz}, and taking into account that
$\Delta\ci Q f\equiv 0$ outside $Q$, we get
\begin{align*}
\sum_{Q:\,Q\subset
R_1}&
|(\Delta\ci Q f, T\Delta\ci R g)|=
\sum_{Q:\,Q\subset
R_1}
|(\Delta\ci Q f, \chi\ci{R_1}\cdot T\Delta\ci R g)|\\
&\leq C\,\|\chi\ci{R_1}\cdot T\Delta\ci R g \|\ci{L^2(\R^d;\mu)}
\Bigl[\sum_{Q:\,Q\subset
R_1}\|\Delta\ci Q f
\|^2\ci{L^2(\R^d;\mu)}\Bigr]^{\frac{1}{2}}\\
&\leq C\,\|\Delta\ci R g \|\ci{L^2(\R^d;\mu)}
\Bigl[\sum_{Q:\,Q\subset
R_1}\|\Delta\ci Q f
\|^2\ci{L^2(\R^d;\mu)}\Bigr]^{\frac{1}{2}}.
\end{align*}
So, we obtain
$$
\sum_{R:\,R_1\in\mathcal{D}_2^{term}}\,
\sum_{Q:\,Q\subset R_1}
|(\Delta\ci Q f, T\Delta\ci R g)|
$$
$$\leq C\,
\sum_{R:\,R_1\in\mathcal{D}_2^{term}}
\|\Delta\ci R g \|\ci{L^2(\R^d;\mu)}
\Bigl[\sum_{Q:\,Q\subset R_1}
\|\Delta\ci Q f
\|^2\ci{L^2(\R^d;\mu)}\Bigr]^{\frac{1}{2}}
$$
$$
\leq C\,
\Bigl[\sum_{R:\,R_1\in\mathcal{D}_2^{term}}
\|\Delta\ci R g \|^2\ci{L^2(\R^d;\mu)}\Bigr]^{\frac12}
\Bigl[\sum_{R:\,R_1\in\mathcal{D}_2^{term}}\,\,
\sum_{Q:\,Q\subset R_1}
\|\Delta\ci Q f
\|^2\ci{L^2(\R^d;\mu)}\Bigr]^{\frac{1}{2}}.
$$
But the terminal cubes in $\mathcal{D}_2$ do not intersect! Therefore every
$\Delta\ci Q f$ can appear at most once in the last double sum, and we get the bound 
\begin{multline*}
\sum_{R:\,R_1\in\mathcal{D}_2^{term}}\sum_{Q:\,Q\subset R_1}|(\Delta\ci Q f, T^*\Delta\ci R g)|\\\le C\,\Bigl[\sum_{R}\|\Delta\ci R g \|^2\ci{L^2(\R^d;\mu)}\Bigr]^{\frac12}\Bigl[\sum_{Q}\|\Delta\ci Q f\|^2\ci{L^2(\R^d;\mu)}\Bigr]^{\frac{1}{2}}\le C\,\|f\|\ci{L^2(\R^d;\mu)}\|g\|\ci{L^2(\R^d;\mu)}.
\end{multline*}
Lemma \ref{Riesz} has been used again in the last inequality.

\subsection{Estimation of $\sigma_3^{tr}$}
\label{s3transit}
 
 Recall that
 $$
 \sigma_3^{tr}  =\sum_{Q,R\,:\,Q\subset R,\, \ell(Q)<2^{-r}  \ell(R),\\  R\ci Q\text{ is transit}}(\Delta\ci Q f, T^*\Delta\ci R g).
 $$
 Split every term in the sum as
 $$
 (\Delta\ci Q f, T\Delta\ci R g)=(\Delta\ci Q f, T(\chi\ci{R\ci Q}\Delta\ci R g))+(\Delta\ci Q f, T^*(\chi\ci{R\setminus R\ci Q}\Delta\ci R g)).
 $$
 Observe that since $Q$ is good, $Q\subset R$, and $\ell(Q)<2^{-r}  \ell(R)$, we have
 $$
 \dist(Q,\supp \chi\ci{R\setminus R\ci Q}\Delta\ci R g)\ge\dist(Q,sk\, R)\ge \ell(Q)^{\al}\ell(R)^{1-\al}.
 $$
 Using Lemma \ref{FarInteractionLemma} and taking into account that the norm $\|\chi\ci{R\setminus R\ci Q}\Delta\ci R g\|\ci{L^2(\R^d;\mu)}$ does not exceed $\|\Delta\ci R g\|\ci{L^2(\R^d;\mu)}$, we conclude that the sum
 $$
 \sum_{Q,R\,:\,Q\subset R,\, \ell(Q)<2^{-r}  \ell(R),   \\  R\ci Q\text{ is transit}}|(\Delta\ci Q f, T^*(\chi\ci{R\setminus R\ci Q}\Delta\ci R g))|
 $$
 can be estimated by the sum \eqref{twostar}.
 Thus, our task is to find a good bound for the sum
$$
\sum_{Q,R\,:\,Q\subset R,\, \ell(Q)<2^{-r}  \ell(R),
                    \\  R\ci Q\text{ is transit}}
(\Delta\ci Q f, T^*(\chi\ci{R\ci Q}\Delta\ci R g)).
$$

Recalling the definition of $\Delta\ci R g$ and that $R\ci Q$ is a
{\it transit\/} cube, we get
$$
\chi\ci{R\ci Q}\Delta\ci R g=c\ci{R_Q}\chi\ci{R\ci Q},
$$
where
$$
c\ci{R_Q}=\langle g\rangle\ci{R\ci Q}
-
\langle g\rangle\ci{R}
$$
is a {\it constant}.
So, our sum can be rewritten as
$$
\sum_{Q,R\,:\,Q\subset R,\, \ell(Q)<2^{-r}  \ell(R),
                    \\  R\ci Q\text{ is transit}}
c\ci{R_Q}(\Delta\ci Q f, T^*(\chi\ci{R\ci Q})).
$$

Our next goal will be to extend the function $\chi\ci{R\ci Q}$ to the
function $1$ in every term.

Let us observe that
\begin{multline*}
(\Delta\ci Q f, T^*(\chi\ci{\R^d\setminus R\ci Q}))=
\int_{\R^d\setminus R\ci Q}k(x,y)\Delta\ci Q f(x) \,d\mu(x)\,d\mu(y)
\\=
\int_{\R^d\setminus R\ci Q}
[k(x,y)-k(x\ci Q,y)]\Delta\ci Q f(x) \,d\mu(x)\,d\mu(y).
\end{multline*}

Note again that for every $x\in Q$, $y\in \R^d\setminus R\ci Q$, we have
$$
|x\ci Q-y|\ge \frac{\ell(Q)}{2}+\dist(Q,\R^d\setminus R\ci Q)
\geq C\ell(Q)\ge 2|x-x\ci Q|.
$$

Therefore,
$$
|k(x,y)-k(x\ci Q,y)|\le C_{CZ}\frac{|x-x\ci Q|^\tau}{|x\ci Q-y|^{m+\tau}}
\le C_{CZ}\frac{\ell(Q)^\tau}{|x\ci Q-y|^{m+\tau}},
$$
and
$$
|(\Delta\ci Q f, T(\chi\ci{\R^d\setminus R\ci Q}b))|\le
C_{CZ}\ell(Q)^{\tau}\|\Delta\ci Q f\|\ci{L^1(\R^d;\mu)}
\int_{\R^d\setminus R\ci Q}
\frac{d\mu(y)}{|x\ci Q-y|^{m+\tau}}.
$$
Now let us consider the sequence of cubes $R^{(j)}\in\mathcal{D}_2$, beginning with
$R^{(0)}=R\ci Q$ and gradually ascending ($R^{(j)}\subset R^{(j+1)}$,
$\ell(R^{(j+1)})=2\ell(R^{(j)})$) to the starting cube $R^0=R^{(N)}$ of the
lattice $\mathcal{D}_2$. Clearly, all the cubes $R^{(j)}$ are transit cubes.

We have
$$
\int_{\R^d\setminus R\ci Q}
\frac{d\mu(y)}{|x\ci Q-y|^{m+\tau}}=
\int_{R^0\setminus R\ci Q}
\frac{d\mu(y)}{|x\ci Q-y|^{m+\tau}}=
\sum_{j=1}^N
\int_{R^{(j)}\setminus R^{(j-1)}}
\frac{d\mu(y)}{|x\ci Q-y|^{m+\tau}}:=\sum_{j=1}^N I_j.
$$

Note now that, since $Q$ is good and $\ell(Q)<2^{-r}  \ell(R)\le
2^{-r}  \ell(R^{(j)})$ for all $j=1,\dots,N$, we have
$$
\dist(Q,R^{(j)}\setminus R^{(j-1)})\ge
\dist(Q, sk\, R^{(j)})\ge
\ell(Q)^{\al}\ell(R^{(j)})^{1-\al}.
$$
Hence
$$
I_j\le
\frac{1}{[\ell(Q)^{\al}\ell(R^{(j)})^{1-\al}]^{m+\tau}}\int_{R^{(j)}} d\mu.
$$
Recalling that $\al=\frac{\tau}{2(m+\tau)}$, we see that the first factor equals
$$
\dfrac{1}{\ell(Q)^{\frac{\tau}{2}} \ell(R^{(j)})^{m+\frac{\tau}{2}}}\,.
$$
Since $R^{(j)}$ is transit, we have
$$
\int_{R^{(j)}}\,d\mu=\mu(R^{(j)})\le
C\ell(R^{(j)})^m.
$$
Thus,
$$
I_j\le C\frac{1}{\ell(Q)^{\frac{\tau}{2}}\ell(R^{(j)})^{\frac{\tau}{2}}}=C
2^{-(j-1)\frac{\tau}{2}}\frac{1}{\ell(Q)^{\frac{\tau}{2}}\ell(R)^{\frac{\tau}{2}}}.
$$
Summing over $j\ge 1$, we get
$$
\int_{\R^d\setminus R\ci Q}
\frac{d\mu(y)}{|x\ci Q-y|^{m+\tau}}=
\sum_{j=1}^N I_j\le
C\frac{1}{1-2^{-\frac{\tau}{2}}}
\frac{1}{\ell(Q)^{\frac{\tau}{2}}\ell(R)^{\frac{\tau}{2}}}.
$$

Now let us note that
\begin{equation}
\label{cQR}
|c_{R_Q}| \le \frac{\|\Delta_R g\|_{L^1(R_Q,\mu)}}{\mu(R_Q)}\le \frac{\|\Delta_R g\|_{L^2(R_Q,\mu)}}{\sqrt{\mu(R_Q)}}\,.
\end{equation}

We finally obtain
\begin{eqnarray*}
|(\Delta\ci Q f, T^*(\chi\ci{\R^d\setminus R\ci Q}))| & \leq & \frac{C}{1-2^{-\frac{\tau}{2}}}
\left[\frac{\ell(Q)}{\ell(R)}\right]^{\frac{\tau}{2}}
\sqrt{\frac{\mu(Q)}{\mu(R\ci Q)}}
\|\Delta\ci Q f\|\ci{L^2(\R^d;\mu)}
\|\Delta\ci R g\|\ci{L^2(\R^d;\mu)}
\end{eqnarray*}
and
\begin{multline*}
\sum_{Q,R\,:\,Q\subset R,\, \ell(Q)<2^{-r}  \ell(R), R\ci Q\text{is transit}}
|c\ci{R,Q}|\cdot|(\Delta\ci Q f, T^*(\chi\ci{\R^d\setminus
R\ci Q}))|
\\
\leq
C\frac{1}{1-2^{-\frac{\tau}{2}}}
\sum_{j=1}^{2^d}\,\sum_{Q,R\,:\,Q\subset R_j}
\left[\frac{\ell(Q)}{\ell(R)}\right]^{\frac{\tau}{2}}
\sqrt{\frac{\mu(Q)}{\mu(R_j)}}
\|\Delta\ci Q f\|\ci{L^2(\R^d;\mu)}
\|\Delta\ci R g\|\ci{L^2(\R^d;\mu)}.
\end{multline*}

We next define a linear operator on $\ell^2\to \ell^2$ given by
$$
T_{Q,R}:=\left[\frac{\ell(Q)}{\ell(R)}\right]^{\frac{\tau}{2}}\sqrt{\frac{\mu(Q)}{\mu(R_1)}}\quad (Q\subset R_1).
$$
Key to the rest of this section is demonstrating that this is a bounded operator on $\ell^2$.

\begin{lemma}
\label{TQRshortrange}
For any two
families  $\{a\ci Q\}\ci{Q\in\mathcal{D}_1^{tr}  }$ and $\{b\ci R\}\ci{R\in\mathcal{D}_2^{tr}  }$
of nonnegative numbers, one has
$$
\sum_{Q,R:Q\subset R_1}T\ci{Q,R}a\ci Q b\ci R\le
\frac{1}{ 1-2^{-\frac{\tau}{2}} }
\Bigl[\sum_{Q}a\ci Q^2\Bigr]^{\frac12}
\Bigl[\sum_{R}b\ci R^2\Bigr]^{\frac12}.
$$
\end{lemma}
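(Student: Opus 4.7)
The strategy mirrors the slicing argument used for Lemma \ref{TQR}: decompose the sum according to the ratio $\ell(Q)/\ell(R)$, extract the geometric factor $2^{-k\tau/2}$, and for each slice apply Cauchy--Schwarz twice, exploiting the disjointness inherent in dyadic lattices together with the key weight $\sqrt{\mu(Q)/\mu(R_1)}$.

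\medskip

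First, for each integer $k\ge 1$, I would introduce the partial sum
$$
S_k := \sum_{\substack{Q,R\,:\,Q\subset R_1 \\ \ell(Q)=2^{-k}\ell(R)}} \sqrt{\frac{\mu(Q)}{\mu(R_1)}}\,a\ci Q b\ci R\,,
$$
so that
$$
\sum_{Q,R\,:\,Q\subset R_1} T\ci{Q,R} a\ci Q b\ci R = \sum_{k\ge 1} 2^{-k\tau/2} S_k\,.
$$
It therefore suffices to prove $S_k\le \bigl(\sum_Q a\ci Q^2\bigr)^{1/2}\bigl(\sum_R b\ci R^2\bigr)^{1/2}$ uniformly in $k$; summing the geometric series in $k$ then yields the constant $1/(1-2^{-\tau/2})$.

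\medskip

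To bound $S_k$, I would first fix $R$ and apply Cauchy--Schwarz in the inner sum over $Q$:
$$
\sum_{\substack{Q\subset R_1 \\ \ell(Q)=2^{-k}\ell(R)}} \sqrt{\frac{\mu(Q)}{\mu(R_1)}}\,a\ci Q \;\le\; \Bigl(\sum_{\substack{Q\subset R_1\\ \ell(Q)=2^{-k}\ell(R)}} a\ci Q^2\Bigr)^{1/2} \Bigl(\sum_{\substack{Q\subset R_1\\ \ell(Q)=2^{-k}\ell(R)}} \frac{\mu(Q)}{\mu(R_1)}\Bigr)^{1/2} \;\le\; \Bigl(\sum_{\substack{Q\subset R_1\\ \ell(Q)=2^{-k}\ell(R)}} a\ci Q^2\Bigr)^{1/2}\,,
$$
where the second factor is at most $1$ because the cubes $Q$ of a fixed size $2^{-k}\ell(R)$ inside $R_1$ are pairwise disjoint (so $\sum \mu(Q)\le \mu(R_1)$). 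Then Cauchy--Schwarz in $R$ gives
$$
S_k \;\le\; \Bigl(\sum_R b\ci R^2\Bigr)^{1/2}\Bigl(\sum_R\,\,\sum_{\substack{Q\subset R_1\\ \ell(Q)=2^{-k}\ell(R)}} a\ci Q^2\Bigr)^{1/2}\,.
$$

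\medskip

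The final step — and the only point that requires attention — is to see that the iterated sum on the right does not exceed $\sum_Q a\ci Q^2$. For this I would invoke the unique ancestry in the dyadic lattice $\mathcal{D}_2$: each $Q$ of side $2^{-k}\ell(R)$ determines a unique cube $R_1\in\mathcal{D}_2$ of side $2\ell(Q)\cdot 2^{k-1}$ containing $Q$, hence a unique $R\in\mathcal{D}_2$ (namely the dyadic parent of that $R_1$). Consequently every $Q$ appears at most once in the double sum, and
$$
\sum_R\,\,\sum_{\substack{Q\subset R_1\\ \ell(Q)=2^{-k}\ell(R)}} a\ci Q^2 \;\le\; \sum_Q a\ci Q^2\,.
$$
Combining the three displays gives $S_k\le (\sum_Q a\ci Q^2)^{1/2}(\sum_R b\ci R^2)^{1/2}$, and summing $2^{-k\tau/2}$ over $k\ge 1$ completes the proof. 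No step presents a real obstacle; the only slightly delicate point is being careful that the factor $\sqrt{\mu(Q)/\mu(R_1)}$ is absorbed via disjointness rather than by any measure-regularity hypothesis (which would fail for our non-homogeneous $\mu$).
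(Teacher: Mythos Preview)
Your proof is correct and follows essentially the same approach as the paper's: slice according to $\ell(Q)/\ell(R)=2^{-k}$, use that cubes of fixed size in $R_1$ are disjoint to absorb $\sqrt{\mu(Q)/\mu(R_1)}$, and exploit the fact that for fixed $k$ each $Q$ is paired with at most one $R$. The paper phrases this last point as the ``block structure'' of $T^{(k)}_{Q,R}$ (every $a_Q$ interacts with only one $b_R$) and reduces directly to a single-$R$ estimate, while you spell it out via two applications of Cauchy--Schwarz and an explicit unique-ancestry argument---these are two presentations of the same computation.
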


\begin{proof}
Let us ``slice'' the matrix $T\ci{Q,R}$ according to the ratio
$\frac{\ell(Q)}{\ell(R)}$. Namely, let
$$
T^{(k)}_{Q,R}=\left\{
\aligned
T\ci{Q,R},&\qquad\text{if } Q\subset R_1,\  \ell(Q)=2^{-k}\ell(R);
\\
0,&\qquad\text{otherwise}
\endaligned
\right.
$$
($k=1,2,\dots$).
It is enough to show that for every $k\ge 0$,
$$
\sum_{Q,R}T^{(k)}\ci{Q,R}a\ci Q b\ci R\le
2^{-\frac{\tau}{2} k }
\Bigl[\sum_{Q}a\ci Q^2\Bigr]^{\frac12}
\Bigl[\sum_{R}b\ci R^2\Bigr]^{\frac12}.
$$
The matrix $\{T^{(k)}_{Q,R}\}$ has a very good ``block'' structure:
every $a\ci Q$ can interact with {\it only one}
 $b\ci R$.
So, it is enough to estimate each block separately, i.e., to show that
for every fixed $R\in\mathcal{D}_2^{tr}  $,
$$
\sum_{Q:\,Q\subset R_1,\, \ell(Q)=2^{-k}\ell(R)}
2^{-\frac{\tau}{2} k }
\sqrt{\frac{\mu(Q)}{\mu(R_1)}}
a\ci Q b\ci R\le
2^{-\frac{\tau}{2} k }
\Bigl[\sum_{Q}a\ci Q^2\Bigr]^{\frac12}
b\ci R.
$$
Because the cubes $Q\in \mathcal{D}_1$ of fixed size do not intersect, we have that 
$$
\sum_{Q:\,Q\subset R_1,\, \ell(Q)=2^{-k}\ell(R)}
\mu(Q)\le \mu(R_1).
$$

Reducing both parts by the non-essential factor $2^{-\frac{\tau}{2} k }b\ci R$ and using the above observations we see that the desired estimate follows from 

\begin{eqnarray*}
\sum_{Q:\,Q\subset R_1,\, \ell(Q)=2^{-k}\ell(R)}\sqrt{\frac{\mu(Q)}{\mu(R_1)}} a\ci Q & \leq & \Bigl[
\sum_{Q:\,Q\subset R_1,\, \ell(Q)=2^{-k}\ell(R)}
\frac{\mu(Q)}{\mu(R_1)}\Bigr]^{\frac12}
\Bigl[\sum_{Q}a\ci Q^2\Bigr]^{\frac12}\\
& \leq & \Bigl[\sum_{Q}a\ci Q^2\Bigr]^{\frac12}.
\end{eqnarray*}
This completes the proof.

\end{proof} 

\medskip

\begin{remark}
We did not use here the fact that the sequences $\{a_Q\}$ and $\{b_R\}$ are supported on transit cubes. We actually proved the following Lemma.

\begin{lemma}
\label{actually}
The matrix $\{T\ci{Q,R}\}$ defined
by
$$
T\ci{Q,R}:=\left[\frac{\ell(Q)}{\ell(R)}\right]^{\frac{\tau}{2}}
\sqrt{\frac{\mu(Q)}{\mu(R_1)}}\qquad\quad(Q\subset R_1),
$$
generates a bounded operator in $l^2$.
\end{lemma}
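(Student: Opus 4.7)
The plan is essentially to observe that the argument used for Lemma \ref{TQRshortrange} never invoked the transit property of the cubes involved, so the same proof structure applies verbatim to arbitrary families indexed by all dyadic cubes $Q\in\mathcal{D}_1$ and $R\in\mathcal{D}_2$ with $Q\subset R_1$. I would present this as a clean deduction by isolating the combinatorial/geometric facts actually used.

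First, I would slice the matrix according to the generation gap: set $T^{(k)}_{Q,R} = T_{Q,R}$ when $\ell(Q) = 2^{-k}\ell(R)$ and $Q\subset R_1$, and $T^{(k)}_{Q,R} = 0$ otherwise, so that $T = \sum_{k\ge 1} T^{(k)}$. It is enough to show the bound $\|T^{(k)}\|_{\ell^2\to\ell^2} \le 2^{-\tau k/2}$ and sum a geometric series. For fixed $k$, the matrix $T^{(k)}$ has a block structure: each column $R$ interacts only with the $Q\subset R_1$ of side $2^{-k}\ell(R)$, and each such $Q$ belongs to exactly one such $R$ (its unique dyadic ancestor in $\mathcal{D}_2$ of the appropriate size whose first child contains it). This means the estimate decouples completely into independent blocks indexed by $R$.

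For a single block, the inequality reduces to
\[
\sum_{Q\subset R_1,\,\ell(Q)=2^{-k}\ell(R)} \sqrt{\tfrac{\mu(Q)}{\mu(R_1)}}\, a_Q \;\le\; \Bigl[\sum_Q a_Q^2\Bigr]^{1/2},
\]
which follows from Cauchy--Schwarz together with the disjointness of the cubes $Q$ of fixed side length inside $R_1$, giving $\sum_{Q\subset R_1,\,\ell(Q)=2^{-k}\ell(R)} \mu(Q)/\mu(R_1)\le 1$. Summing the resulting block estimates against $b_R$ via Cauchy--Schwarz in $R$ yields $\|T^{(k)}\|_{\ell^2\to\ell^2} \le 2^{-\tau k/2}$. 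Summing in $k$ gives the overall bound $(1-2^{-\tau/2})^{-1}$.

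There is really no main obstacle: the entire argument is elementary once the block structure is exposed, which is why the authors present it as a remark immediately following Lemma \ref{TQRshortrange}. The only subtlety worth flagging is that the transit hypothesis was used in the earlier Lemma \ref{TQR} to control non-Ahlfors balls, but here no such control is required, since the estimate is purely combinatorial in $\mu(Q)/\mu(R_1)$ and uses neither $m$ nor the kernel.
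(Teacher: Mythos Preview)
Your proposal is correct and follows essentially the same approach as the paper: the authors present Lemma \ref{actually} precisely as a remark that the proof of Lemma \ref{TQRshortrange} never used the transit property, and your slicing-by-generation, block-structure, Cauchy--Schwarz argument is exactly that proof.
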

\end{remark}

\medskip

We just finished estimating an extra term which appeared when we extended $\chi\ci{R\ci Q}$ to $1$. So, the extension of $\chi\ci{R\ci Q}$ to the whole $1$ does not cause too
much harm, and we get the sum
$$
\sum_{Q,R\,:\,Q\subset R,\, \ell(Q)<2^{-r}  \ell(R),
                    \\  R\ci Q\text{ is transit}}
c\ci{R_Q}(\Delta\ci Q f, T^* 1)
$$
to estimate. Note that the inner product
$(\Delta\ci Q f, T^* 1)$ {\it does not depend\/} on $R$ at all, so it seems to be a good idea to sum over $R$ for fixed $Q$ first.

Recalling that

$$
c\ci{R_Q}=\langle  g\rangle\ci{R\ci Q}-\langle g\rangle\ci{R}
$$
and that $\Lambda g=0 \Longleftrightarrow \langle g\rangle\ci{R^0}=0$, we
conclude that for every $Q\in\mathcal{D}_1^{tr}  $ that really appears in the above sum,
$$
\sum_{R\,:\,R\supset Q,\, \ell(R)>2^{m}\ell(Q),                    \\  R\ci Q\text{ is transit}}c\ci{R_Q}=\langle g\rangle\ci{R_Q}\,.
$$

\vspace{.1in}

\begin{defi}
Let $R(Q)$ be the smallest {\it transit} cube $R\in \mathcal{D}_2$ containing $Q$
and such that $\ell(R)\ge 2^r\ell(Q)$.
\end{defi}

\vspace{.1in}

So, we obtain the sum

$$
\sum_{Q:\,\ell(Q)<2^{-r}  \ell(R)} \langle g\rangle\ci{R(Q)}(\Delta\ci Q f, T^* 1)
$$
to take care of.

\medskip

\begin{remark}
Let us recall that we had the convention that  says that $Q$ here are only good ones and, of course, they are only transit cubes. The range of summation should be $Q\in \mathcal{D}_1^{tr}  $, $Q$ is good (default); there exists a cube $R\in\mathcal{D}_2^{tr}  $ such that $\ell(Q)<2^{-r}  \ell(R)$, $Q\subset R$ and the child $R\ci Q$ (the one containing $Q$) of $R$  is transit. In other words, in fact, the sum is written formally incorrectly. We have to replace $R(Q)$ by $R_Q$ in the summation. However, the smallest transit cube containing $Q$ (this is $R(Q)$) and the smallest transit child (containing $Q$) of a certain subcube $R$ of $R^0$  (this child is $R_Q$) are of course the one and the same cube, unless $R(Q)=R^0$. Thus the sum formally has some extra terms corresponding to $R(Q) =R^0$,  but fortunately they all are zeros!  Since we work now with $g$ such that $\Lambda g=0$ (recall that $\Lambda g$ denotes the average of $g$ with respect to $\mu$), so $\langle g\rangle_{R(Q)} =0$ if $R(Q)=R^0$.
\end{remark}

\medskip

\subsection{Pseudo-$BMO$ and a Special Paraproduct}
\label{bmo}

To introduce the paraproduct operator, we rewrite our sum as follows
\begin{align*}
\sum_{Q:\,\ell(Q)<2^{-r}  \ell(R)}\langle g\rangle\ci{R(Q)}(\Delta\ci Q f, T^*1)&= \sum_{Q:\,\ell(Q)<2^{-r}  \ell(R)}\langle g\rangle\ci{R(Q)}( f,\Delta_Q^* T^*1) \\&= ( f, \sum_{Q:\,\ell(Q)<2^{-r}  \ell(R)}\langle g\rangle\ci{R(Q)}\Delta_Q T^*1)\,.
\end{align*}
Here, we used the fact that $\Delta_Q^* =\Delta_Q$.  The term that appears in the pairing with $f$ is ubiquitous enough that it deserves to be singled out.

\begin{defi}
Given a function $F$, we define the paraproduct operator with symbol $F$ that acts on $L^2(\R^d;\mu)$ functions via 
$$
\Pi_{F}g(x) := \sum_{R\in \mathcal{D}_2,\,R\subset R^0}\langle g\rangle_R\sum_{Q\in\mathcal{D}_1^{tr}\cap\mathcal{G}_1,\,\ell(Q)= 2^{-r}  \ell(R)} \Delta_Q F(x)\,.
$$
\end{defi}

As in the case of homogeneous harmonic analysis, the behavior of the paraproduct operator $\Pi_F$ is dictated by the membership of $F$ in a suitable ``$BMO$'' space.

\begin{defi}
A function $F\in L^2(\R^d;\mu)$ belongs to ``pseudo-$BMO$'' if there exists $\lambda>1$ such that for any cube $Q$ such that $\mu(sQ)\le K\, s^m\ell(Q)^m, \, s\ge 1,$ we have
$$
\int_Q |F(x)-\langle F\rangle_Q|^2\,d\mu(x) \le C\, \mu(\lambda\,Q)\,.
$$
\end{defi}

We next demonstrate that the image of $1$ under the action of $T^*$ is an element of pseudo-$BMO$.

 \begin{lemma}
\label{Tstar1}
Let $\mu$, $T$ satisfy the assumptins of Theorem \ref{md}. Then
\begin{equation}
\label{para}
T^*1 \in \textnormal{pseudo-BMO}\,.
\end{equation}
Here $C$ depends only on the constants of  Theorem \ref{md}.
\end{lemma}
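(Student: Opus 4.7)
\medskip

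\textbf{Plan of Proof.} The claim says $T^*1$ lies in the pseudo-BMO class adapted to our setting. I would prove this by the classical split into a local and a far-away piece, exactly as in the usual derivation of ``$T1 \in BMO$'', but carefully adapted to the non-homogeneous situation, which is precisely what the restriction to cubes $Q$ satisfying $\mu(sQ)\le K s^m\ell(Q)^m$ $(s\ge 1)$ is designed to accommodate.

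\medskip

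\textbf{Step 1: The splitting.} Fix any cube $Q$ with $\mu(sQ)\le K s^m \ell(Q)^m$ for all $s\ge 1$. Choose $\lambda>1$ depending only on $d$ (say $\lambda=4\sqrt{d}$) so that whenever $x,x'\in Q$ and $y\in\R^d\setminus \lambda Q$ we have $|x-x'|\le \tfrac12 |x-y|$. Write
\[
T^*1 \;=\; T^*\chi_{\lambda Q} \;+\; T^*\chi_{\R^d\setminus\lambda Q}
\;=:\; F_1 + F_2.
\]
Since subtracting a constant does not increase the $L^2(\mu)$ norm,
\[
\int_Q |T^*1 - \langle T^*1\rangle_Q|^2\,d\mu
\;\le\; 2\int_Q |F_1|^2\,d\mu \;+\; 2\int_Q |F_2 - \langle F_2\rangle_Q|^2\,d\mu.
\]

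\medskip

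\textbf{Step 2: The local piece.} The $T1$ hypothesis \eqref{T1} applied to the cube $\lambda Q$ gives at once
\[
\int_Q |F_1|^2\,d\mu
\;\le\; \|T^*\chi_{\lambda Q}\|^2_{L^2(\mu)}
\;\le\; A\,\mu(\lambda Q),
\]
which is of the right form.

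\medskip

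\textbf{Step 3: The far piece — oscillation bound.} For $x,x'\in Q$, using the smoothness of the kernel and the choice of $\lambda$,
\[
|F_2(x)-F_2(x')|
\;\le\; \int_{\R^d\setminus \lambda Q} |k(y,x)-k(y,x')|\,d\mu(y)
\;\le\; C_{CZ}\,|x-x'|^{\tau}\!\!\int_{\R^d\setminus \lambda Q}\frac{d\mu(y)}{|x-y|^{m+\tau}}.
\]
Decompose the outside region into dyadic annuli $2^{k+1}\lambda Q\setminus 2^k\lambda Q$ and use the growth assumption $\mu(2^{k+1}\lambda Q)\le K(2^{k+1}\lambda)^m\ell(Q)^m$ to get
\[
\int_{\R^d\setminus \lambda Q}\frac{d\mu(y)}{|x-y|^{m+\tau}}
\;\lesssim\; \sum_{k\ge 0}\frac{K(2^{k+1}\lambda)^m\ell(Q)^m}{(2^k\lambda\,\ell(Q))^{m+\tau}}
\;\le\; \frac{C(K,m,d,\tau)}{\ell(Q)^{\tau}}.
\]
Combined with $|x-x'|\le\sqrt{d}\,\ell(Q)$, this yields $|F_2(x)-F_2(x')|\le C$, a constant independent of $Q$. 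Consequently
\[
\int_Q |F_2 - \langle F_2\rangle_Q|^2\,d\mu
\;\le\; C^2\mu(Q)
\;\le\; C^2\mu(\lambda Q).
\]

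\medskip

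\textbf{Step 4: Conclusion and main obstacle.} Adding the two estimates gives the pseudo-BMO inequality with constant depending only on $A, K, C_{CZ}, m, d, \tau$. The only genuine subtlety — and the main obstacle for non-homogeneous measures — is controlling the tail integral in Step~3, which is exactly why the definition of pseudo-BMO restricts to cubes with the $m$-growth of $\mu$ on all dilates $sQ$, $s\ge 1$: without this restriction, the annular sum could diverge. Note that the Bergman-type estimate $|k(x,y)|\le \max(d(x)^{-m},d(y)^{-m})$ plays no direct role here; it has already been used to control the non-Ahlfors balls by capturing them in $H$, so that typical cubes $Q$ outside $H$ do satisfy the required growth hypothesis.
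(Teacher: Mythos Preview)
Your proof is correct and follows essentially the same route as the paper's own argument: split $T^*1$ into a local piece $T^*\chi_{\lambda Q}$ controlled directly by the $T1$ hypothesis, and a far-away piece whose oscillation on $Q$ is bounded by a constant via the kernel smoothness together with the growth condition $\mu(sQ)\le K s^m\ell(Q)^m$ summed over annuli. The only cosmetic difference is that the paper uses $\lambda$-adic annuli $\lambda^{j+1}Q\setminus\lambda^j Q$ where you use dyadic ones, and your remark that the Bergman-type bound on $k$ is not needed here is accurate.
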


\begin{proof}
For $x\in Q$ we write $T^*1(x) = (T^*\chi_{\lambda Q})(x) + (T^*\chi_{\R^d\setminus\lambda Q})(x)=: \vf(x) +\psi(x)$.
First, we notice that 
$$
x,y \in Q\Rightarrow |\psi(x)-\psi(y)| \le C(K,\lambda,\tau)\,,
$$ where $K$ is the constant from our definition above. This is easy:
\begin{eqnarray*}
|\psi(x)-\psi(y)| & \le &  \int_{\mathbb{R}^d\setminus \lambda Q} |k(x,t)-k(y,t)| \,d\mu(t)= \sum_{j=1}^{\infty} \int_{\lambda^{j+1} Q\setminus \lambda^j Q} |k(x,t)-k(y,t)| \,d\mu(t)\\
 & \le &  \sum_{j=1}^{\infty} \frac{\ell(Q)^{\tau}}{(\lambda^j\ell(Q))^{m+\tau}}\, K (\lambda^j\ell(Q))^{m}\le C(K,\lambda, \tau)\,.
\end{eqnarray*}
Therefore,
$$
\int_Q |\psi(x)-\langle \psi\rangle_Q|^2\,d\mu(x) \le C\, \mu(Q)\,.
$$
But, 
$$
\int_Q |\vf(x)-\langle \vf\rangle_Q|^2\,d\mu(x) \le 4\int_Q |T^*\chi_{\lambda Q}(x)|^2\,d\mu(x) \lesssim A\,\mu(\lambda Q)
$$
by the $T1$ assumption, Condition (\ref{T1}), of Theorem \ref{md}.

\end{proof}

\begin{lemma}
\label{paraproductlm}
Let $\mu$, $T$ satisfy the assumptions of Theorem \ref{md}. Then
\begin{equation}
\label{paraproduct}
\|\Pi_{T^*1}\|_{L^2(\R^d;\mu)\to L^2(\R^d;\mu)} \leq C\,.
\end{equation}
Here $C$ depends only on the constants of  Theorem \ref{md}.
\end{lemma}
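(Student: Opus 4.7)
The plan is to exploit the orthogonality of the $\Delta_Q$-projections from Lemma \ref{DeltaQ} and the Pythagorean identity of Lemma \ref{Riesz}, together with the pseudo-BMO membership of $F:=T^*1$ from Lemma \ref{Tstar1}, reducing the paraproduct bound to a non-homogeneous Carleson embedding for the dyadic averages $\langle g\rangle_R$.

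\textbf{Step 1 (orthogonalize).} For each good transit $Q\in\mathcal{D}_1$, the constraint $\ell(Q)=2^{-r}\ell(R)$ together with $Q\subset R$ singles out a unique $R=R(Q)\in\mathcal{D}_2$, so
$$
\Pi_{F} g=\sum_{Q\in\mathcal{D}_1^{tr}\cap\mathcal{G}_1}\langle g\rangle_{R(Q)}\,\Delta_Q F.
$$
By Lemma \ref{DeltaQ} the projections $\Delta_Q$ for distinct transit $Q$ are mutually orthogonal in $L^2(\R^d;\mu)$ and Lemma \ref{Riesz} yields the Pythagorean identity
$$
\|\Pi_{F} g\|_{L^2(\R^d;\mu)}^2=\sum_{Q}|\langle g\rangle_{R(Q)}|^2\,\|\Delta_Q F\|_{L^2(\R^d;\mu)}^2=\sum_{R\in\mathcal{D}_2^{tr}}|\langle g\rangle_R|^2\,\alpha_R,
$$
where $\alpha_R:=\sum_{Q:\,R(Q)=R}\|\Delta_Q F\|_{L^2(\R^d;\mu)}^2$.

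\textbf{Step 2 (Carleson packing).} I will show that for every transit $R_0\in\mathcal{D}_2$,
$$
\sum_{R\subset R_0}\alpha_R\le C\,\mu(\lambda' R_0).
$$
Pick $S\in\mathcal{D}_1$ with $R_0\subset S$ and $\ell(S)\lesssim\ell(R_0)$; such an $S$ exists up to the usual boundary technicalities between the independent shifts $\omega_1,\omega_2$. Every $Q$ contributing to the left-hand side satisfies $Q\subset R(Q)\subset R_0\subset S$, so by Lemma \ref{Riesz} applied on $S$,
$$
\sum_{R\subset R_0}\alpha_R\le\sum_{Q\subset S,\,Q\in\mathcal{D}_1^{tr}}\|\Delta_Q F\|_{L^2(\R^d;\mu)}^2=\int_S|F-\langle F\rangle_S|^2\,d\mu.
$$
Because $S$ is comparable in scale to the transit cube $R_0$, it inherits the non-Ahlfors bound $\mu(sS)\le K s^m \ell(S)^m$ for $s\ge 1$, and so the pseudo-BMO estimate of Lemma \ref{Tstar1} applies to give
$$
\int_S|F-\langle F\rangle_S|^2\,d\mu\le C\,\mu(\lambda S)\le C'\,\mu(\lambda' R_0).
$$

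\textbf{Step 3 (Carleson embedding).} The packing from Step 2, together with the standard non-homogeneous dyadic Carleson embedding theorem in the form used in \cites{NTV4,V}, yields
$$
\sum_R|\langle g\rangle_R|^2\,\alpha_R\le C\,\|g\|_{L^2(\R^d;\mu)}^2,
$$
which, combined with Step 1, finishes the proof.

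The crux is Step 2: the two independent dyadic lattices $\mathcal{D}_1$ and $\mathcal{D}_2$ have to be reconciled so that an $L^2$-energy estimate for $T^*1$ on a $\mathcal{D}_1$-cube $S$ controls an $\alpha_R$-packing indexed by $\mathcal{D}_2$-cubes, and one must verify that $S$ inherits a non-Ahlfors-type bound so that pseudo-BMO is applicable. These are standard technical manoeuvres in the NTV framework, and the enlargement $\mu(\lambda R_0)$ (rather than $\mu(R_0)$) on the right-hand side of pseudo-BMO is precisely what the non-homogeneous Carleson embedding is designed to swallow.
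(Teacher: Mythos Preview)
Your Steps 1 and 3 match the paper's strategy exactly: orthogonalize, reduce to a Carleson packing condition on the weights $\alpha_R$, then invoke the dyadic Carleson embedding. The divergence is entirely in Step 2, and there the gap is real.

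You try to cover $R_0\in\mathcal D_2$ by a single $\mathcal D_1$-cube $S$ of comparable size and apply the pseudo-BMO estimate for $F=T^*1$ once on $S$. Two things go wrong. First, because $\mathcal D_1$ and $\mathcal D_2$ are independently shifted, a cube $R_0\in\mathcal D_2$ will generically straddle the boundaries of $\mathcal D_1$-cubes at every scale; this is not a ``boundary technicality'' one can dismiss, since the identity $\sum_{Q\subset S}\|\Delta_Q F\|^2=\int_S|F-\langle F\rangle_S|^2\,d\mu$ needs $S$ to be a genuine (transit) $\mathcal D_1$-cube. Second, and more seriously, even if you patch this by using $O(1)$ many $\mathcal D_1$-cubes, the pseudo-BMO bound only gives you $\sum_{R\subset R_0}\alpha_R\le C\,\mu(\lambda' R_0)$, i.e.\ a Carleson packing with an \emph{enlarged} cube on the right. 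For an arbitrary (non-doubling) measure $\mu$ there is no control of $\mu(\lambda' R_0)$ by $\mu(R_0)$, and the non-homogeneous Carleson embedding theorem requires the packing with $\mu(R_0)$ itself on the right; it is not ``designed to swallow'' the enlargement as you assert.

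The paper's proof of Step 2 repairs exactly this defect, and the device is a \emph{Whitney decomposition} of $R_0$. One breaks $R_0$ into Whitney pieces $P$ with $1.5P\subset R_0$ and bounded overlap of the $1.4P$'s. The goodness condition $\dist(Q,\partial R_0)\ge \ell(Q)^\alpha\ell(R_0)^{1-\alpha}$ forces any contributing $Q$ to lie well inside $R_0$ and to satisfy $\ell(Q)\lesssim\ell(P)$ for the Whitney piece $P$ it meets; hence $Q\subset 1.2P$. Whenever $s_P\ne 0$ the Whitney piece $P$ touches a small transit $Q$, which supplies the Ahlfors bound $\mu(sP)\lesssim s^m\ell(P)^m$, so pseudo-BMO applies \emph{on $1.2P$} and gives $s_P\le C\,\mu(1.4P)$. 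Summing over the Whitney pieces and using bounded overlap inside $R_0$ yields the \emph{strong} packing $\sum_{R\subset R_0}\alpha_R\le C\,\mu(R_0)$, with no enlargement. That is the missing idea in your argument.
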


\begin{proof}

Let $F=T^*1$. In the definition of $\Pi_F$ since all $\Delta_Q$ are mutually orthogonal, it is easy to see that
$$
\|\Pi_F g\|_{L^2(\R^d;\mu)}^2 = \sum_{R\in \mathcal{D}_2,\,R\subset R^0}|\langle g\rangle_R|^2\sum_{Q\in\mathcal{D}_1^{tr}\cap\mathcal{G}_1,\,\ell(Q)= 2^{-r}  \ell(R)} \|\Delta_Q F\|_{L^2(\R^d;\mu)}\,.
$$
Put 
$$
a_R:= \sum_{Q\in\mathcal{D}_1^{tr}\cap\mathcal{G}_1,\,\ell(Q)= 2^{-r}  \ell(R)} \|\Delta_Q F\|_{L^2(\R^d;\mu)}\,.
$$
By the Carleson Embedding Theorem, it is enough to prove that for every $S\in \Dk_2$

\begin{equation}
\label{cet}
\sum_{R\in \Dk_2, R\subset S} a_R \le C\,\mu(S)\,,
\end{equation}
or equivalently, 

\begin{equation}
\label{cet1}
\sum_{Q\in\mathcal{D}_1^{tr},\,\ell(Q)\le 2^{-r}  \ell(R), \dist(Q, \pd R) \ge\ell(Q)^{\alpha}\ell(R)^{1-\alpha}} \|\Delta_Q F\|_{L^2(\R^d;\mu)} \le C\, \mu(R)\,.
\end{equation}

Perform a Whitney decomposition of $R$ into disjoint cubes $P$, such that $1.5 P \subset R$, and $1.4 P$ have only bounded multiplicity $C(d)$ of intersection.  Then, consider the sums
\begin{equation}
\label{cet2}
s_P:=\sum_{Q\in\mathcal{D}_1^{tr},\,\ell(Q)\le 2^{-r}  \ell(R), Q\cup P\neq \emptyset, \dist(Q, \pd R) \ge\ell(Q)^{\alpha}\ell(R)^{1-\alpha}} \|\Delta_Q F\|_{L^2(\R^d;\mu)}\,.
\end{equation}

This $s_P$ can be zero if there is no transit cubes as above intersecting it. But if $s_P\neq 0$ then necessarily
$$
\mu(P)\le C(d)\ell(P)^m\,,
$$ 
and moreover
$$
\mu(sP)\le C(d)\,s^m\,\ell(P)^m\,,\,\,\forall s\ge 1\,.
$$

In fact, in this case $P$ intersects a transit cube $Q$, which by elementary geometry is ``smaller" than $P$, so $\ell(Q) \le C(r,d) \ell(P)$. But, then the above inequalities follow from the definition of {\it transit}.

It is also clear that for large $r$ and for $Q$ and $P$ as above
$$ 
Q\cap P\neq \emptyset\Rightarrow Q\subset 1.2 \,P\,.
$$

Therefore,
$$
s_P\neq 0\Rightarrow s_P \le \sum_{Q\in\mathcal{D}_1^{tr},\,\ell(Q)\le 2^{-r}  \ell(R), Q\subset 1.2\, P\, \dist(Q, \pd R) \ge\ell(Q)^{\alpha}\ell(R)^{1-\alpha}} \|\Delta_Q F\|_{L^2(\R^d;\mu)}\,.
$$
So,

$$
s_P\neq 0\Rightarrow s_P \le \int_{1.2P} |F(x)-\langle F\rangle_{1.2\,P}|^2\,d\mu(x)\le C\,\mu(1.4\, P)\,.
$$
The last inequality follows from Lemma
\ref{Tstar1}.

Now we add all $s_P$'s and we get that this is less than or equal to $C\, \sum \mu (1.4\,P)$. This is smaller than $C_1\, \mu(R)$ as $1.4P$'s have multiplicity of intersection $C(d)<\infty$.

\end{proof}

\section{The Diagonal Sum: Controlling Term $\sigma_{1}$.}
\label{diag}

To complete the estimate of $|(Tf_{good},g_{good})|$ in only remains to estimate $\sigma_1$. But notice that
$$
\|\Delta_Q f\|_{L^1(\R^d;\mu)} \le \|\Delta_Q f\|_{L^2(\R^d;\mu)} \sqrt{ \mu(Q)},\quad \|\Delta_R g\|_{L^1(\R^d;\mu)} \le \|\Delta_R g\|_{L^2(\R^d;\mu)} \sqrt{ \mu(R)}\,.
$$ 

Remember that all cubes $Q$ and $R$ in all sums are transit cubes. In particular, in $\sigma_1$ we have that $Q$ and $R$ are close and of almost the same size. If a son of $Q$, $S(Q)$, is terminal, then by Lemma \ref{obv1}
$$
|(T\chi_{S(Q)}\Delta_Q f, \Delta_R g)|\le \frac{\sqrt{ \mu(Q)}\sqrt{ \mu(R)}}{\ell(Q)^m} \|\Delta_Q f\|_{L^2(\R^d;\mu)}  \|\Delta_R g\|_{L^2(\R^d;\mu)} \,.
$$
So if a son of $Q$ is terminal but $Q$ and $R$ are transit, then $\mu(Q) \le C\, \ell(Q)^m\approx\ell(R)^m$.
Summing such pairs (and symmetric ones, where a son of $R$ is terminal) we get $C(r)\|f\|_{L^2(\R^d;\mu)}|\|g\|_{L^2(\R^d;\mu)}$.

We are left with the part of $\sigma_1$, where we sum over $Q$ and $R$ such that their sons are transit. Then we use the pairing
$$
|(T\chi_{S(Q)}\Delta_Q f, \chi_S(R)\Delta_r g)|\le A^2|c_{S(Q)}||c_{S(R)}|\sqrt{\mu(S(Q))}\sqrt{\mu(S(R))}\,.
$$ 
The estimate above follows from our $T1$ assumption in Theorem \ref{md}.
Now use \eqref{cQR} to obtain
$$
|(T\chi_{S(Q)}\Delta_Q f, \chi_{S(R)}\Delta_R g)|\le C\, \|\Delta_Q f\|_{L^2(\R^d;\mu)}  \|\Delta_R g\|_{L^2(\R^d;\mu)} \,.
$$

This completes the estimate of term $\sigma_1$ and thus the proof of Theorem \ref{md}.  

\bigskip

\section{The Proof of Theorem \ref{md1}}

The proof of Theorem \ref{md1} repeats verbatim that of Theorem \ref{md}. We need only to provide ourselves with the family of ``dyadic" lattices in metric $\Delta$ of cubes $Q$, which satisfies the following property. 

First, consider the case when $d=2$.  Then, we have that the metric $\Delta(x,y)$ is comparable to  the following  quasi-metric.   Let $z\in\{z\in \mathbb{C}^{d}: 1/2\le |z|\le 2\}$ be thought of as $(|z|, \frac{z}{|z|})$, and then we have $\Delta\left(\left(|z|, \frac{z}{|z|}\right), \left(|w|,\frac{w}{|w|}\right)\right) = ||z|-|w||+\rho(\frac{z}{|z|}, \frac{w}{|w|})$, where $\rho(\xi,\eta)=|1-\bar{\xi}\cdot\eta|$ is the spherical metric between points on the boundary of the unit ball.  For a point on the sphere we can introduce local coordinates $p=(r, c)$, where r is real and c is a complex number $\rho(p_1, p_2) = |r_1-r_2| + |c_1-c_2|^2$.  Thus, the metric $\Delta(x,y)$ is equivalent to the quasi-metric in $\mathbb{R}^4$, given by $\lambda(x,y)=|x_1-y_1|+|x_2-y_2|+|x_3-y_3|^2+|x_4-y_4|^2$.  

As can be seen from the proof of Theorem \ref{md}, the main result will still hold for Calder\'on--Zygmund kernels with respect to this quasi-metric.  The only issue that is not clear is the construction of the dyadic lattices.  To obtain these, one starts with the unit cube and sub-divides two of the sides by a factor of 2, while the other two sides are subdivided by a factor of 4 (these correspond to the structure of the metric $\lambda$).  This can then be repeated.  These are then shifted using the standard translations in $\mathbb{R}^4$.  This then gives the dyadic lattices of interest.  We then are only left with checking the probability statements (Theorem \ref{badprob}), and this is a straightforward modification of what appears in this paper.  We sketch the details now.

In this case, the geometry of a cube $Q$ is governed by a parameter $\mathfrak{q}$, which determines two of its sides (the other two are then $\mathfrak{q}^2$).  Similarly for $R$, it has a parameter $\mathfrak{r}$ determining its geometry. 

\medskip 

\begin{remark}
Notice that $\lambda$-diameters  ($\lambda$-size $\ell(Q),\ell(R)$, so to speak) of $Q, R$ are equivalent  correspondingly to $\mathfrak{q}^2, \mathfrak{r}^2$.  This is because of the structure of the quasi-metric $\lambda$.
\end{remark}

\medskip

Again, one would consider the parameters of the \cz\, kernel $\tau$ and $m$ (but with respect to the metric $\Delta$), and set $\alpha=\frac{\tau}{2\tau+2m}$.  We fix a small number $\delta>0$, and $S\geq 2$ chosen momentarily, and choose an integer $r$ such that
$$
2^{-r}\leq \delta^S<2^{-r+1}.
$$

Then a cube $Q$ is  called $\delta$-bad if there exists a cube $R$ such that
\begin{itemize}
\item[(i)] $\mathfrak{r}\geq 2^r\mathfrak{q}$;
\item[(ii)] $\textnormal{dist}_{\lambda}(Q,\partial R)<\mathfrak{q}^{2\alpha}\mathfrak{r}^{2(1-\alpha)}$.
\end{itemize}

One then can prove the following Proposition

\begin{prop}
One can choose $S= S(\alpha)$ in such a way that for any fixed $Q\in \mathcal{D}_1$,
\begin{equation}
\mathbb{P}_{\om_2}\{Q \,\text{is bad}\} \leq \delta^2\,.
\end{equation}
By symmetry $\mathbb{P}_{\om_1}\{R \,\text{is bad}\} \leq \delta^2$ for any fixed $R\in \mathcal{D}_2$.
\end{prop}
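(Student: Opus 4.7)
The plan is to transcribe the argument of Theorem~\ref{badprob} into the anisotropic quasi-metric $\lambda$, the sole new ingredient being a Lebesgue-volume computation that reflects the mixed homogeneity of $\lambda$. I would fix $Q\in\mathcal{D}_1$ with parameter $\mathfrak{q}$ and bound, for each integer $k\ge r$, the $\omega_2$-probability that there exists $R\in\mathcal{D}_2$ with parameter $\mathfrak{r}=2^k\mathfrak{q}$ satisfying $\dist_\lambda(Q,\partial R)<\mathfrak{q}^{2\alpha}\mathfrak{r}^{2(1-\alpha)}=:t$. Summing over $k\ge r$ and choosing $S$ so that the resulting geometric series is dominated by $\delta^2$ will complete the argument as in the Euclidean case.

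By the usual unfolding of the shift over the fundamental cell, the single-scale probability is bounded (up to an absolute constant) by the ratio of the Lebesgue measure of the $\lambda$-neighborhood of $\partial R$ of $\lambda$-thickness $t$ to the Lebesgue measure of $R$ itself. By the remark above the Proposition, a cube with parameter $\mathfrak{r}$ has two Euclidean sides of length $\mathfrak{r}^2$ (in the two coordinates where $\lambda$ agrees with Euclidean distance) and two Euclidean sides of length $\mathfrak{r}$ (in the two coordinates where $\lambda$ is the square of Euclidean distance); its Lebesgue volume is $\mathfrak{r}^6$. The boundary $\partial R$ decomposes into two kinds of faces. A face perpendicular to a \emph{linear} coordinate has $3$-dimensional Euclidean area $\mathfrak{r}^2\cdot\mathfrak{r}\cdot\mathfrak{r}=\mathfrak{r}^4$, and its $\lambda$-thickening of level $t$ is a Euclidean strip of Euclidean thickness $t$, contributing volume $\lesssim t\,\mathfrak{r}^4$. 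A face perpendicular to a \emph{quadratic} coordinate has area $\mathfrak{r}^2\cdot\mathfrak{r}^2\cdot\mathfrak{r}=\mathfrak{r}^5$, and its $\lambda$-thickening is a Euclidean strip of Euclidean thickness $\sqrt{t}$, contributing volume $\lesssim\sqrt{t}\,\mathfrak{r}^5$. Hence the whole strip has Lebesgue volume $\lesssim t\,\mathfrak{r}^4+\sqrt{t}\,\mathfrak{r}^5$, and the ratio to $\mathfrak{r}^6$ is
\[
\lesssim\frac{t}{\mathfrak{r}^2}+\frac{\sqrt{t}}{\mathfrak{r}}=\Bigl(\frac{\mathfrak{q}}{\mathfrak{r}}\Bigr)^{\!2\alpha}+\Bigl(\frac{\mathfrak{q}}{\mathfrak{r}}\Bigr)^{\!\alpha}\lesssim 2^{-k\alpha}.
\]

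The remainder is mechanical: summing over $k\ge r$ yields
\[
\mathbb{P}_{\omega_2}\{Q\text{ is bad}\}\;\lesssim\;\sum_{k\ge r}2^{-k\alpha}\;\lesssim\;2^{-r\alpha}\;\le\;C\,\delta^{S\alpha},
\]
and taking $S=3/\alpha$ (or any $S>2/\alpha$) forces the right-hand side to be at most $\delta^2$; the $\omega_1\leftrightarrow\omega_2$ symmetry gives the analogous bound for fixed $R\in\mathcal{D}_2$. The one genuine obstacle is the anisotropic volume estimate above: one must split the faces of $R$ into those perpendicular to a $1$-homogeneous coordinate and those perpendicular to a $2$-homogeneous coordinate, because the Euclidean thickness of a $\lambda$-tube of width $t$ is either $t$ or $\sqrt{t}$ according to which face of $\partial R$ one is near. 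Once this bookkeeping is in place, the choice $\alpha=\tau/(2\tau+2m)$ and the value $S=S(\alpha)$ are made exactly as in the proof of Theorem~\ref{badprob}.
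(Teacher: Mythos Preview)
Your argument is correct and follows the paper's own proof essentially verbatim: the paper likewise splits the faces of $R$ into the two types (area $\mathfrak{r}^4$ with Euclidean thickness $\epsilon$, and area $\mathfrak{r}^5$ with Euclidean thickness $\sqrt{\epsilon}$), obtains strip volume $\lesssim \epsilon\,\mathfrak{r}^4+\sqrt{\epsilon}\,\mathfrak{r}^5$, divides by $\mathfrak{r}^6$ to get the single-scale bound $\lesssim 2^{-k\alpha}$, and then sums over $k\ge r$ and chooses $S$ exactly as you do. Your presentation of the ratio as $(\mathfrak{q}/\mathfrak{r})^{2\alpha}+(\mathfrak{q}/\mathfrak{r})^{\alpha}$ is in fact a bit cleaner than the paper's intermediate computation, but the content is identical.
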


The key computation to observe to prove this Proposition is the following.  Fix an integer $k\geq r$, and let us estimate the probability that there exists a cube $R\in\mathcal{D}_2$ of parameter $\mathfrak{r}=2^k\mathfrak{q}$  such that $\dist_{\lambda}(Q, \partial R)\le \ell(Q)^{\al}\ell(R)^{1-\al}=\mathfrak{q}^{2\al}\mathfrak{r}^{2(1-\al)}$.  Geometry shows that it is equal to the ratio of the (usual) volume of the narrow strip of $\lambda$-distance $2^{2(1-\alpha)k}\mathfrak{q}^2$ around the boundary of the cube $R$ to the whole volume of the cube with parameter $2^k \mathfrak{q}$.  

The volume of the strip around the boundary is the sum of slabs adjoint to the faces of cube $R$ of parameter $ \mathfrak{r}=2^k \mathfrak{q}$. There are faces of two types: type (1)--it has (Euclidean!) measurements $\mathfrak{r}\times \mathfrak{r}\times \mathfrak{r}^2$, type (2)--it has (Euclidean!) measurements $\mathfrak{r}\times \mathfrak{r}^2\times \mathfrak{r}^2$.  The slabs consist of those points, whose $\lambda$-distance to faces is at most 

$$
\epsilon:=\mathfrak{q}^{2\al}\mathfrak{r}^{2(1-\al)}\,.
$$
 For faces of type (1), the slab with this property has (Euclidean!) thickness at most $C\,\epsilon$. For faces of type (2), the slab will obviously have the (Euclidean!) thickness at most $C\,\epsilon^{1/2}$.  Consequently, this implies the volume of the strip is at most 
$$
C\,(\epsilon \cdot \mathfrak{r}^{2}\cdot \mathfrak{r}\cdot\mathfrak{r}+ \epsilon^{1/2}\cdot \mathfrak{r}\cdot \mathfrak{r}^{2}\cdot \mathfrak{r}^{2})\,.
$$
Plugging  $\mathfrak{r}=2^k\mathfrak{q}$ and $\epsilon= 2^{k(2-2\al)}\mathfrak{q}$ into the above we obtain that the volume of the strip is at most 
$$
C\,(2^{k(2-2\al +4)} + 2^{1-\al +5})\mathfrak{q}^6\,.
$$
Or, it is at most $C\mathfrak{q}^{6} 2^{6k-\alpha k}$ (here $C$ is an absolute constant), while the volume  of the cube with parameter $2^k\mathfrak{q}$ is $2^{6k}\mathfrak{q}^6$.  We conclude that this ratio is less than
$C\,2^{-k\al}$.  Therefore, the probability that the cube $Q$ is bad  does not exceed
$$
C\,\sum_{k=r}  ^\infty 2^{-k\al}=C\frac{2^{-r\al}}{1-2^{-\al}}\,.
$$
Since the integer $r$ was chosen so that $2^{-r}\leq \delta^S<2^{-r+1}$ we will then simply choose the minimal $S=S(\alpha)$ such that $\frac{A\, \delta^{S\al}}{1-2^{-\al}}\leq \delta^2$ (of course, $S=3/\al$ is enough for all small $\delta$'s).  This construction of course works when $d=2n$ with appropriate modifications  to the geometry of the sets. 

\section{Concluding Remarks}
The considerations explored in this paper can be extended to the case of metric spaces.  This was carried out by the authors in \cite{VW2}.  There is also recent related work by Hyt\"onen and Martikainen, \cites{HM1, HM2} where they prove Theorems of a similar nature.  In particular the show ``Tb'' theorems on quasimetric spaces equipped with upper doubling measures.  As an application of their techniques, they are able to obtain the same characterization of Carleson measures for the $B_\sigma^2(\mathbb{B}_{2d})$, though, not our abstract Theorem \ref{md}.

Finally, we remark that it is anticipated that the method developed in this paper, and in addition the tools of non-homogeneous harmonic analysis, should have other immediate applications.

\begin{bibdiv}
\begin{biblist}

\bib{AC}{article}{
   author={Ahern, P.},
   author={Cohn, W.},
   title={Exceptional sets for Hardy Sobolev functions, $p>1$},
   journal={Indiana Univ. Math. J.},
   volume={38},
   date={1989},
   number={2},
   pages={417--453}
}

\bib{ARS}{article}{
   author={Arcozzi, N.},
   author={Rochberg, R.},
   author={Sawyer, E.},
   title={Carleson measures for the Drury-Arveson Hardy space and other
   Besov-Sobolev spaces on complex balls},
   journal={Adv. Math.},
   volume={218},
   date={2008},
   number={4},
   pages={1107--1180}
}

\bib{ARS2}{article}{
   author={Arcozzi, N.},
   author={Rochberg, R.},
   author={Sawyer, E.},
   title={Some problems on Carleson measures for Besov-Sobolev spaces},
   conference={
      title={Topics in complex analysis and operator theory},
   },
   book={
      publisher={Univ. M\'alaga},
      place={M\'alaga},
   },
   date={2007},
   pages={141--148}
}

\bib{B}{article}{
   author={Bekoll{\'e}, D.},
   title={In\'egalit\'e \`a poids pour le projecteur de Bergman dans la
   boule unit\'e de ${\bf C}\sp{n}$},
   language={French},
   journal={Studia Math.},
   volume={71},
   date={1981/82},
   number={3},
   pages={305--323}
}

\bib{CO}{article}{
   author={Cascante, C.},
   author={Ortega, J. M.},
   title={Carleson measures on spaces of Hardy-Sobolev type},
   journal={Canad. J. Math.},
   volume={47},
   date={1995},
   number={6},
   pages={1177--1200}
}

\bib{ENV}{article}{
   author={Eiderman, V.},
   author={Nazarov, F.},
   author={Volberg, A.},
   title={Vector-Valued Riesz Potentials:  Cartan Type Estimates and Related Capacities},
}	
	
\bib{Hastings}{article}{
   author={Hastings, W. W.},
   title={A Carleson measure theorem for Bergman spaces},
   journal={Proc. Amer. Math. Soc.},
   volume={52},
   date={1975},
   pages={237--241}
}

\bib{HM1}{article}{
   author={Hyt\"onen, T.},
   author={Martikainen H.},
   title={Non-homogeneous Tb theorem and random dyadic cubes on metric measure spaces},
   pages={\href{http://arxiv.org/abs/0911.4387}{http://arxiv.org/abs/0911.4387}}
}

\bib{HM2}{article}{
   author={Hyt\"onen, T.},
   author={Martikainen H.},
   title={Vector-valued non-homogeneous Tb theorem on metric measure spaces},
   pages={\href{http://arxiv.org/abs/1004.3176}{http://arxiv.org/abs/1004.3176}}
}

\bib{Lu}{article}{
   author={Luecking, D. H.},
   title={Representation and duality in weighted spaces of analytic
   functions},
   journal={Indiana Univ. Math. J.},
   volume={34},
   date={1985},
   number={2},
   pages={319--336}
}

\bib{MV1}{article}{
   author={Mayboroda, S.},
   author={Volberg, A.},
   title={Boundedness of the square function and rectifiability},
   journal={Comptes Rendus Ac. Sci. Paris, Math\'ematiques},
   volume={347},
   date={2009},
   pages={1051--1056}
}
	
\bib{MV2}{article}{
   author={Mayboroda, S.}
   author={Volberg, A.},
   title={Finite square function implies integer dimension},
   journal={Comptes Rendus Ac. Sci. Paris, Math\'ematiques, to appear},
  }

\bib{NTV1}{article}{
   author={Nazarov, F.},
   author={Treil, S.},
   author={Volberg, A.},
   title={The $Tb$-theorem on non-homogeneous spaces},
   journal={Acta Math.},
   volume={190},
   date={2003},
   number={2},
   pages={151--239}
}

\bib{NTV2}{article}{
   author={Nazarov, F.},
   author={Treil, S.},
   author={Volberg, A.},
   title={Accretive system $Tb$-theorems on nonhomogeneous spaces},
   journal={Duke Math. J.},
   volume={113},
   date={2002},
   number={2},
   pages={259--312}
}

\bib{NTV3}{article}{
   author={Nazarov, F.},
   author={Treil, S.},
   author={Volberg, A.},
   title={Weak type estimates and Cotlar inequalities for Calder\'on-Zygmund
   operators on nonhomogeneous spaces},
   journal={Internat. Math. Res. Notices},
   date={1998},
   number={9},
   pages={463--487}
}
		
\bib{NTV4}{article}{
   author={Nazarov, F.},
   author={Treil, S.},
   author={Volberg, A.},
   title={Cauchy integral and Calder\'on-Zygmund operators on nonhomogeneous
   spaces},
   journal={Internat. Math. Res. Notices},
   date={1997},
   number={15},
   pages={703--726}
}

\bib{TolsaVilla}{article}{
   author={Ruiz de Villa, A., Tolsa, X.},
   title={Non existence of principal values of signed Riesz transforms of non integer dimension},
   journal={Preprint}
}

\bib{T1}{article}{
   author={Tchoundja, E.},
   title={Carleson measures for Hardy-Sobolev spaces},
   journal={Complex Var. Elliptic Equ.},
   volume={53},
   date={2008},
   number={11},
   pages={1033--1046}
}

\bib{T2}{article}{
   author={Tchoundja, E.},
   title={Carleson measures for the generalized Bergman spaces via a
   $T(1)$-type theorem},
   journal={Ark. Mat.},
   volume={46},
   date={2008},
   number={2},
   pages={377--406}
}

\bib{Tolsa}{article}{
   author={Tolsa, X.},
   title={Painlev\'e's problem and the semiadditivity of analytic capacity},
   journal={Acta Math.},
   volume={190},
   date={2003},
   number={1},
   pages={105--149}
}

\bib{Tolsa2}{article}{
   author={Tolsa, X.},
   title={A $T(1)$ theorem for non-doubling measures with atoms},
   journal={Proc. London Math. Soc. (3)},
   volume={82},
   date={2001},
   number={1},
   pages={195--228}
}

\bib{Tolsa1}{article}{
   author={Tolsa, X.},
   title={$L\sp 2$-boundedness of the Cauchy integral operator for
   continuous measures},
   journal={Duke Math. J.},
   volume={98},
   date={1999},
   number={2},
   pages={269--304}
}

\bib{TolsaPV}{article}{
   author={Tolsa, X.},
   title={Principal values of Riesz transforms and rectifiability},
   journal={J. Funct. Anal.},
   volume={254},
   date={2008},
   number={7},
   pages={1811--1863}
}

\bib{Verdera}{article}{
   author={Verdera, J.},
   title={On the $T(1)$-theorem for the Cauchy integral},
   journal={Ark. Mat.},
   volume={38},
   date={2000},
   number={1},
   pages={183--199}
}

\bib{V}{book}{
   author={Volberg, A.},
   title={Calder\'on-Zygmund capacities and operators on nonhomogeneous
   spaces},
   series={CBMS Regional Conference Series in Mathematics},
   volume={100},
   publisher={Published for the Conference Board of the Mathematical
   Sciences, Washington, DC},
   date={2003},
   pages={iv+167},
   isbn={0-8218-3252-2}
}

\bib{VW2}{article}{
   author={Volberg, A.},
   author={Wick, B. D.},
   title={Bergman-type Singular Integral Operators on Metric Spaces},
   pages={\href{http://arxiv.org/abs/1001.0038}{http://arxiv.org/abs/1001.0038}}
}

\bib{Zhu}{book}{
   author={Zhu, K.},
   title={Spaces of holomorphic functions in the unit ball},
   series={Graduate Texts in Mathematics},
   volume={226},
   publisher={Springer-Verlag},
   place={New York},
   date={2005},
   pages={x+271},
   isbn={0-387-22036-4}
}

\end{biblist}
\end{bibdiv}

\end{document}